\tikzstyle{edge}=[shorten <=2pt, shorten >=2pt, >=stealth, line width=0.8pt]
\tikzstyle{arc}=[->, shorten <=2pt, shorten >=2pt, >=stealth, line width=0.8pt]
\tikzstyle{bentE}=[shorten <=1pt, shorten >=1pt, >=stealth, bend right=30,
\tikzstyle{vertex}=[circle, fill=white, draw, minimum size=5pt, inner sep=0pt,
\tikzstyle{blackV}=[circle, fill=black, minimum size=5pt, inner sep=0pt,
\newtheorem{theorem}{Theorem}
\newtheorem{lemma}[theorem]{Lemma}
\newtheorem{corollary}[theorem]{Corollary}
\newtheorem{proposition}[theorem]{Proposition}
\newtheorem{remark}[theorem]{Remark}
\newtheorem{problem}{Problem}
\title{Minimal obstructions for polarity, monopolarity, unipolarity and
$(s,1)$-polarity in generalizations of cographs%
\thanks{The authors gratefully acknowledge support from grants SEP-CONACYT
A1-S-8397, DGAPA-PAPIIT IA104521, and CONACYT FORDECYT-PRONACES/39570/2020}}
\author[1]{Fernando Esteban Contreras-Mendoza\thanks{
	esteban.contreras@ciencias.unam.mx}}
\author[1]{C\'esar Hern\'andez-Cruz\thanks{chc@ciencias.unam.mx}}
\affil[1]{Facultad de Ciencias\\
Universidad Nacional Aut\'onoma de M\'exico\\
Av. Universidad 3000, Circuito Exterior S/N\\
C.P. 04510, Ciudad Universitaria, CDMX, M\'exico}
\begin{document}
\date{}

\maketitle
\begin{abstract}
  It is known that every hereditary property can be characterized by finitely
  many minimal obstructions when restricted to either the class of cographs or
  the class of $P_4$-reducible graphs.   In this work, we prove that also when
  restricted to the classes of $P_4$-sparse graphs and $P_4$-extendible graphs
  (both of which extend $P_4$-reducible graphs) every hereditary property can be
  characterized by finitely many minimal obstructions.

  We present complete lists of $P_4$-sparse and $P_4$-extendible minimal
  obstructions for polarity, monopolarity, unipolarity, and $(s,1)$-polarity,
  where $s$ is a positive integer.   In parallel to the case of $P_4$-reducible
  graphs, all the $P_4$-sparse minimal obstructions for these hereditary
  properties are cographs.
\end{abstract}

\section{Introduction}

All graphs in this paper are simple and finite; we refer the reader to
\cite{bondy2008} for basic terminology and notation not explicitly defined. For
a graph $G$, we denote its vertex set by $V_G$, and its complement by
$\overline{G}$. As usual, we use $N_G(v)$ to denote the neighborhood of the
vertex $v$ in $G$, and $d_G(v)$ to denote the degree of $v$ in $G$, i.e. $d_G(v)
= |N_v(G)|$. If no confusion is possible we write $N(v)$ and $d(v)$ instead of
$N_G(v)$ and $d_G(v)$, respectively. For graphs $G$ and $H$, we denote by $G +
H$ the disjoint union of $G$ and $H$, and by $G \oplus H$ the join of $G$ and
$H$, i.e., the graph $\overline{\overline{G} + \overline{H}}$. Naturally, the
disjoint union of $n$ copies of a graph $G$ is denoted by $nG$. Two subsets
$V_1$ and $V_2$ of $V_G$ are said to be {\em completely adjacent} if every
vertex of $V_1$ is adjacent to any vertex of $V_2$. Analogously, $V_1$ is {\em
completely nonadjacent} to $V_2$ if no vertex in $V_1$ is adjacent to a vertex
in $V_2$.

If $G$ and $H$ are graphs, we write $H \le G$ to denote that $H$ is an induced
subgraph of $G$.  We say that $G$ is an {\em $H$-free graph} if $G$ does not
have induced subgraphs isomorphic to $H$. Given a family of graphs
$\mathcal{H}$, we say that $G$ is {\em $\mathcal{H}$-free} if it is $H$-free for
every $H \in \mathcal{H}$. A {\em $k$-cluster} is the complement of a complete
$k$-partite graph, and a {\em cluster} is a $k$-cluster for some integer $k$.
Clusters are characterized as $P_3$-free graphs, while $k$-clusters are
precisely the $\{P_3, (k+1)K_1\}$-free graphs.  Complementarily, complete
multipartite graphs are precisely $\overline{P_3}$-free graphs, and complete
$s$-partite graphs are precisely $\{\overline{P_3}, K_{s+1}\}$-free graphs.

A property $\mathcal{P}$ of graphs is said to be {\em hereditary} if it is
closed under taking induced subgraphs. Given a hereditary property of graphs
$\mathcal P$, a {\em $\mathcal{P}$-obstruction} is a graph $G$ that does not
have the property $\mathcal{P}$; if in addition $G$ is such that any proper
induced subgraph of $G$ has the property $\mathcal P$, then $G$ is said to be a
{\em minimal $\mathcal{P}$-obstruction}.

For nonnegative integers $s$ and $k$, an {\em $(s, k)$-polar partition} of a
graph $G$ is a partition $(A, B)$ of $V$ such that $G[A]$ is a complete
multipartite graph with at most $s$ parts and $G[B]$ is a $k$-cluster. A graph
$G$ is said to be {\em $(s, k)$-polar} if $V$ admits an $(s, k)$-polar
partition.   When we replace $s$ or $k$ with $\infty$, it means that the number
of parts of $G[A]$ or $G[B]$, respectively, is unbounded. A graph is said to be
{\em monopolar} or {\em polar} if it is a $(1, \infty)$- or an $(\infty,
\infty)$-polar graph, respectively. A {\em unipolar partition} of a graph $G$ is
a polar partition $(A, B)$ of $G$ such that $A$ is a clique.  Naturally, a graph
is said to be {\em unipolar} if it admits a unipolar partition. Unipolar and
monopolar graphs are particularly interesting because many recognition
algorithms for polar graphs on specific graph classes first check whether the
input graph is either unipolar or monopolar.

In the much larger context of matrix partitions, it was shown that for any pair
of fixed nonnegative integers, $s$ and $k$, there are only finitely many minimal
$(s,k)$-polar obstructions \cite{federENDM28}, and therefore the class of
$(s,k)$-polar graphs can be recognized by a brute force algorithm in polynomial
time. Also, unipolar graphs have been shown to be efficiently recognizable
\cite{churchleyGC30,eschenDAM162}. In contrast, the problems of deciding whether
a graph is polar and deciding whether a graph is monopolar have been shown to be
NP-complete \cite{chernyakDM62,farrugiaTEJC11} even when restricted to
triangle-free planar graphs \cite{leISAC}.   Such results encouraged the study
of polarity and monopolarity in many graph classes as cographs
\cite{ekimDAM156b}, chordal graphs \cite{ekimDAM156a,stachoDD}, permutation
graphs \cite{ekimIWCA,ekimEJC34}, trivially perfect graphs
\cite{talmaciuIJCCC5}, line graphs \cite{churchleySIAMJDM25}, triangle-free and
claw-free graphs \cite{churchleyDM312,churchleyJGT76}, comparability graphs
\cite{churchleyGC30}, planar graphs \cite{leTCS528,leISAC}, and line-graphs of
bipartite graphs \cite{ekimDAM158}, just to mention some of the most outstanding
classes.

Cographs were introduced in \cite{corneilDAM3}, where it was proved that such
graphs are precisely the $P_4$-free graphs, and also the graphs that can be
obtained from trivial graphs by disjoin union and join operations. Thus, for any
nontrivial cograph $G$, either $G$ or $\overline G$ is disconnected.  A rooted
labeled tree uniquely representing $G$ (the {\em cotree} of $G$) can be
constructed in $O(|V|+|E|)$-time using LexBFS \cite{bretscherSIAMJDM22}.  It
follows from the uniqueness of the cotree representation of a cograph that many
algorithmic problems which are difficult for general graphs can be efficiently
solved on cographs by using its cotree \cite{corneilDAM3}. Additionally,
cographs inherit efficient algorithms for some superclasses they belong to, like
distance hereditary, permutation, and comparability graphs.

Cographs possess many desirable structural properties, and they are also
particularly interesting since real-life applications quite often involve graph
models where paths of length four are unlikely to appear \cite{corneilSIAMJC14}.
For the above reasons, the study of cographs was naturally followed by the
introduction of a wide variety of cograph superclasses having both few induced
$P_4$'s and a unique tree representation.  For instance, a graph is said to be a
{\em $P_4$-sparse graph} if any set of five vertices induces at most one $P_4$,
and a {\em $P_4$-extendible graph} is a graph such that, for any vertex subset
$W$ inducing a $P_4$, there exists at most one vertex $v \notin W$ which belongs
to a $P_4$ sharing vertices with $W$.

Ekim, Mahadev and de Werra found the complete list of cograph minimal polar
obstructions as well as the exact list of cograph minimal $(s, k)$-polar
obstructions when $\min\{s,k\} = 1$ \cite{ekimDAM156b}. In the past few years,
the study of $(s,k)$-polarity in cographs has continued with the following main
results.   In \cite{hellDAM261}, Hell, Hern\'andez-Cruz and Linhares-Sales
provided a full characterization of cograph minimal $(2,2)$-polar obstructions.
Bravo, Nogueira, Protti and Vianna exhibited the exhaustive list of cograph
minimal $(2,1)$-polar obstructions \cite{bravoAETCI}, and Contreras-Mendoza and
Hern\'andez-Cruz proved a simple recursive characterization for all the cograph
minimal $(s, 1)$-polar obstructions for any arbitrary integer $s$, as well as
the complete list of cograph minimal monopolar obstructions
\cite{contrerasDAM281}. The authors of the present work provided in
\cite{contrerasDM7} complete lists of cograph minimal $(\infty, k)$-polar
obstructions for $k = 2$ and $k = 3$, as well as a partial recursive
characterization for arbitrary values of $k$.

In this paper we study $(s, k)$-polarity on two cograph superclasses, namely
$P_4$-sparse and $P_4$-extendible graphs. We give complete lists of minimal $(s,
1)$-polar obstructions, minimal unipolar obstructions, minimal monopolar
obstructions, and minimal polar obstructions on such cograph superclasses.
Additionally, we prove that any hereditary property has only finitely many
minimal obstructions in the classes of $P_4$-sparse and $P_4$-extendible graphs.
A summary of our main results is included in \Cref{sec:conclusions}.

The rest of the paper is organized as follows. In \Cref{sec:cogGen}, we review
some families of graphs generalizing cographs, emphasizing $P_4$-sparse and
$P_4$-extendible graphs by their relevance in this paper. \Cref{sec:wqo} is
devoted to prove that any hereditary property has finitely many minimal
obstructions when restricted to $P_4$-sparse or $P_4$-extendible graphs.
Complete lists of $P_4$-sparse and $P_4$-extendible minimal unipolar
obstructions are given in \Cref{sec:MinUnipObs}. In
\Cref{sec:discM(s1)PO,sec:connectedM(s1)PO} we provide complete lists of
disconnected minimal $(s,1)$- and $(\infty, 1)$-polar obstructions for general
graphs, as well as technical results we need to characterize connected
$P_4$-sparse and $P_4$-extendible minimal $(s,1)$- and $(\infty, 1)$-polar
obstructions. Finally, in \Cref{sec:main} we prove our main results about
polarity on cograph generalizations: we give complete characterizations for
$P_4$-sparse and $P_4$-extendible minimal $(s,1)$-, $(\infty,1)$-, and $(\infty,
\infty)$-polar obstructions. Conclusions, work in progress, and some open
problems and conjectures are presented in \Cref{sec:conclusions}.


\section{Two families generalizing cographs}
\label{sec:cogGen}

In \cite{corneilDAM3}, eight characterizations of cographs were presented; the
most relevant for this work are summarized in the following proposition.

\begin{theorem}[\cite{corneilDAM3}]
\label{theo:cographs}
Let $G$ be a graph. The following statements are equivalent.
\begin{enumerate}
  \item $G$ can be constructed from trivial graphs by means of disjoint union
    and complement operations (it is a cograph).

  \item $G$ can be constructed from trivial graphs by means of join and disjoint
    union operations.

  \item $G$ is a $P_4$-free graph.

  \item For any nontrivial induced subgraph $H$ of $G$, either $H$ or
    $\overline{H}$ is disconnected.
\end{enumerate}
\end{theorem}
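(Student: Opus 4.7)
The plan is to establish the four statements as equivalent through the cycle $(1) \Leftrightarrow (2) \Rightarrow (3) \Rightarrow (4) \Rightarrow (1)$. The equivalence $(1) \Leftrightarrow (2)$ is essentially tautological: since $G \oplus H = \overline{\overline{G} + \overline{H}}$, the join operation is expressible using disjoint union and complementation, and conversely, a routine induction on the structure of a $\{+, \oplus\}$-expression shows that taking complements throughout swaps every $+$ with $\oplus$, producing an equivalent construction that uses only $+$ and complementation. The implication $(2) \Rightarrow (3)$ follows by induction on $|V_G|$: since $P_4$ is connected and its complement is again a $P_4$ (hence also connected), an induced $P_4$ appearing in $G_1 + G_2$ or $G_1 \oplus G_2$ would have to lie entirely inside one factor, contradicting the inductive hypothesis.

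The main obstacle is $(3) \Rightarrow (4)$, which is essentially the classical theorem of Seinsche. I would argue by contraposition: assuming $G$ is nontrivial with both $G$ and $\overline G$ connected, I will exhibit an induced $P_4$. The key preliminary observation is that any connected graph of diameter at least three contains an induced $P_4$, obtained as a shortest path of length three between a pair of vertices at maximum distance. It therefore suffices to rule out the case where both $G$ and $\overline G$ have diameter at most two. Choosing an edge $uv$ of $G$, the connectedness of $\overline G$ furnishes a vertex $w$ nonadjacent to $u$ in $G$, and then a careful analysis of the distance-two neighborhoods in both $G$ and $\overline G$ produces a fourth vertex completing an induced $P_4$. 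The subtle part is organizing the adjacencies so that a $P_4$ actually appears as an induced subgraph; an equally viable alternative is induction on $|V_G|$, removing a vertex $v$ and examining how $N(v)$ interacts with the partition of $G - v$ provided by the inductive hypothesis.

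Finally, $(4) \Rightarrow (1)$ follows by a short induction on $|V_G|$: if $G$ is disconnected, write $G$ as the disjoint union of its components, each of which inherits property (4) and is therefore a cograph by the inductive hypothesis; if $\overline G$ is disconnected, apply the same reasoning to $\overline G$ and take one further complement to recover $G$. The base case of a trivial graph is immediate, closing the cycle and completing the proof.
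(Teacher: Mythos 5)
The paper does not actually prove this theorem; it is quoted from Corneil, Lerchs and Stewart Burlingham \cite{corneilDAM3}, so there is no internal proof to compare against. Judged on its own terms, your cycle of implications is organized sensibly, and three of the four steps are fine: $(1)\Leftrightarrow(2)$ via $G\oplus H=\overline{\overline G+\overline H}$ and closure of the $\{+,\oplus\}$-constructible class under complementation (your wording of the $(1)\Rightarrow(2)$ direction is slightly garbled, but the idea is right), $(2)\Rightarrow(3)$ because $P_4$ is connected and self-complementary so it cannot straddle the two factors of a disjoint union or a join, and $(4)\Rightarrow(1)$ by the induction you describe, using that condition (4) is inherited by components and is invariant under complementation.

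The genuine gap is $(3)\Rightarrow(4)$, which is the entire substance of the theorem (Seinsche's theorem). Your contrapositive reduction via diameter is correct as far as it goes: a shortest path of length three is induced, so you may assume both $G$ and $\overline G$ are connected of diameter at most two. But at exactly that point you write that ``a careful analysis of the distance-two neighborhoods in both $G$ and $\overline G$ produces a fourth vertex completing an induced $P_4$,'' and you concede that organizing the adjacencies is ``the subtle part.'' That analysis is the whole proof; picking an edge $uv$ and a non-neighbor $w$ of $u$ gives you a $P_3$ (or a $P_3$ in the complement), but extending it to an \emph{induced} $P_4$ requires a genuine argument about how the remaining vertices attach to this path, and nothing you have written rules out the bad configurations (e.g.\ every candidate fourth vertex being adjacent to both ends). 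The alternative you mention --- induction on $|V_G|$, deleting a vertex $v$ and analyzing how $N(v)$ meets the components of $G-v$ or of $\overline{G-v}$ --- is the standard route and does work, but you only name it; you do not carry out the case analysis showing that a vertex partially attached to a component of $G-v$ (or of its complement) forces an induced $P_4$. As it stands, the proposal proves the easy three implications and leaves the hard one as an assertion, so it is not yet a proof of the theorem.
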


Clearly, the family of cographs is an auto-complementary and hereditary class of
graphs. Moreover, it follows from the structural characterizations of cographs
that they can be uniquely represented by a rooted labeled tree, its cotree
\cite{corneilDAM3}. Cographs can be recognized and their cotree can be
constructed in linear time by a certifying LexBFS algorithm
\cite{bretscherSIAMJDM22}. In addition, many algorithmic problems which are
difficult on general graphs can be efficiently solved on cographs using
bottom-up algorithms on their cotrees.

Some real life applications involve graph models which are unlikely to have more
than some few induced paths on four vertices \cite{corneilSIAMJC14}. From this
point of view, cographs are the most restrictive class ($P_4$-free), so a
natural question is whether some cograph superclass with weaker restrictions on
the amount of induced $P_4$'s has similar properties, i.e., allows us to develop
efficient algorithms for problems which are difficult in general graphs. Below,
we introduce some graph classes with few induced paths of length three, which
have the property of having a constructive characterization from simple
primitive graphs and using simple graph operations. Such characterizations imply
that these graph classes can be recognized in linear time and a tree
representation (similar to the cotree) can be efficiently computed. Before
introducing such families, we give some necessary definitions.

A $(1,1)$-polar partition of a graph $G$ is commonly called a {\em split
partition} of $G$. The graphs admitting a split partition are the {\em split
graphs} and they are characterized as the $\{2K_2, C_4, C_5\}$-free graphs
\cite{foldesGTC1977}. The $\{2K_2, C_4\}$-free graphs are known as {\em
pseudo-split graphs}. A graph $G$ of order at least four is said to be a {\em
headless spider} if there exist a split partition $(S, K)$ of $V$ and a
bijection $f \colon S \to K$ such that either $N(s) = \{f(s)\}$ for any $s \in
S$, or $N(s) = K \setminus \{f(s)\}$ for every $s \in S$. Given a graph $G$ and
an induced path $P$ of length three, a {\em partner} of $P$ is a vertex $v$ of
$G$ such that $V_P \cup \{v\}$ induces some of $C_5, P_5, P, F$ or its
complements (see \Cref{fig:extSets}). Now, we introduce some cograph
generalizations.

A graph $G$ is said to be:
\begin{enumerate}
  \item {\em $P_4$-reducible} if any vertex belongs to at most one induced
    $P_4$.

  \item A {\em $(q,t)$-graph} if no set of at most $q$ vertices induces more
    than $t$ distinct $P_4$'s. The $(5,1)$-graphs are called {\em $P_4$-sparse
    graphs}.

  \item {\em Extended $P_4$-reducible} if both, $G$ and $\overline{G}$, are
    $\{P_5, F, P, \textnormal{net}\}$-free graphs (see \Cref{fig:net}).

  \item {\em Extended $P_4$-sparse} if both, $G$ and $\overline{G}$, are
    $\{P_5, F, P\}$-free graphs.

  \item {\em $P_4$-lite} if every induced subgraph of order at most six is
    either isomorphic to a headless spider, or it contains at most two induced
    $P_4$'s.

  \item {\em $P_4$-extendible} if for any vertex subset $W$ inducing a $P_4$,
    there exists at most one vertex $v \notin W$ which belongs to a $P_4$
    sharing vertices with $W$.

  \item {\em $P_4$-tidy} if any induced $P_4$ has at most one partner.

  \item {\em $P_4$-laden} if any induced subgraph of $G$ of order at most six
    either is a split graph or it contains at most two induced $P_4$'s.

  \item {\em Extended $P_4$-laden} if any induced subgraph of $G$ of order at
    most six either is a pseudo-split graph or it contains at most two induced
    $P_4$'s.
\end{enumerate}

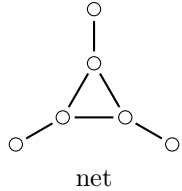
\begin{figure}[ht!]
\begin{center}
\begin{tikzpicture}

\begin{scope}[scale=0.6]
  \foreach \i in {0,1,2}
    \foreach \j in {1,2}
      \node [vertex] (\i\j) at ({90+(\i*120)}:{(\j*1.2)-0.4})[]{};

  \foreach \i/\j in {01/11,11/21,21/01,01/02,11/12,21/22}
    \draw [edge] (\i) to (\j);

  \node (n) at (0,-1.75){net};
\end{scope}

\end{tikzpicture}
\end{center}
\caption{The net graph.}
\label{fig:net}
\end{figure}

It is worth emphasizing that we are not really interested in general
$(q,t)$-graphs, but in $(q,q-4)$-graphs. This is due to, for any fixed $q$, $(q,
q-4)$-graphs are known to have simple enough tree representations, while it
remains unknown whether $(q,t)$-graphs have such representations for arbitrary
values of $q$ and $t$.

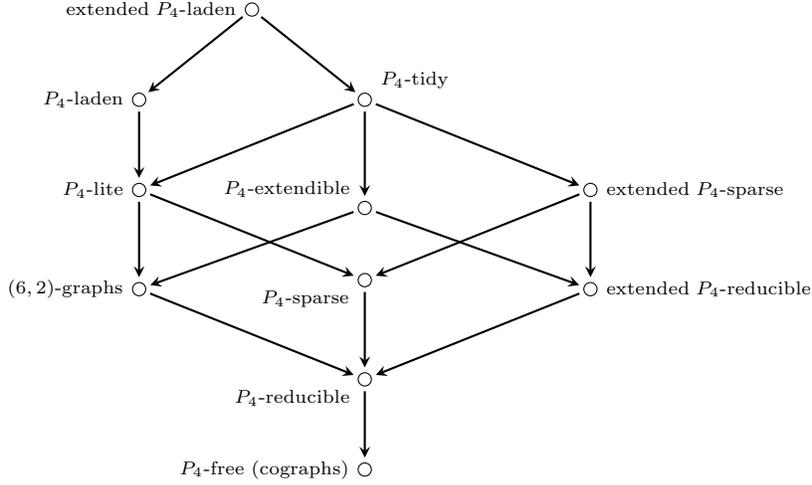
\begin{figure}[ht!]
\begin{center}
\begin{tikzpicture}

\begin{scope}[xscale= 1.5, yscale=1.2]
  \node [vertex] (exP4la) at (-1,5.1)[label=180:\scriptsize{extended
                                                            $P_4$-laden}]{};
  \node [vertex] (P4la) at (-2,4.1)[label=180:\scriptsize{$P_4$-laden}]{};
  \node [vertex] (P4ti) at (0,4.1)[label=15:\scriptsize{$P_4$-tidy}]{};
  \node [vertex] (P4ex) at (0,2.9)[label=165:\scriptsize{$P_4$-extendible}]{};
  \node [vertex] (exP4sp) at (2,3.1)[label=0:\scriptsize{extended
                                                          $P_4$-sparse}]{};
  \node [vertex] (P4li) at (-2,3.1)[label=180:\scriptsize{$P_4$-lite}]{};
  \node [vertex] (62) at (-2,2)[label=180:\scriptsize{$(6, 2)$-graphs}]{};
  \node [vertex] (P4sp) at (0,2.1)[label=195:\scriptsize{$P_4$-sparse}]{};
  \node [vertex] (exP4re) at (2,2)[label=0:\scriptsize{extended
                                                        $P_4$-reducible}]{};
  \node [vertex] (P4re) at (0,1)[label=195:\scriptsize{$P_4$-reducible}]{};
  \node [vertex] (cog) at (0,0)[label=180:\scriptsize{$P_4$-free (cographs)}]{};

  \foreach \i/\j in {exP4la/P4la, exP4la/P4ti, P4la/P4li, P4ti/P4ex,
                      P4ti/exP4sp, P4ti/P4li, P4ex/exP4re, P4ex/62, exP4sp/P4sp,
                      exP4sp/exP4re, P4li/62, P4li/P4sp, 62/P4re, P4sp/P4re,
                      exP4re/P4re, P4re/cog}
    \draw [arc] (\i) to (\j);
\end{scope}

\end{tikzpicture}
\end{center}
\caption{Relations between some graph classes with just a few induced $P_4$'s.}
\label{fig:relations}
\end{figure}

Giakoumakis and Vanherpe \cite{giakoumakisTCS180} observed that $P_4$-reducible
graphs are the graphs which are both $P_4$-sparse and $P_4$-extendible graphs.
Some other relations between the graph classes introduced above can be
established from their definitions or using diverse characterizations for them.
We represent the containment relationships between these classes in
\Cref{fig:relations}, where an arc from a class $\mathcal G$ to a class
$\mathcal H$ means that $\mathcal H \subseteq \mathcal G$. We remark that any
graph class represented in \Cref{fig:relations} can be recognized, and a tree
representation can be obtained, in polynomial time.   Moreover, in most cases
this can be done in linear time.

In the following sections we state constructive characterizations for
$P_4$-sparse and $P_4$-extendible graphs which will be the cornerstones for most
of the work in this paper.


\subsection{$P_4$-sparse graphs}
\label{sec:P4sparse}

A graph is defined to be $P_4$-sparse if any vertex subset with at most five
vertices induces at most one $P_4$. It follows directly from this definition
that a graph $G$ is a $P_4$-sparse graph if and only if both, $G$ and
$\overline{G}$, are $\{C_5, P_5, P, F\}$-free graphs (see \Cref{fig:extSets}).
Additionally, Jamison and Olariu gave a connectedness characterization of
$P_4$-sparse graphs based on spiders which we now introduce.

A graph $G$ is a {\em spider} if its vertex set can be partitioned into into $S,
K$ and $R$ in such a way that $G[S \cup K]$ is a headless spider with partition
$(S, K)$, $R$ is completely adjacent to $K$ and completely nonadjacent to $S$.
For a spider $G = (S, K, R)$ we say that $S$ is its {\em legs set}, $K$ is its
{\em body}, and $R$ is its {\em head}. A spider is called {\em thin}
(respectively {\em thick}) if $d(s) = 1$ (respectively $d(s) = |K|-1$) for any
$s \in S$. Notice that the complement of a thin spider is a thick spider, and
vice versa, and that a headless spider is precisely a spider with an empty head.

\begin{theorem}[\cite{jamisonDAM35}]
\label{thm:connCharSparse}
Let $G$ be a graph. Then, $G$ is a $P_4$-sparse graph if and only if for every
nontrivial induced subgraph $H$ of $G$, exactly one of the following statements
is satisfied
\begin{enumerate}
	\item $H$ is disconnected.

	\item $\overline{H}$ is disconnected.

	\item $H$ is a spider.
\end{enumerate}
\end{theorem}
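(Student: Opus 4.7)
The plan is to prove the equivalence by analyzing, for each nontrivial induced subgraph $H$, the structure forced by $P_4$-sparseness. Mutual exclusivity of the three alternatives is immediate: a graph and its complement cannot both be disconnected, and every spider (whose body $K$ has at least two vertices) is readily seen to be both connected and co-connected, so no two conditions can hold simultaneously. The real content is that at least one of them always does.

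For the forward direction, assume $G$ is $P_4$-sparse and let $H\le G$ be nontrivial. Then $H$ is $P_4$-sparse; if $H$ or $\overline H$ is disconnected we are done, so suppose both are connected. A connected, co-connected cograph is trivial, so $H$ contains an induced path $P=abcd$. A routine five-vertex case analysis shows that every $v\notin V_P$ must satisfy $N(v)\cap V_P\in\{\emptyset,\{b,c\},\{a,b,c,d\}\}$, because every other pattern places a second induced $P_4$ inside $\{v,a,b,c,d\}$. The main obstacle is turning this trichotomy into a spider decomposition $(S,K,R)$: by considering pairs of vertices outside $V_P$, and — when needed — by substituting alternative $P_4$'s of $H$ to extract further restrictions, one regroups the three classes so that $K$ is a clique containing $\{b,c\}$, $S$ is an independent set containing $\{a,d\}$, $R$ is completely adjacent to $K$ and completely nonadjacent to $S$, and the bipartite graph between $S$ and $K$ is either a perfect matching (thin case) or its bipartite complement (thick case). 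The crux is verifying that the apparent head class (those outside $V_P$ adjacent to exactly $\{b,c\}$) genuinely splits into additional body vertices versus true head vertices, and that the thin/thick dichotomy chosen by the first additional leg propagates consistently to all remaining legs.

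For the reverse direction we argue the contrapositive. If $G$ is not $P_4$-sparse, some five-vertex set $W$ induces at least two $P_4$'s, so by the forbidden-subgraph characterization recalled in the paper $G[W]$ is one of $C_5,P_5,P,F$ or a complement thereof. A direct check shows that each such graph is connected, has a connected complement, and is not a spider: the only spider on five vertices has $|S|=|K|=2$ and $|R|=1$, and a simple degree argument distinguishes this single graph from every member of the list. Hence $G[W]$ is a nontrivial induced subgraph of $G$ satisfying none of the three alternatives, contradicting the hypothesis.
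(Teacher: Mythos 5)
The paper does not prove this theorem; it is imported verbatim from Jamison and Olariu \cite{jamisonDAM35}, so your argument has to be judged as a self-contained proof. Much of it is fine: mutual exclusivity holds because a spider and its complement are both connected (the complement of a thin spider is a thick spider with the roles of $S$ and $K$ exchanged); the reverse direction is correct, since a five-vertex graph with two induced $P_4$'s is one of $C_5, P_5, P, F$ or a complement, and each of these is connected, co-connected, and distinct from the bull, the unique spider on five vertices; and in the forward direction the trichotomy $N(v)\cap V_P\in\{\varnothing,\{b,c\},\{a,b,c,d\}\}$ for $v\notin V_P$ is exactly what the five-vertex check gives.

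The genuine gap is that the forward direction stops precisely where the content of the theorem begins. The passage from the trichotomy to a spider partition is only announced (``one regroups the three classes\dots'', ``the crux is verifying\dots'', ``propagates consistently'') and never carried out. What is actually needed, and requires further $P_4$-sparseness arguments on five-vertex sets containing two vertices outside $V_P$ (or an induction on the order of $H$), includes at least: that the $\{a,b,c,d\}$-class is a clique and the $\varnothing$-class an independent set; that these two classes cannot both be nonempty, which is what forces the thin/thick dichotomy; that every eventual leg is matched to a unique body vertex in the matching (respectively co-matching) pattern; and that the surviving $\{b,c\}$-vertices forming the head are complete to the entire body and anticomplete to every leg. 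Note also that your description of the required split is off in the thick case: there the $\{b,c\}$-class decomposes into additional \emph{legs} plus head (the additional body vertices lie in the $\{a,b,c,d\}$-class), and only in the thin case does it decompose into additional body vertices plus head. These verifications are the substance of the Jamison--Olariu structure theorem; as written, you have proved the easy direction and the exclusivity claim, but only outlined the hard implication.
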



\subsection{$P_4$-extendible graphs}

Let $G$ be a graph, and let $W$ be a proper subset of its vertex set. Denote by
$S(W)$ the set of vertices $x \in V - W$ such that $x$ belongs to a $P_4$
sharing vertices with $W$. In case $S(W)$ contains at most one vertex, we will
say that $W$ has a {\em proper extension}. A set $D$ of vertices is said to be
an {\em extension set} if $D = W \cup S(W)$ for a set $W$ which induces a $P_4$
and has a proper extension. An extension set $D$ is said to be {\em separable}
if no vertex of $D$ is both an endpoint of some $P_4$ in $G[D]$ and a midpoint
of some $P_4$.

In the above terms, $P_4$-extendible graphs are the graphs such that every set
inducing a $P_4$ has a proper extension. As Jamison and Olariu noticed in
\cite{jamisonDAM34}, any extension set must induce one of the eight graphs
depicted in \Cref{fig:extSets}, namely $P_4, C_5, P_5, P, F$ or their
complements. We call these graphs {\em extension graphs}. In addition, separable
extension sets must induce one of $P_4,P,F$ or their complements. These graphs
are called {\em separable extension graphs}.

\begin{figure}[ht!]
\centering
\begin{tikzpicture}
\begin{scope}[scale=0.85]

\begin{scope}[xscale=0.8] 
\foreach \i in {0,3}
	\node [vertex] (\i) at (\i,0)[]{};
\foreach \i in {1,2}
	\node [blackV] (\i) at (\i,0)[]{};

\foreach \i in {0,1,2}
	\draw let \n1={int(\i+1)} in [edge]
		(\i) to node [above] {} (\n1);
\node [rectangle] (n) at (1.5,-0.8){$P_4$};
\end{scope}

\begin{scope}[xshift=4cm, xscale=0.8] 
\foreach \i in {0,1,2,3}
	\node [vertex] (\i) at (\i,0)[]{};
\node [vertex] (4) at (1.5,1)[]{};

\foreach \i in {0,1,2}
	\draw let \n1={int(\i+1)} in [edge]
		(\i) to node [above] {} (\n1);

\foreach \i/\j in {4/0,4/3}
	\draw [edge] (\i) to node [above] {} (\j);
\node [rectangle] (n) at (1.5,-0.8){$C_5$};
\end{scope}

\begin{scope}[xshift=8cm, xscale=0.8] 
\foreach \i in {0,1,2,3}
	\node [vertex] (\i) at (\i,0)[]{};
\node [vertex] (4) at (0,1)[]{};

\foreach \i in {0,1,2}
	\draw let \n1={int(\i+1)} in [edge]
		(\i) to node [above] {} (\n1);

\foreach \i/\j in {4/0}
	\draw [edge] (\i) to node [above] {} (\j);
\node [rectangle] (n) at (1.5,-0.8){$P_5$};
\end{scope}

\begin{scope}[xshift=12cm, xscale=0.8] 
\foreach \i in {0,1,2,3}
	\node [vertex] (\i) at (\i,0)[]{};
\node [vertex] (4) at (2,1)[]{};

\foreach \i in {0,1,2}
	\draw let \n1={int(\i+1)} in [edge]
		(\i) to node [above] {} (\n1);

\foreach \i/\j in {4/0,4/2,4/3}
	\draw [edge] (\i) to node [above] {} (\j);
\node [rectangle] (n) at (1.5,-0.8){$\overline{P_5}$ (house)};
\end{scope}

\begin{scope}[yshift=-3cm, xscale=0.8] 
\foreach \i in {0,3}
	\node [vertex] (\i) at (\i,0)[]{};
\foreach \i in {1,2}
	\node [blackV] (\i) at (\i,0)[]{};
\node [blackV] (4) at (1,1)[]{};

\foreach \i in {0,1,2}
	\draw let \n1={int(\i+1)} in [edge]
		(\i) to node [above] {} (\n1);

\foreach \i/\j in {4/0,4/2}
	\draw [edge] (\i) to node [above] {} (\j);
\node [rectangle] (n) at (1.5,-0.8){$P$ (banner)};
\end{scope}

\begin{scope}[yshift=-3cm, xshift=4cm, xscale=0.8] 
\foreach \i in {0,3}
	\node [vertex] (\i) at (\i,0)[]{};
\foreach \i in {1,2}
	\node [blackV] (\i) at (\i,0)[]{};
\node [vertex] (4) at (0.5,1)[]{};

\foreach \i in {0,1,2}
	\draw let \n1={int(\i+1)} in [edge]
		(\i) to node [above] {} (\n1);

\foreach \i/\j in {4/0,4/1}
	\draw [edge] (\i) to node [above] {} (\j);

\node [rectangle] (n) at (1.5,-0.8){$\overline{P}$};
\end{scope}

\begin{scope}[yshift=-3cm, xshift=8cm, xscale=0.8] 
\foreach \i in {0,3}
	\node [vertex] (\i) at (\i,0)[]{};
\foreach \i in {1,2}
	\node [blackV] (\i) at (\i,0)[]{};
\node [vertex] (4) at (1,1)[]{};

\foreach \i in {0,1,2}
	\draw let \n1={int(\i+1)} in [edge]
		(\i) to node [above] {} (\n1);

\foreach \i/\j in {4/1}
	\draw [edge] (\i) to node [above] {} (\j);
\node [rectangle] (n) at (1.5,-0.8){$F$ (fork, chair)};
\end{scope}

\begin{scope}[yshift=-3cm, xshift=12cm, xscale=0.8] 
\foreach \i in {0,3}
	\node [vertex] (\i) at (\i,0)[]{};
\foreach \i in {1,2}
	\node [blackV] (\i) at (\i,0)[]{};
\node [blackV] (4) at (1,1)[]{};

\foreach \i in {0,1,2}
	\draw let \n1={int(\i+1)} in [edge]
		(\i) to node [above] {} (\n1);

\foreach \i/\j in {4/0,4/1,4/2}
	\draw [edge] (\i) to node [above] {} (\j);
\node [rectangle] (n) at (1.5,-0.8){$\overline{F}$ (kite)};
\end{scope}

\end{scope}
\end{tikzpicture}
\caption{The eight extension graphs. Black vertices are the midpoints of
  separable extension graphs.}
\label{fig:extSets}
\end{figure}
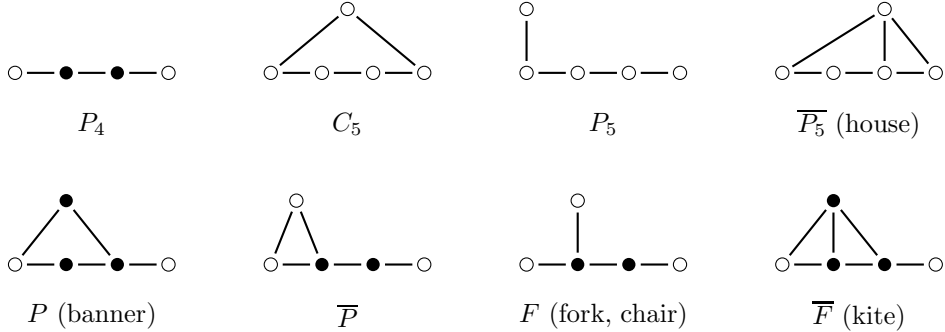

Let $G$ be a separable extension graph. We define a {\em $G$-spider} as an
(induced) supergraph $H$ of $G$ such that $V_H - V_G$ (denoted $R$) is
completely adjacent to the midpoints set of $G$ (denoted $K$), and $V_H - V_G$
is completely nonadjacent to the endpoints set of $G$ (denoted $S$). If $H$ is a
$G$-spider, we say that $(S, K, R)$ is a {\em $G$-spider partition} of $H$, and
we refer to $K$, $S$ and $R$ as the {\em body}, the {\em legs set}, and the {\em
head} of $H$, respectively. Along this text we will say that a given graph is an
$S$-spider if it is a $G$-spider for some arbitrary separable extension graph
$G$.

Observe that every extension graph is trivially a $P_4$-extendible graph but the
headless spiders on six vertices are examples of minimal $P_4$-extendible
obstructions. Thus, since any headless spider is a $P_4$-sparse graph and all
the forbidden $P_4$-sparse graphs are $P_4$-extendible, the classes of
$P_4$-sparse graphs and $P_4$-extendible graphs are incomparable.

Jamison and Olariu gave the following connectedness characterization for the
class of $P_4$-extendible graphs in \cite{jamisonDAM34}.

\begin{theorem}[\cite{jamisonDAM34}]
\label{thm:connChar}
If $G$ is a graph, then $G$ is a $P_4$-extendible graph if and only if, for
every nontrivial induced subgraph $H$ of $G$, precisely one of the following
conditions is satisfied:
\begin{enumerate}
	\item $H$ is disconnected.

	\item $\overline{H}$ is disconnected.

	\item $H$ is an extension graph.

	\item There is a unique separable extension graph $G$ such that $H$ is a
  	$G$-spider with nonempty head.
  \end{enumerate}
\end{theorem}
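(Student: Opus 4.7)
The plan is to prove both directions by induction on $|V_G|$, paralleling the analogous decomposition theorems for cographs (\Cref{theo:cographs}) and for $P_4$-sparse graphs (\Cref{thm:connCharSparse}).

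For the sufficiency direction, suppose every nontrivial induced subgraph of $G$ satisfies one of the four conditions. I would show inductively that each of the relevant operations preserves $P_4$-extendibility. For disjoint union and join, any induced $P_4$ lies inside a single summand (respectively, co-summand) and its extension vertices lie in that same summand, so the inductive hypothesis applies. Extension graphs of order at most five are checked directly: for each of the eight graphs in \Cref{fig:extSets} one inspects the $P_4$'s they contain and verifies $|S(W)| \le 1$. For $G'$-spiders with nonempty head, the head acts as a set of vertices all having the same attachment to the body and the legs, so any $P_4$ through a head vertex corresponds in a controlled way to a $P_4$ of $G'$, and the extension set is inherited from that of $G'$.

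For the necessity direction, fix a nontrivial induced subgraph $H$ such that $H$ and $\overline H$ are both connected and $H$ is not itself an extension graph; the goal is to exhibit a separable extension graph $G'$ for which $H$ is a $G'$-spider with nonempty head. Since $H$ is connected, co-connected, and nontrivial, it contains an induced $P_4$, say $W$. Because $G$ is $P_4$-extendible, $|S(W)| \le 1$, so the extension set $D := W \cup S(W)$ induces one of the eight extension graphs and, by assumption, $D \subsetneq V_H$. I would now pick $v \in V_H \setminus D$ and perform a case analysis on $H[D]$ and on the possible neighborhoods of $v$ in $D$. The crucial claim to establish is that the three nonseparable extension graphs ($C_5$, $P_5$, $\overline{P_5}$) admit no attachment of $v$ consistent with $P_4$-extendibility (any choice either creates a new $P_4$ with two extension vertices, or forces a disconnection of $H$ or $\overline H$), whereas for the four separable extension graphs the unique admissible attachment is adjacency to every midpoint and nonadjacency to every endpoint --- precisely the spider-head condition. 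Applying this description to every vertex of $V_H \setminus D$ and checking, by a short $P_4$-extendibility argument, that no edges between head vertices are forced or forbidden, yields the $G'$-spider structure; uniqueness of $G'$ follows because the body/legs partition is recovered from the attachment of any head vertex.

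The main obstacle is the case analysis just sketched: with eight candidate graphs $H[D]$ and up to $2^{|D|}$ possible neighborhoods for $v$ in each, the bookkeeping is tedious. The unifying idea is that every forbidden attachment produces either a new $P_4$ violating the bound $|S(W)| \le 1$ or an induced headless spider on six vertices --- the latter being explicitly flagged in the paragraph preceding the theorem as a minimal $P_4$-extendible obstruction, which gives the contradiction.
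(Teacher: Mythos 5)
This theorem is not proved in the paper at all: it is quoted from Jamison and Olariu \cite{jamisonDAM34}, so there is no in-paper argument to compare yours against (the cited proof goes through their ``$p$-connectedness'' machinery rather than the direct case analysis you sketch). Your sufficiency direction is fine in outline and in fact mirrors what the paper later establishes independently (\Cref{rem:pComp} and \Cref{lem:sigma} are exactly the closure statements you need, plus a direct check of the eight graphs in \Cref{fig:extSets}).

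The necessity direction, however, has a genuine gap: the local dichotomy you rely on is false. A vertex $v$ that is complete to $D$, or anticomplete to $D$, lies in no induced $P_4$ of $H[D\cup\{v\}]$, so these attachments are consistent with $P_4$-extendibility for \emph{every} one of the eight extension graphs; for instance $C_5\oplus K_1$, $K_1+C_5$ and $P_4\oplus K_1$ are all $P_4$-extendible. Hence it is not true that the nonseparable graphs $C_5,P_5,\overline{P_5}$ ``admit no attachment consistent with $P_4$-extendibility,'' nor that for the separable graphs the spider-head attachment is the \emph{unique} admissible one. Your hedge that the other attachments ``force a disconnection of $H$ or $\overline H$'' cannot be verified one vertex at a time, since connectivity and co-connectivity are global properties of $H$: a vertex complete to $D$ only disconnects $\overline H$ if it is complete to everything, and mixed configurations (one vertex complete to $D$, another spider-attached or anticomplete) are locally $P_4$-extendible as well, e.g.\ the bull together with a vertex adjacent to all of it. So the real work is a global step you have not supplied: classify the vertices outside $D$ into complete, anticomplete, and spider-attached types, analyze their pairwise interactions to find the violating $P_4$'s, and use connectivity and co-connectivity of all of $H$ to show the first two types are empty (separable case) or that no such $H$ exists (nonseparable case); one must also check that the resulting spider structure accounts for all $P_4$'s of $H$, not only those meeting the particular extension set $D$ you started from. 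As written, the proposal asserts the conclusion of that global argument rather than proving it.
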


The class of $(6,2)$-graphs clearly is another superclass of cographs which is
incomparable with $P_4$-sparse graphs. As the following result states,
$P_4$-extendible graphs are closely related to $(6,2)$-graphs. With the help of
this theorem, analogous results for $(6,2)$-graphs can be obtained for each
proposition about $P_4$-extendible graphs. For the sake of length, we will not
explicitly state such results.

\begin{theorem}[\cite{babelDAM95}]
\label{thm:(6-2)vsP4ext}
If $G$ is a graph, then $G$ is a $(6,2)$-graph if and only if $G$ is a
$C_5$-free $P_4$-extendible graph.
\end{theorem}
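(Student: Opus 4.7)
The plan is to prove both directions of the equivalence via the Jamison--Olariu enumeration of extension graphs in \Cref{fig:extSets} together with the observation that, among those eight graphs, only $C_5$ contains more than two induced $P_4$'s: $C_5$ contains five (one for each four-element subset of its vertex set), $P_4$ itself contains one, and each of $P_5, \overline{P_5}, P, \overline{P}, F, \overline{F}$ contains exactly two; this is a straightforward case-by-case inspection of the graphs in \Cref{fig:extSets}.

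For the forward direction, assume $G$ is a $(6,2)$-graph. Since $C_5$ has five induced $P_4$'s on five vertices, $G$ is $C_5$-free. To prove $P_4$-extendibility, suppose for contradiction that $W$ induces a $P_4$ with $|S(W)| \ge 2$. I would exhibit two distinct witnesses $v_1, v_2 \in S(W)$ together with induced $P_4$'s $Q_1, Q_2$ satisfying $v_i \in Q_i \subseteq W \cup \{v_i\}$; then $W, Q_1, Q_2$ are three pairwise distinct induced $P_4$'s inside the six-vertex set $W \cup \{v_1, v_2\}$, contradicting the $(6,2)$ hypothesis.

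For the converse, assume $G$ is $C_5$-free and $P_4$-extendible, and let $X \subseteq V_G$ with $|X| \le 6$ induce three or more $P_4$'s; necessarily $|X| \in \{5, 6\}$. If $|X| = 5$, fix a $P_4$ $W \subsetneq X$ and let $v$ be the fifth vertex; every additional $P_4$ in $G[X]$ passes through $v$ and meets $W$, so $v \in S(W)$, and $P_4$-extendibility forces $S(W) = \{v\}$. Hence $X = W \cup S(W)$ is an extension set, so $G[X]$ is one of the eight extension graphs; having at least three induced $P_4$'s, it must be $C_5$, contradicting $C_5$-freeness. If $|X| = 6$, fix any $P_4$ $W$ in $X$; any other $P_4$ $W'$ in $X$ shares at least $4 + 4 - 6 = 2$ vertices with $W$, so its at most two non-$W$ vertices lie in $S(W)$. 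By $P_4$-extendibility $|S(W)| \le 1$, so all $P_4$'s of $G[X]$ lie in $W \cup \{v\}$ for a single vertex $v$, reducing to the five-vertex case.

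The delicate step is the witness selection in the forward direction, since a given $v \in S(W)$ may be certified only by $P_4$'s spreading outside $W \cup \{v\}$. I would minimise the quantity $|Q \setminus W|$ over all pairs $(v, Q)$ with $v \in S(W)$ and $Q$ a $P_4$ through $v$ meeting $W$: at a minimiser, every vertex of $Q \setminus W$ also belongs to $S(W)$, and a short combinatorial analysis along the path of $Q$ (shifting $Q$ one step towards $W$ whenever an extra non-$W$ vertex of $Q$ is present) produces a witness of strictly smaller defect unless already $|Q \setminus W| = 1$. Applying this argument twice to obtain $v_1, v_2$ completes the forward direction.
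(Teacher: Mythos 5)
Your converse direction is sound: reducing a six-vertex violator to a five-vertex one via $|S(W)|\le 1$, recognizing the resulting five-set as an extension set $W\cup S(W)$, and then using the fact (quoted in the paper from Jamison and Olariu) that extension sets induce one of the graphs of \Cref{fig:extSets}, of which only $C_5$ carries more than two induced $P_4$'s, is a correct and clean argument. For calibration: the paper does not prove this theorem at all, it imports it from Babel and Olariu, where it rests on the structure theory of $p$-connected graphs and $(q,q-4)$-graphs rather than on direct counting, so any self-contained counting proof is necessarily a different route from the source.

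The forward direction, however, has a genuine gap exactly at the step you flag as delicate, and your proposed repair does not work. The minimisation-plus-shifting lemma is false as a combinatorial statement: let $W=w_1w_2w_3w_4$ induce a $P_4$, add a vertex $a$ adjacent to all four vertices of $W$, and attach a pendant path $a\,b\,c$. Then $a,b,c\in S(W)$, since each lies on the induced $P_4$ $w_1abc$, which meets $W$; so this graph is not $P_4$-extendible. Yet there is no induced $P_4$ with exactly one or two vertices outside $W$: the vertices $b,c$ have no neighbours in $W$, and $a$ is complete to $W$, so every induced $P_4$ other than $W$ itself has defect three with respect to $W$, and no ``shift along $Q$'' can lower the defect. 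Worse, every six-set of the form $W\cup\{v_1,v_2\}$ with $v_1,v_2\in S(W)$ contains $W$ as its only induced $P_4$, so the three $P_4$'s your template requires simply do not exist inside such a set; in this configuration the failure of the $(6,2)$ property is witnessed only by the set $\{w_1,w_2,w_3,a,b,c\}$, which does not contain $W$ (it carries the three $P_4$'s $w_iabc$, $i=1,2,3$). A correct argument must therefore either re-choose the base $P_4$ (here $w_1abc$ does admit two defect-one extenders, namely $w_2$ and $w_3$) or hunt for the violating six-set away from $W$; your sketch does neither. In addition, even in situations where one defect-one witness exists, ``applying the argument twice'' is not justified: the minimiser produces a single pair $(v_1,Q_1)$, and nothing in the sketch guarantees a second, distinct vertex of $S(W)$ admitting a $P_4$ inside $W\cup\{v_2\}$. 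As written, the implication $(6,2)\Rightarrow P_4$-extendible is not established.
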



\section{Well-quasi-orderings}
\label{sec:wqo}

Throughout this section, we show that any hereditary property has a finite
number of minimal obstructions when restricted to $P_4$-sparse or
$P_4$-extendible graphs. We will often use the following observation in the rest
of the text without mentioning it explicitly.

\begin{remark}
\label{rem:hered}
Let $\mathcal{P}$ be a hereditary property of graphs, and let $H$ be a
$\mathcal{P}$-obstruction. If $G$ is a minimal $\mathcal{P}$-obstruction such
that $H \le G$, then $G \cong H$.
\end{remark}

A poset $(M, \le)$ is called a {\em well-quasi-ordering} (WQO) if any infinite
sequence of elements $\{ a_i \}_{i \in \mathbb{N}}$ from $M$ contains an
increasing pair, that is to say, a pair $a_i\le a_j$ such that $i<j$.
Equivalently, $(M, \le)$ is a WQO if and only if $M$ contains neither an
infinite decreasing chain nor an infinite antichain.

Let $\mathcal{G}$ be a graph class ordered by the induced subgraph relation, and
let $\mathcal P$ be a hereditary property on $\mathcal{G}$. By \Cref{rem:hered},
the family of minimal $\mathcal P$-obstructions is an antichain. Moreover, any
antichain in $(\mathcal{G}, \le)$ is the family of minimal $\mathcal
Q$-obstructions for a hereditary property $\mathcal Q$. Then, since graphs
ordered by the induced subgraph relation do not have infinite decreasing chains,
$\mathcal{G}$ is WQO by the induced subgraph relation if and only if it contains
no infinite antichain, or equivalently, if every hereditary property on
$\mathcal{G}$ has only finitely many minimal obstructions. Peter Damaschke
\cite{damaschkeJGT14} used the following theorem to prove that cographs and
$P_4$-reducible graphs are WQO under the induced subgraph relation.

\begin{theorem}[\cite{damaschkeJGT14}]
\label{thm:damasWQO}
Let $\mathcal{G}$ be a family of graphs, and let $\Sigma$ and $\Pi$ be sets of
unary and binary graph operations, respectively. Define partial orderings on
$\Sigma$ and $\Pi$ as follows:
\[
  \sigma\preceq \sigma' \textnormal{ if and only if } \sigma(G) \le \sigma'(G)
  \textnormal{ for all graphs } G.
\]
\[
  \pi \preceq \pi' \textnormal{ if and only if } \pi (G,H) \le \pi' (G,H)
  \textnormal{ for all graphs } G,H.
\]
Suppose that the following assertions are satisfied:
\begin{enumerate}
	\item $\mathcal{G}$ is WQO by the induced subgraph relation.

	\item Any $\sigma \in \Sigma$ is monotonous (that is, $H \le G$ implies
    $\sigma(H) \le \sigma (G)$), and extensive (that is, for any graph $G$, $G
    \le \sigma(G)$).

	\item Any $\pi \in \Pi$ is commutative, associative, and satisfies:
    \begin{enumerate}
      \item if $G \le G'$ and $H \le H'$, then $\pi(G,H) \le \pi(G',H')$, and

      \item $G,H \le \pi(G,H)$.
    \end{enumerate}

	\item $(\Sigma, \preceq)$ and $(\Pi, \preceq)$ are WQO.
\end{enumerate}
Then, the class $\Gamma(\mathcal{G}, \Sigma, \Pi)$ of all graphs obtained by
start graphs from $\mathcal{G}$ using operations from $\Sigma$ and $\Pi$, is WQO
under the induced subgraph relation.
\end{theorem}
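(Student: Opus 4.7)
The plan is to represent every graph in $\Gamma(\mathcal{G},\Sigma,\Pi)$ by a finite rooted \emph{construction tree}, whose leaves are labelled by elements of $\mathcal{G}$, whose nodes of out-degree one are labelled by elements of $\Sigma$, and whose nodes of out-degree at least two are labelled by elements of $\Pi$. Because the operations in $\Pi$ are commutative and associative, repeated applications of the same $\pi$ can be collapsed into a single unordered multichild node, so each graph $G$ admits at least one such tree $T(G)$, unique up to reordering of siblings. Let $\mathcal{T}$ denote the resulting collection of labelled unordered rooted trees, with label alphabet $\mathcal{G}\sqcup\Sigma\sqcup\Pi$ ordered by the disjoint union of $(\mathcal{G},\le)$, $(\Sigma,\preceq)$, and $(\Pi,\preceq)$. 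Since the disjoint union of WQOs is WQO, the label alphabet is WQO, and Kruskal's tree theorem gives that $\mathcal{T}$ is WQO under the standard labelled tree embedding: $T'\trianglelefteq T$ if there is an injection of the nodes of $T'$ into those of $T$ preserving the ancestor relation and the least common ancestor, with the label of each image dominating the label of its preimage in the alphabet poset.

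The proof then reduces to the following key lemma: if $T(H)\trianglelefteq T(G)$ in $\mathcal{T}$, then $H\le G$. Granted this, an infinite antichain $\{G_i\}_{i\in\mathbb{N}}$ in $\Gamma(\mathcal{G},\Sigma,\Pi)$ would yield, upon selecting any $T(G_i)$, an infinite antichain in $\mathcal{T}$, contradicting Kruskal. I would prove the lemma by induction on $|V(T(H))|$. In the base case $T(H)$ is a single leaf labelled $H_0$, mapped to some node $u$ of $T(G)$ whose label is a graph $H_1\in\mathcal{G}$ with $H_0\le H_1$; the extensivity hypotheses $G\le\sigma(G)$ and $G,H\le\pi(G,H)$ propagate $H_0\le H_1$ all the way up to the root of $T(G)$, yielding $H_0\le G$. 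In the inductive step, let $r$ be the root of $T(H)$ with label $\alpha\in\Sigma\cup\Pi$ and children generating graphs $H_1,\ldots,H_k$; by induction each $H_i$ is an induced subgraph of the graph $G_i'$ evaluated at the image of the root of its subtree. The pointwise comparisons $\alpha\preceq\alpha'$ (where $\alpha'$ is the label of $\phi(r)$), together with the monotonicity of every $\pi$ and $\sigma$ in its arguments, reconstitute $H$ as an induced subgraph of the graph evaluated at $\phi(r)$, and extensivity once more carries this relation upwards to $G$.

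The main obstacle is precisely this lemma, and within it the fact that a tree embedding need not respect the identity of operations (one only knows $\alpha\preceq\alpha'$, never $\alpha=\alpha'$), nor map the children of $r$ to the children of $\phi(r)$: their images may be scattered among non-consecutive subtrees of $\phi(r)$, possibly separated by additional $\sigma$ or $\pi$ nodes that do not appear in $T(H)$. Associativity and commutativity of the operations in $\Pi$ is what allows us to regroup such scattered images into a single valid argument list for $\alpha'$, while extensivity absorbs the intermediate operations on the ancestor path without destroying the induced subgraph relations already obtained by the inductive hypothesis. Once this bookkeeping is organized carefully, the induction closes and the theorem follows by the Kruskal-based contradiction outlined above.
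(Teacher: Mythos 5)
The paper does not actually prove this statement: it is quoted from Damaschke \cite{damaschkeJGT14} and used as a black box, so there is no in-paper proof to compare against. Your sketch --- construction trees over the WQO label alphabet $\mathcal{G}\sqcup\Sigma\sqcup\Pi$, Kruskal's tree theorem for inf-preserving labelled embeddings, and the key lemma that a tree embedding forces induced-subgraph containment via monotonicity, extensivity, and the incomparability of labels of different kinds --- is essentially Damaschke's original argument, and the details you flag (scattered images of children regrouped by commutativity/associativity, intermediate nodes absorbed by extensivity) are exactly the right bookkeeping, so the proposal is sound.
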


In the following sections we provide new characterizations for both,
$P_4$-sparse and $P_4$-extendible graphs, in order to show that
\Cref{thm:damasWQO} can be used to prove that such graph families (which are
$P_4$-reducible superclasses), are WQO under the induced subgraph relation.


\subsection{Hereditary properties on $P_4$-sparse graphs}

Jamison and Olariu \cite{jamisonDAM35} gave a constructive characterization for
$P_4$-sparse graphs starting with trivial graphs and using three binary
operations. Nevertheless, the third operation used in such a characterization is
not commutative, so it does not satisfy the hypotheses of \Cref{thm:damasWQO},
and we cannot use that characterization with Damaschke's theorem to conclude
that $P_4$-sparse graphs are WQO. But, not everything is lost. Next, we
establish a different constructive characterization for $P_4$-sparse graphs that
is more appropriate for said purpose. Our characterization starts with trivial
graphs and headless spiders, and it involves two binary operations as well as
two infinite families of unary graph operations. We start with the following
straightforward observation.

\begin{remark}
\label{rem:sparsePreserv}
Disjoint union and join operations preserve $P_4$-sparse graphs.
\end{remark}

Let $H$ be a graph, and let $j$ be an integer, $j\ge 2$. The graph $\sigma_j(H)$
is the thin spider $G=(S, K, R)$ such that $|S|=|K|=j$ and $G[R]=H$.
Analogously, the graph $\tau_j(G)$ is the thick spider $G=(S, K, R)$ such that
$|S|=|K|=j$ and $G[R]=H$. Notice that $\sigma_2(H)=\tau_2(H)$ for any graph $H$.
The following observation follows directly from the definition of $P_4$-sparse
graphs.

\begin{remark}
\label{rem:spiderGen}
Let $j$ be an integer, $j\ge 2$. The graphs $\sigma_j(H)$ and $\tau_j(H)$ are
$P_4$-sparse graphs if and only if $H$ is a $P_4$-sparse graph. In addition, any
headless spider is a $P_4$-sparse graph.
\end{remark}

Let $\Pi$ be the set of binary operations whose only elements are the disjoint
union and join graph operations, and let $\Sigma = \{\sigma_j\}_{j\ge2} \cup
\{\tau_j\}_{j\ge3}$. Let us define the following partial order on $\Sigma$:
\[
  \sigma \preceq \sigma' \textnormal{ if and only if } \sigma(H) \le \sigma'(H)
  \textnormal{ for all graphs } H.
\]

It is straightforward to show that $\sigma_2 \preceq \sigma_3 \preceq \sigma_4
\preceq \cdots$ and $\sigma_2 \preceq \tau_3 \preceq \tau_4 \preceq \tau_5
\preceq \cdots$, so it follows trivially that $\Sigma$ is WQO by $\preceq$.
Analogously, it is easy to show that the family of graphs $\mathcal{G}$ whose
only elements are the trivial graph and all headless spiders is WQO under the
induced subgraph relation. Now we give our characterization of $P_4$-sparse
graphs.

\begin{theorem}
Let $G$ be a graph. The following statements are equivalent.
\begin{enumerate}
	\item $G$ is a $P_4$-sparse graph.

	\item $G$ is obtained from trivial graphs by a finite sequence of
    $\Sigma$- and $\Pi$-operations.
\end{enumerate}
\end{theorem}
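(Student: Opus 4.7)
The plan is to prove the two implications separately, with the direction $(2) \Rightarrow (1)$ being the easy one. I would first verify that the trivial graph is vacuously $P_4$-sparse, and then check that each operation in $\Sigma \cup \Pi$ preserves the class of $P_4$-sparse graphs: the $\Pi$-operations are handled by \Cref{rem:sparsePreserv} and the $\Sigma$-operations by \Cref{rem:spiderGen}. A short induction on the length of the construction sequence then closes this direction.

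For the substantive direction $(1) \Rightarrow (2)$, I would argue by strong induction on $n = |V_G|$, using the Jamison--Olariu connectedness characterization \Cref{thm:connCharSparse}. The base case $n = 1$ is immediate since $G$ is itself a trivial graph. For $n \geq 2$, the graph $G$ is a nontrivial $P_4$-sparse induced subgraph of itself, so exactly one of the three alternatives of \Cref{thm:connCharSparse} applies. If $G$ is disconnected, each of its components is a proper induced subgraph and therefore $P_4$-sparse; the inductive hypothesis gives a $\Sigma\cup\Pi$-construction for each component, and iterated disjoint unions combine them into a construction of $G$. The case in which $\overline{G}$ is disconnected is dual, using iterated joins. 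If $G$ is a spider with partition $(S,K,R)$ and $|S|=|K|=j$, then directly from the definitions $G = \sigma_j(G[R])$ when $G$ is thin and $G = \tau_j(G[R])$ when $G$ is thick; since $\sigma_2 = \tau_2$, we may assume $j \geq 3$ in the thick case, matching the index set declared for $\Sigma$. When $R \neq \varnothing$, applying the inductive hypothesis to the proper induced subgraph $G[R]$ and composing with the appropriate $\Sigma$-operation completes the step.

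The delicate point, and the main obstacle I foresee, is the case of a headless spider, where $R = \varnothing$ and the induction does not formally apply to $G[R]$. I would resolve this by interpreting $\sigma_j$ and $\tau_j$ as defined also on the empty graph, so that $\sigma_j(\varnothing)$ and $\tau_j(\varnothing)$ are precisely the headless thin and thick spiders on $2j$ vertices; this is consistent with the paper's declared base family for the well-quasi-ordering framework (the trivial graph together with all headless spiders) and with \Cref{rem:spiderGen}, which records that every headless spider is $P_4$-sparse. With this convention in place the induction closes cleanly, and the exclusivity of the three alternatives in \Cref{thm:connCharSparse} guarantees that the decomposition step is unambiguous.
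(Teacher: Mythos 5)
Your proposal is correct and follows essentially the same route as the paper, which only sketches this argument: $(2)\Rightarrow(1)$ via \Cref{rem:sparsePreserv} and \Cref{rem:spiderGen}, and $(1)\Rightarrow(2)$ by induction on the order using \Cref{thm:connCharSparse}. Your treatment of the headless-spider case is exactly the right reading of the statement, since the paper's intended base family $\mathcal{G}$ consists of the trivial graph together with all headless spiders (as made explicit just before and after the theorem), so no gap remains.
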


\begin{proof}
	We have from \Cref{thm:connCharSparse,rem:sparsePreserv,rem:spiderGen} that 2
	implies 1. The converse implication can be easily proved proceeding by
	induction on the order of $G$ and using \Cref{thm:connCharSparse}.
\end{proof}

The theorem above shows that $\Gamma(\mathcal{G}, \Sigma, \Pi)$ is precisely the
class of $P_4$-sparse graphs. As we pointed before $(\mathcal{G}, \le)$ and
$(\Sigma,\le)$ are WQO and, since $\Pi$ is a finite set, $(\Pi,\le)$ is too, so
the following corollary is a simple application of \Cref{thm:damasWQO}.

\begin{corollary}
The class of $P_4$-sparse graphs is WQO under the induced subgraph relation.
Equivalently, any hereditary property on $P_4$-sparse graphs admits a finite
forbidden induced subgraph characterization.
\end{corollary}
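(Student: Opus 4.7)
The plan is to apply \Cref{thm:damasWQO} directly to the triple $(\mathcal{G}, \Sigma, \Pi)$ defined immediately before the statement. By the preceding theorem, $\Gamma(\mathcal{G}, \Sigma, \Pi)$ is exactly the class of $P_4$-sparse graphs, so it suffices to verify the four hypotheses of Damaschke's theorem; the equivalence between being WQO and admitting finite forbidden induced subgraph characterizations for every hereditary property was already explained at the start of \Cref{sec:wqo}.

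First, I would verify that $(\mathcal{G}, \le)$ is WQO. Recall that $\mathcal{G}$ consists of the trivial graph together with all headless spiders; each headless spider is determined up to isomorphism by an integer $j\ge 2$ and a type (thin or thick), and $\sigma_2 = \tau_2$. Removing one leg together with its matched body vertex turns a thin (respectively thick) headless spider of parameter $j+1$ into a thin (respectively thick) headless spider of parameter $j$. Hence the thin family and the thick family are each a chain under $\le$, with common minimum in parameter $2$, so $\mathcal{G}$ is a finite union of chains and is WQO. Similarly, the text has recorded $\sigma_2 \preceq \sigma_3 \preceq \cdots$ and $\sigma_2 \preceq \tau_3 \preceq \tau_4 \preceq \cdots$, so $(\Sigma, \preceq)$ is a union of two chains sharing a common minimum and is WQO; and $\Pi$ is finite, hence trivially WQO.

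Next, I would verify the operational axioms. For any $j \ge 2$ and any graph $H$, the head of $\sigma_j(H)$ (respectively $\tau_j(H)$) is precisely $H$ as an induced subgraph, giving extensivity; and given an induced embedding $H \hookrightarrow H'$, fixing legs and body and applying the embedding on the head produces an induced embedding $\sigma_j(H) \hookrightarrow \sigma_j(H')$ (and analogously for $\tau_j$), giving monotonicity. For $\Pi$, disjoint union and join are both commutative and associative, the joint monotonicity $G \le G'$ and $H \le H'$ implies $\pi(G,H) \le \pi(G',H')$ is immediate, and $G, H \le \pi(G, H)$ for $\pi \in \{+, \oplus\}$ is likewise immediate from the definitions.

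With the four hypotheses verified, \Cref{thm:damasWQO} yields that $\Gamma(\mathcal{G}, \Sigma, \Pi)$, which by the preceding theorem equals the class of $P_4$-sparse graphs, is WQO under $\le$. I do not expect any genuine obstacle here: the substantive work has been front-loaded into the choice of $\Sigma$ and $\Pi$ and the constructive characterization of the preceding theorem. The only subtle point is to notice that both spider families (thin and thick) must be viewed as separate chains in both $(\mathcal{G}, \le)$ and $(\Sigma, \preceq)$, which is what forced the author to introduce $\sigma_j$ and $\tau_j$ as two separate operation families rather than a single one.
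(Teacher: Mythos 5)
Your proposal is correct and takes essentially the same route as the paper: the paper obtains the corollary as a direct application of \Cref{thm:damasWQO} to $\Gamma(\mathcal{G},\Sigma,\Pi)$, citing that $(\mathcal{G},\le)$ and $(\Sigma,\preceq)$ are WQO (each being a union of two chains with common minimum) and that $\Pi$ is finite. Your explicit verification of the monotonicity/extensivity axioms for $\sigma_j,\tau_j$ and of the chain structure simply fills in details the paper leaves as ``straightforward.''
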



\subsection{Hereditary properties on $P_4$-extendible graphs}

A constructive characterization for $P_4$-extendible graphs starting with
trivial graphs and using 4 binary operations was given in \cite{jamisonDAM34}.
As well as the constructive characterization for $P_4$-sparse graphs given in
\cite{jamisonDAM35}, this characterization for $P_4$-extendible graphs does not
fit the hypotheses of \Cref{thm:damasWQO}, so we are unable to conclude that
$P_4$-extendible graphs are WQO in this way. With this purpose in mind, we
establish a new constructive characterization for $P_4$-extendible graphs which
starts from a set of nine basic graphs and involves two binary operations as
well as five unary operations.

Let $\mathcal{G}$ be the set of graphs whose elements are the trivial graph
$K_1$ and the eight extension graphs, that is, $\mathcal{G} = \{K_1, P_4, C_5,
P_5, \overline{P_5}, P, \overline{P}, F, \overline{F}\}$. For each separable
extension graph $S$ (see \Cref{fig:extSets}) and any graph $G$, we define the
graph $\sigma_S(G)$ as the graph with vertex set $V_S \cup V_G$ and edge set
\[
  E_S \cup E_G \cup \{ xy \mid x \textnormal{ is a midpoint of $S$ and } y \in
  V_G \}.
\]
For each separable extension graph $S$, the unary operation $\sigma_S$ is its
associated {\em separable extension operation}. Let $\Sigma$ be the set of the
five separable extension operations $\sigma_S$, and let $\Pi$ be the set of
binary operations whose only elements are the disjoint union and join
operations.

\begin{remark}[\cite{jamisonDAM34}]
\label{rem:pComp}
Let $G$ be a graph whose vertex set admits a partition into two nonempty
disjoint sets $V'$ and $V''$ such that no $P_4$ in $G$ contains vertices from
both $V'$ and $V''$. Then $G$ is $P_4$-extendible if and only if the subgraphs
of $G$ induced by $V'$ and $V''$ are.
\end{remark}

Observe that, from the remark above, $P_4$-extendible graphs are clearly closed
under join and disjoint union operations. Now we use such remark for proving
that separable extension operations also preserve $P_4$-extendible graphs.

\begin{lemma}
\label{lem:sigma}
The class of $P_4$-extendible graphs is closed under separable extension
operations, that is to say, for any $P_4$-extendible graph $G$ and any separable
extension graph $S$, $\sigma_S(G)$ is a $P_4$-extendible graph.
\end{lemma}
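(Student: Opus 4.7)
The plan is to invoke Remark~\ref{rem:pComp} with the partition $V(\sigma_S(G)) = V_S \cup V_G$. If no induced $P_4$ of $\sigma_S(G)$ crosses this partition, the remark reduces $P_4$-extendibility of $\sigma_S(G)$ to that of the induced subgraphs on the two parts. The subgraph on $V_S$ is exactly $S$, an extension graph, and therefore $P_4$-extendible (as observed just after Figure~\ref{fig:extSets}); the subgraph on $V_G$ is $G$, which is $P_4$-extendible by hypothesis. So the whole argument reduces to the no-crossing statement for induced $P_4$'s.

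To prove that no-crossing statement I would proceed by contradiction, assuming an induced $P_4$ $P=abcd$ of $\sigma_S(G)$ meets both $V_S$ and $V_G$, and eliminating each possible split $(|V(P)\cap V_S|,|V(P)\cap V_G|)\in\{(1,3),(2,2),(3,1)\}$. Writing $K$ for the midpoints of $S$ and $L$ for the endpoints, the key structural input from the definition of $\sigma_S$ is that every vertex of $K$ is adjacent to every vertex of $V_G$, while no vertex of $L$ is. The splits $(1,3)$ and $(2,2)$ fall out immediately from the maximum degree~$2$ of $P_4$: a single $K$-vertex together with three $V_G$-vertices would have degree three, an $L$-vertex in a mixed split is isolated in $P$, and two $K$-vertices together with two $V_G$-vertices induce a graph containing $K_{2,2}$ plus the $K$-edge, hence at least four edges. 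The $(3,1)$ split is the subtle one: depending on whether the $V_G$-vertex is an endpoint or a midpoint of $P$, the restriction on $S$ is either the existence of an induced $P_3$ with both extremes in $L$ and center in $K$, or the existence of two non-adjacent vertices of $K$ together with a vertex of $L$ adjacent to exactly one of them.

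The main obstacle is therefore the finite check that neither of these two configurations occurs in any of the five separable extension graphs $P_4,P,\overline{P},F,\overline{F}$. This is a routine inspection of Figure~\ref{fig:extSets}: the set $L$ is independent in four of the five graphs and in the fifth ($\overline{P}$) the only edge inside $L$ has no suitable $K$-neighbor, while $K$ induces a clique in four of the five and in the fifth ($P$) the only non-adjacent pair in $K$ has identical neighbours in $L$. Both configurations being excluded, Remark~\ref{rem:pComp} closes the proof.
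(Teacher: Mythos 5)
Your overall strategy is the same as the paper's: reduce via \Cref{rem:pComp} to showing that no induced $P_4$ of $\sigma_S(G)$ meets both $V_S$ and $V_G$, discard the splits $(1,3)$ and $(2,2)$, and settle the $(3,1)$ split by a finite inspection of the five separable extension graphs; your closing inspection (no nonadjacent pair of midpoints is distinguished by an endpoint; $L$ is independent except in $\overline{P}$, where the unique $L$-edge is not distinguished by any midpoint) is exactly the paper's check and is correct.

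However, one of your two $(3,1)$ configurations is misderived. If the $V_G$-vertex $u$ is an endpoint of the crossing $P_4$, its unique neighbour in the $P_4$ must be a midpoint $k$, and the remaining two vertices are endpoints $a,b$ of $S$ with edges $ka$, $ab$ and non-edge $kb$; so the configuration to exclude in $S$ is an edge inside $L$ together with a midpoint adjacent to exactly one of its ends (a path $k$--$a$--$b$ with centre in $L$), not ``an induced $P_3$ with both extremes in $L$ and centre in $K$''. The configuration you state would, with $u$ attached to the midpoint, induce $K_{1,3}$ rather than $P_4$, and it actually occurs in $F$ (the two degree-one vertices of the fork share the midpoint $1$ as a common neighbour), so a literal check against your stated configuration would wrongly flag $F$ and the argument would stall there. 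Since your final sentence in fact verifies the correct condition, the repair is only to restate the configuration; a similar small patch is needed in your $(2,2)$ elimination, where an $L$-vertex paired with a $K$-vertex need not be isolated (if they are adjacent in $S$) --- there the contradiction is instead that the $K$-vertex, being adjacent to both $V_G$-vertices, has degree three. With these local corrections your proof coincides with the paper's.
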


\begin{proof}
	By definition of $\sigma_S$, the vertex set of $\sigma_S(G)$ is partitioned
	into $V_S$ and $V_G$, and by hypothesis the graphs induced by these sets are
	$P_4$-extendible. Now, by \Cref{rem:pComp} we only need to prove that no
	$P_4$ has vertices in both $V_S$ and $V_G$. Assume the contrary to obtain a
	contradiction.

	Let $M$ be the set of midpoints of $S$, and let $W$ be a vertex set inducing a
	$P_4$ such that $W \cap V_S \ne \varnothing \ne W \cap V_G$. It is an easy
	observation that, since $W$ induces a $P_4$, $|W \cap V_G|=1$, $|W \cap
	V_s|=3$, and $|W \cap M|\le 2$. So we have only two possible cases, either $|W
	\cap M|=1$ or $|W \cap M|=2$. Let $u$ be the only vertex in $W \cap V_G$.

	First, assume that $W$ has only one endpoint $x$ of $S$ and that $y$ and $z$
	are both midpoints of $S$. Let $E$ be the edge set of $\sigma_S(G)$. By
	definition of $\sigma_S$ we have that $uy,uz\in E$ and $ux \notin E$.
	Moreover, since $W$ induces a $P_4$, we have that $yz \notin E$ and $x$ is
	adjacent to exactly one of $y$ and $z$. But this is impossible, because the
	only separable extension graph with two nonadjacent midpoints is $P$, but no
	endpoint of $P$ distinguishes between its nonadjacent midpoints.

	Otherwise, $W$ has two endpoints, $y$ and $z$, and one endpoint, $x$, of $S$.
	By definition of $\sigma_S$ we have that $ux \in E$ and $uy,uz \notin E$.
	Moreover, since $W$ induces a $P_4$, we have that $yz \in E$ and $x$ is
	adjacent to exactly one of $y$ and $z$. Here we have a contradiction, because
	the only separable extension graph with two adjacent endpoints is $\overline
	P$, but no midpoint of $P$ distinguishes between its adjacent endpoints.
\end{proof}

\begin{theorem}
Let $G$ be a graph. The following statements are equivalent.
\begin{enumerate}
	\item $G$ is a $P_4$-extendible graph.

	\item $G$ is obtained from $\mathcal{G}$ by a finite sequence of $\Sigma$- and
    $\Pi$-operations.
\end{enumerate}
\end{theorem}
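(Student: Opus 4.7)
The plan is to prove the two implications separately, exactly mirroring the strategy used for $P_4$-sparse graphs in the previous subsection, but using the structural tools available for $P_4$-extendible graphs.

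For the implication \textit{2 implies 1}, I would proceed by induction on the number of $\Sigma$- and $\Pi$-operations used to construct $G$. The base case consists of the nine graphs in $\mathcal{G}$: the trivial graph $K_1$ is vacuously $P_4$-extendible, and as noted right before \Cref{thm:connChar} each of the eight extension graphs is trivially $P_4$-extendible (they each contain exactly one induced $P_4$, and there are no vertices outside it). For the inductive step, observe that the binary operations in $\Pi$ preserve $P_4$-extendibility: if $G_1$ and $G_2$ are $P_4$-extendible and $G$ is their disjoint union (resp.\ join), then no induced $P_4$ of $G$ can contain vertices from both $V_{G_1}$ and $V_{G_2}$, so \Cref{rem:pComp} applies. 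Preservation under $\Sigma$ is exactly \Cref{lem:sigma}.

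For the implication \textit{1 implies 2}, I would proceed by induction on $|V_G|$. The base case $|V_G| = 1$ gives $G = K_1 \in \mathcal{G}$. For the inductive step, $P_4$-extendibility is a hereditary property (any $P_4$ in an induced subgraph is also a $P_4$ in $G$, and the set of external vertices lying on overlapping $P_4$'s can only shrink upon restriction), so every proper induced subgraph of $G$ is $P_4$-extendible and falls under the induction hypothesis. Apply \Cref{thm:connChar} to $G$ itself:
\begin{enumerate}
    \item If $G$ is disconnected with components $G_1, \dots, G_t$, then $G$ is the disjoint union of the $G_i$'s, each of which is constructible by the induction hypothesis; associativity of disjoint union lets us iterate the binary $\Pi$-operation.
    \item If $\overline{G}$ is disconnected, the analogous argument works with join in place of disjoint union.
    \item If $G$ is an extension graph, then $G \in \mathcal{G}$ and we are done with no operation applied.
    \item If $G$ is an $S$-spider with nonempty head $R$ for a unique separable extension graph $S$, then by the definition of $S$-spiders (endpoints completely nonadjacent to $R$, midpoints completely adjacent to $R$) we have $G = \sigma_S(G[R])$; since $G[R]$ is a proper induced subgraph it is constructible by the induction hypothesis, and one additional application of $\sigma_S \in \Sigma$ yields $G$.
\end{enumerate}

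The main obstacle is the fourth case: one must verify that the graph produced by the abstract operation $\sigma_S$ applied to $G[R]$ coincides with $G$ itself. This is where the precise matching between the definition of $S$-spider given before \Cref{thm:connChar} and the definition of $\sigma_S$ given just before \Cref{rem:pComp} is crucial; both specify the adjacencies between $V_S$ and $V_G \setminus V_S = R$ in exactly the same way (midpoints fully adjacent to $R$, endpoints fully nonadjacent to $R$), so the identification is immediate. The uniqueness of $S$ guaranteed by \Cref{thm:connChar} is not strictly needed for the construction, but it ensures that the recursive description is well-defined and gives a canonical tree representation analogous to the cotree.
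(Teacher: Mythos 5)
Your proposal is correct and follows essentially the same route as the paper: the direction 2 $\Rightarrow$ 1 via \Cref{lem:sigma} and closure under join and disjoint union (the observation after \Cref{rem:pComp}), and the direction 1 $\Rightarrow$ 2 by induction on the order of $G$ using the four cases of \Cref{thm:connChar}. The only difference is that you spell out the cases the paper dismisses as ``immediate'' (in particular the identification of an $S$-spider with nonempty head as $\sigma_S(G[R])$), which is a fair expansion rather than a deviation.
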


\begin{proof}
	The fact that 2 implies 1 follows easily from \Cref{lem:sigma}, the
	observation after \Cref{rem:pComp}, and since $\mathcal{G}$ is a subset of
	$P_4$-extendible graphs. For the converse implication we proceed by induction
	on the order of $G$. From \Cref{thm:connChar} we have that, if $G$ is not
	trivial, one of the following cases is satisfied:
	\begin{enumerate}
		\item $G$ is disconnected.

		\item $\overline{G}$ is disconnected.

		\item $G$ is an extension graph.

		\item there is a unique separable extension graph $S$ such that $G =
      \sigma_S(H)$ for some graph $H$.
	\end{enumerate}

	In the first (second) case, $G$ is the disjoint union (join) of two
	$P_4$-extendible graphs $G_1$ and $G_2$, which by induction hypothesis can be
	constructed from $\mathcal{G}$ by a finite sequence of $\Sigma$- and
	$\Pi$-operations, so the result follows in this case. The remaining cases are
  immediate.
\end{proof}

The theorem above shows that $\Gamma(\mathcal{G}, \Sigma, \Pi)$ is precisely the
class of $P_4$-extendible graphs. In addition, considering that $\mathcal{G},
\Sigma$ and $\Pi$ are finite sets, it is easy to justify the following
consequence of \Cref{thm:damasWQO}.

\begin{corollary}
The class of $P_4$-extendible graphs is WQO under the induced subgraph relation.
Equivalently, any hereditary property on $P_4$-extendible graphs admits a finite
forbidden subgraph characterization.
\end{corollary}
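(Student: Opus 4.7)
The plan is to apply \Cref{thm:damasWQO} to the triple $(\mathcal{G}, \Sigma, \Pi)$ just introduced, using the preceding theorem to identify $\Gamma(\mathcal{G}, \Sigma, \Pi)$ with the class of $P_4$-extendible graphs. The second sentence of the corollary is then immediate from the discussion at the beginning of \Cref{sec:wqo}: on a graph class ordered by induced subgraph, being WQO is equivalent to every antichain being finite, which in turn is equivalent to every hereditary property having only finitely many minimal obstructions.

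First I would observe that $\mathcal{G}$, $\Sigma$ and $\Pi$ are all finite sets (of cardinalities $9$, $5$ and $2$, respectively). Consequently, each of $(\mathcal{G},\le)$, $(\Sigma,\preceq)$ and $(\Pi,\preceq)$ is trivially WQO, which takes care of hypotheses~$1$ and~$4$ of \Cref{thm:damasWQO}.

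Next I would verify hypotheses~$2$ and~$3$. For the binary operations in $\Pi$, it is standard and routine that disjoint union and join are commutative, associative, monotonous in each argument, and extensive in the required sense ($G,H\le G+H$ and $G,H\le G\oplus H$). For the unary operations in $\Sigma$, extensivity ($G\le \sigma_S(G)$) is immediate from the definition of $\sigma_S$, since $V_G$ appears as an induced subset of $V_{\sigma_S(G)}$ whose internal edges coincide with $E_G$. For monotonicity, I would argue that if $H\le G$ via an injective homomorphism $\varphi\colon V_H\hookrightarrow V_G$ preserving adjacency and non-adjacency, then extending $\varphi$ by the identity on $V_S$ gives an induced-subgraph embedding $\sigma_S(H)\le \sigma_S(G)$; this works because the edges between $V_S$ and the ``body'' in $\sigma_S(H)$ and in $\sigma_S(G)$ depend only on which vertices of $S$ are midpoints, not on the host graph.

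With all four hypotheses verified, \Cref{thm:damasWQO} yields that $\Gamma(\mathcal{G}, \Sigma, \Pi)$ is WQO under the induced subgraph relation. By the characterization theorem proved just above, this is exactly the class of $P_4$-extendible graphs, giving the first assertion; the second assertion then follows from the equivalence recalled in the opening of this section. Since every step is either immediate from a definition, finite by inspection, or a mechanical embedding argument, I do not anticipate a genuinely difficult step; the only mildly delicate point is the monotonicity of $\sigma_S$, and even there the verification is straightforward once one notices that the adjacencies added by $\sigma_S$ are determined solely by the fixed graph $S$.
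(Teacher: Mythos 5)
Your proposal is correct and follows essentially the same route as the paper: identify $\Gamma(\mathcal{G},\Sigma,\Pi)$ with the class of $P_4$-extendible graphs via the preceding constructive characterization, note that $\mathcal{G}$, $\Sigma$ and $\Pi$ are finite (hence trivially WQO), and apply \Cref{thm:damasWQO}. The paper leaves the verification of hypotheses~2 and~3 implicit as routine, whereas you spell them out (including the monotonicity of $\sigma_S$), which is a harmless elaboration rather than a different argument.
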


The rest of the paper is devoted to the characterizations by forbidden induced
subgraphs of properties associated with polarity in $P_4$-sparse and
$P_4$-extendible graphs. We start with the characterization of minimal unipolar
obstructions on the mentioned graph classes.


\section{Minimal unipolar obstructions}
\label{sec:MinUnipObs}

In this section we provide complete lists of minimal unipolar obstructions which
are $P_4$-sparse or $P_4$-extendible graphs. With that purpose in mind we
introduce some minimal unipolar obstructions that do not necessarily belong to
the mentioned graph classes.

A {\em hole} is a cycle of length at least 5. An {\em antihole} is the
complement of a hole. Holes and antiholes are said to be {\em even} or {\em odd}
accordingly to their order.

\begin{proposition}
\label{prop:someMUO}
The graphs depicted in \Cref{fig:muo} are minimal unipolar obstructions.
\end{proposition}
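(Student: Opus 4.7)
The plan is to verify, for each graph $G$ displayed in \Cref{fig:muo}, the two defining conditions of a minimal unipolar obstruction: first, that $G$ itself admits no unipolar partition $(A, B)$, where $G[A]$ is a clique and $G[B]$ is a cluster, and second, that every proper induced subgraph of $G$ does admit such a partition. Since $A$ must be a clique, we have the a priori bound $|A| \le \omega(G)$, so the relevant candidate clique-sides form a small collection; combined with symmetries of $G$, this will make the case analysis tractable.

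To show that $G$ is not unipolar, I would argue by contradiction. Assuming $(A, B)$ is a unipolar partition, I would enumerate, up to the action of $\mathrm{Aut}(G)$, the cliques of $G$ of size at most $\omega(G)$, and for each such candidate $A$ exhibit an induced $P_3$ inside $G[V_G \setminus A]$, which forbids $G[V_G \setminus A]$ from being a cluster. For holes $C_n$ with $n \ge 5$ this is essentially immediate: such graphs are triangle-free, so $|A| \le 2$, and deleting at most two vertices from a cycle of length at least $5$ always leaves an induced $P_3$. For antiholes $\overline{C_n}$ with $n \ge 7$ one complements the argument; for the remaining small graphs pictured in the figure, an analogous but finer analysis applies, the bulk of the work being the enumeration of cliques up to symmetry.

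To establish minimality, for each vertex $v \in V_G$ I would exhibit an explicit unipolar partition of $G - v$. When $G$ is vertex-transitive (as with holes and antiholes) a single partition suffices, and in general one per vertex orbit of $\mathrm{Aut}(G)$ does the job. A reliable recipe is, given $v$, to take $A$ to be a suitable clique of $G - v$ that meets every induced $P_3$ of $G$ passing through $v$, and then verify that the remaining vertices induce a cluster; this heuristic works because the induced $P_3$'s that obstructed unipolarity of $G$ typically pass through $v$ and get broken by its removal.

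The main obstacle will be the non-unipolarity step for those graphs in \Cref{fig:muo} that contain triangles or larger cliques, since each clique of $G$ yields a candidate for $A$ that must be excluded individually, and there is no single uniform $P_3$ that works for all candidates at once. The number of such candidates remains bounded in each case, so the analysis reduces to locating, for every orbit of candidate cliques $A$, a forbidden $P_3$ inside $V_G \setminus A$; executing this uniformly across all graphs of \Cref{fig:muo} is what makes the argument nontrivial, and is where the automorphism groups of the pictured graphs play an essential role.
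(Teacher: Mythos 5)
Your overall strategy is the same as the paper's: show non-unipolarity by checking that for every clique $K$ of $G$ the graph $G-K$ contains an induced $P_3$ (so it is not a cluster), and establish minimality by exhibiting a unipolar partition of every vertex-deleted subgraph; the paper treats both checks as routine verifications. However, one step of your plan would fail as written. The graphs in \Cref{fig:muo} are $2P_3$, $K_{2,3}$ and the odd antiholes; long holes are not among them (the only hole occurring is $C_5\cong\overline{C_5}$), and, more importantly, your claim that for antiholes ``one complements the argument'' is not legitimate, because unipolarity is not a self-complementary property. Under complementation the bound $|A|\le 2$ coming from triangle-freeness disappears entirely --- cliques of $\overline{C_n}$ are independent sets of $C_n$ and may have up to $(n-1)/2$ vertices for odd $n$ --- and the condition ``$G-A$ contains an induced $P_3$'' becomes, in the complement, ``$C_n-I$ contains an induced $K_2+K_1$,'' which is a different statement that has to be argued on its own.

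The repair is direct rather than by complementation: for any independent set $I$ of $C_n$ with $n\ge 7$ odd, $|I|\le (n-1)/2$ is strictly smaller than the vertex cover number $(n+1)/2$ of $C_n$, so $C_n-I$ retains an edge; since $C_n-I$ has at least $(n+1)/2\ge 4$ vertices and is a disjoint union of paths (or all of $C_n$ when $I=\varnothing$), one finds an edge together with a vertex nonadjacent to both of its ends, i.e., an induced $K_2+K_1$, which is exactly the induced $P_3$ needed in $\overline{C_n}-A$. With this fix, plus the explicit unipolar partitions of the vertex-deleted subgraphs (easy for the antiholes by vertex-transitivity, and immediate for $2P_3$ and $K_{2,3}$), your argument coincides with the paper's proof.
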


\begin{figure}[hb!]
\centering
\begin{tikzpicture}[scale=0.85]

\begin{scope}[scale=1, xshift=-0.5cm, yshift=-1cm]
\node (1) [vertex] at (0,0){};
\node (2) [vertex] at (0,1){};
\node (3) [vertex] at (0,2){};
\node (5) [vertex] at (1,0){};
\node (6) [vertex] at (1,1){};
\node (7) [vertex] at (1,2){};

\node (x) [rectangle] at (0.5,-0.5){$2P_3$};
\end{scope}

\foreach \i in {1,2,5,6}
	\draw let \n1={int(\i+1)} in [edge] (\i) to node [above] {} (\n1);

\begin{scope}[scale=1, xshift=4cm, yshift=0cm]
\foreach \i in {0,...,3}
	\node (\i) [vertex] at ({45+(\i*90)}:1){};

\node (c) [vertex] at (0,0){};

\node (x) [rectangle] at (0,-1.5){$K_{2,3}$};
\end{scope}

\foreach \i in {0,...,3}
	\draw let \n1={int(mod(\i+1,4))} in [edge] (\i) to node [above] {} (\n1);

\draw [edge] (c) to node [above] {} (0);
\draw [edge] (c) to node [above] {} (2);

\begin{scope}[scale=1, xshift=8cm, yshift=0cm]
\foreach \i in {0,...,6}
	\node (\i) [vertex] at ({90+(\i*(360/7))}:1){};

\node (x) [rectangle] at (0,-1.5){odd antiholes};
\end{scope}

\foreach \i in {0,...,6}
	\draw let \n1={int(mod(\i+2,7))} in [edge] (\i) to node [above] {} (\n1);
\foreach \i in {0,...,6}
	\draw let \n1={int(mod(\i+3,7))} in [edge] (\i) to node [above] {} (\n1);

\end{tikzpicture}
\caption{Some minimal unipolar obstructions.}
\label{fig:muo}
\end{figure}
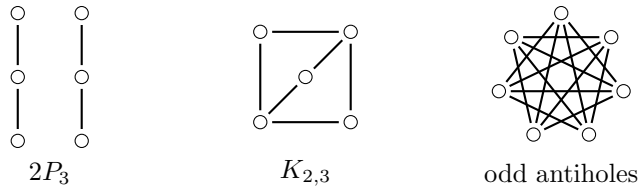

\begin{proof}
  To prove that these graphs are not unipolar, it is enough to observe that for
  any clique $K$, $G-K$ has an induced $P_3$, so $G-K$ is not a cluster. It is
  also easy to verify that any vertex-deleted subgraph of these graphs is a
  unipolar graph, so the result follows.
\end{proof}

The following two lemmas completely characterize minimal unipolar obstructions
$G$ (on general graphs) such that either $G$ or $\overline{G}$ is disconnected.
We use such characterizations as the base for providing complete lists of
minimal unipolar obstructions for cographs, $P_4$-sparse graphs and
$P_4$-extendible graphs.

\begin{lemma}
\label{lem:discMUO}
If $G$ is a graph, then $G$ is a disconnected minimal unipolar obstruction if
and only if $G \cong 2P_3$.
\end{lemma}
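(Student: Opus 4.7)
The plan. The forward ($\Leftarrow$) direction is exactly the content of \Cref{prop:someMUO}, which already establishes that $2P_3$ is a minimal unipolar obstruction. For the converse, my key preliminary observation is a structural description of unipolarity under disjoint union: if $H = H_1 + \cdots + H_k$ is the decomposition of $H$ into connected components, then any clique of $H$ lies inside a single component, so any unipolar partition $(A,B)$ must have $A \subseteq V(H_j)$ for some $j$, and the requirement that $B$ is a cluster then amounts to $V(H_j)\setminus A$ and every $V(H_i)$ with $i\neq j$ being clusters. Consequently, $H$ is unipolar if and only if every component is unipolar and at most one component fails to be a cluster.

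I then apply this characterization to $G$. Since $G$ is a minimal unipolar obstruction, every component $G_i$ is unipolar (being a proper induced subgraph), so the failure of $G$ to be unipolar forces at least two components to be non-clusters, say $G_1$ and $G_2$. Suppose toward a contradiction that $G$ has a third component $G_3$; pick any $v \in V(G_3)$. Then the components of $G - v$ still include the unchanged non-clusters $G_1$ and $G_2$, so $G - v$ remains non-unipolar by the characterization, contradicting the minimality of $G$. Hence $G$ has exactly two components, both of which are non-clusters.

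It remains to identify these components. By minimality, for every $v \in V(G_1)$ the graph $G - v = (G_1 - v) + G_2$ is unipolar; since $G_2$ is a non-cluster, every component of $G_1 - v$ must be a cluster, whence $G_1 - v$ itself is $P_3$-free. So $G_1$ contains an induced $P_3$, yet deleting any vertex destroys every induced $P_3$, which forces every vertex of $G_1$ to lie on every induced $P_3$ of $G_1$; combined with the existence of an induced $P_3$ this gives $|V(G_1)| = 3$, and since $G_1$ is connected and not a cluster, $G_1 \cong P_3$. Symmetrically, $G_2 \cong P_3$, so $G \cong 2P_3$. The main obstacle is just setting up the disjoint-union characterization of unipolarity at the start; once in place, the rest reduces to a short minimality argument together with the trivial observation that $P_3$ is the unique connected non-cluster on three vertices.
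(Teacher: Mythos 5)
Your proof is correct, and its skeleton matches the paper's: both arguments use minimality to conclude that every component of $G$ is unipolar, and that $G$ must have at least two components that are not clusters (the paper says ``not complete graphs,'' which is the same thing for connected components), and both invoke \Cref{prop:someMUO} for the converse direction. The genuine difference is the endgame. The paper, once it knows two components each contain an induced $P_3$, simply notes that $G$ contains the obstruction $2P_3$ as an induced subgraph and concludes $G \cong 2P_3$ at once by minimality (this is the standard shortcut recorded in \Cref{rem:hered}). You never use that shortcut: you first rule out a third component by deleting a vertex from it, and then pin down each of the two remaining components as exactly $P_3$ by showing that deleting any vertex of a component must leave it $P_3$-free. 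Your route is a bit longer, but it buys a self-contained structural statement (a disconnected graph is unipolar iff all components are unipolar and at most one is not a cluster) that makes the role of minimality completely explicit; the paper's route is shorter because it leans on the already-proved fact that $2P_3$ is an obstruction. Both are valid proofs of the lemma.
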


\begin{proof}
  Let $G$ be a disconnected minimal unipolar obstruction. By the minimality of
  $G$, any of its components is a unipolar graph. In consequence, $G$ has at
  least two components which are not complete graphs, otherwise $G$ would be
  unipolar. Then, $G$ has $2P_3$ as an induced subgraph, so $G \cong 2P_3$.
  The converse implication follows from \Cref{prop:someMUO}.
\end{proof}

\begin{lemma}
\label{lem:COdiscMUO}
Let $G$ be a graph.   If $\overline{G}$ is disconnected, then $G$ is a minimal
unipolar obstruction if and only if $G \cong K_{2,3}$.
\end{lemma}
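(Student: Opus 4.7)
The reverse direction follows immediately: \Cref{prop:someMUO} tells us that $K_{2,3}$ is a minimal unipolar obstruction, and a direct computation gives $\overline{K_{2,3}} = K_2 + K_3$, which is disconnected.

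For the forward direction, the plan is to reduce to showing $K_{2,3} \le G$ and then appeal to \Cref{rem:hered}. Since $\overline{G}$ is disconnected, write $G = H_1 \oplus H_2$ with nontrivial $H_1, H_2$; by minimality of $G$, both $H_i$ are unipolar. Because $K_{2,3} = 2K_1 \oplus 3K_1$, it suffices to prove that, up to relabeling the factors, $2K_1 \le H_1$ and $3K_1 \le H_2$.

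The heart of the argument will be the following dichotomy: $G = H_1 \oplus H_2$ is unipolar if and only if either (i) one of the $H_i$ is complete and the other is unipolar, or (ii) both $V(H_i)$ admit a partition into two cliques. I plan to establish this by taking a unipolar partition $(K,C)$ of $G$ and setting $K_i = K \cap V(H_i)$, $C_i = C \cap V(H_i)$: if some $C_i$ is empty, then $H_i$ is complete and the other pair is a unipolar partition of $H_{3-i}$; if both $C_i$ are nonempty, the join forces $G[C]$ to be a single clique, so each $V(H_i)$ splits as the union of the two cliques $K_i$ and $C_i$. Both constructions are clearly reversible. Since $G$ is not unipolar while both $H_i$ are, neither $H_i$ can be complete (so in particular $2K_1 \le H_1$ and $2K_1 \le H_2$), and at least one of them, say $H_1$, is not partitionable into two cliques.

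It remains to upgrade $2K_1 \le H_1$ to $3K_1 \le H_1$. Since $H_1$ is not partitionable into two cliques, $\overline{H_1}$ is not bipartite. If $\overline{H_1}$ contains a triangle, then $3K_1 \le H_1$ and we are done. Otherwise $\overline{H_1}$ is triangle-free and non-bipartite, so a shortest odd cycle in $\overline{H_1}$ is induced and has length at least five, meaning $H_1$ contains an induced odd antihole of order at least five. By \Cref{prop:someMUO} such an antihole is not unipolar, contradicting unipolarity of $H_1$. Hence $3K_1 \le H_1$, yielding $K_{2,3} = 3K_1 \oplus 2K_1 \le H_1 \oplus H_2 = G$, and \Cref{rem:hered} gives $G \cong K_{2,3}$. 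I expect the main technical obstacle to be the clean verification of the dichotomy, in particular showing that when both $C_1$ and $C_2$ are nonempty the join really does force $G[C]$ to be a single clique, and then packaging this into the precise "partition into two cliques" condition on each $H_i$.
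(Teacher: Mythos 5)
Your proof is correct, and it is essentially the paper's argument transported to the join side of the complement, but with two local differences worth pointing out. The paper works directly in $\overline{G}$: it notes $\overline{G}$ cannot be bipartite (else $G$ splits into two cliques), rules out induced odd cycles of length at least five in $\overline{G}$ because the corresponding odd antihole would force $G$ to be that antihole (whose complement is connected, contradicting disconnectedness), hence finds a triangle in one component of $\overline{G}$; then it uses the observation that a minimal unipolar obstruction has no universal vertex to get a $K_2$ in another component, yielding $K_2+K_3\le\overline{G}$, i.e.\ $K_{2,3}\le G$, and concludes by \Cref{rem:hered}. You instead decompose $G=H_1\oplus H_2$, prove a dichotomy characterizing unipolarity of a join, and extract $3K_1\le H_1$ and $2K_1\le H_2$; your exclusion of the triangle-free case places the odd antihole inside the unipolar factor $H_1$ and uses heredity rather than minimality of $G$ plus connectivity of the antihole's complement. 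The two routes are mirror images (complement components of $\overline{G}$ are exactly your join factors), but your join dichotomy is a heavier self-contained lemma replacing the paper's lighter ``two-clique partition'' and ``no universal vertex'' observations, which makes your write-up somewhat longer while making the structural reason for non-unipolarity of a join explicit; the paper's version is shorter. All the steps you flag (the dichotomy when both $C_i$ are nonempty, the induced shortest odd cycle, the use of \Cref{prop:someMUO} and \Cref{rem:hered}) check out, so there is no gap.
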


\begin{proof}
  First, suppose that $G$ is a minimal unipolar obstruction. Notice that
  $\overline{G}$ is not a bipartite graph, or $G$ would admit a partition into
  two cliques, so it would be a unipolar graph, which is impossible. Hence
  $\overline{G}$ contains an odd cycle as an induced subgraph. Moreover, since
  odd antiholes are minimal unipolar obstructions and $\overline{G}$ is
  disconnected, $\overline{G}$ does not contain odd cycles of length greater
  than three as induced subgraphs. Thus, $\overline{G}$ contains a triangle. In
  addition, since minimal unipolar obstructions do not have universal vertices,
  $\overline{G}$ does not have isolated vertices, and any component of
  $\overline{G}$ has order at least two. Therefore, since $\overline{G}$ has at
  least two connected components, it contains $K_2 + K_3$ as an induced
  subgraph, so $G \cong K_{2,3}$. The converse implication follows from
  \Cref{prop:someMUO}.
\end{proof}

Since the complement of any nontrivial connected cograph is a disconnected
cograph we have the following direct consequence of
\Cref{lem:discMUO,lem:COdiscMUO}.

\begin{corollary}
\label{cor:cogMUO}
If $G$ is a cograph, then $G$ is a minimal unipolar obstruction if and only if
$G \cong 2P_3$ or $G \cong K_{2,3}$.
\end{corollary}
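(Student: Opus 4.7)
The plan is to use the hint already provided in the sentence preceding the corollary: by \Cref{theo:cographs}, any nontrivial cograph $G$ satisfies that either $G$ or $\overline{G}$ is disconnected. This dichotomy slots directly into the two previous lemmas, so the corollary becomes a two-case verification rather than an independent argument.

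First, I would handle the forward direction. Assume $G$ is a cograph and a minimal unipolar obstruction. Since the trivial graph $K_1$ is unipolar, $G$ has at least two vertices, so it is a nontrivial cograph; \Cref{theo:cographs}(4) then forces either $G$ or $\overline{G}$ to be disconnected. If $G$ is disconnected, \Cref{lem:discMUO} gives $G \cong 2P_3$. If instead $\overline{G}$ is disconnected, \Cref{lem:COdiscMUO} gives $G \cong K_{2,3}$. This exhausts the possibilities.

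For the converse, I would observe that both $2P_3$ and $K_{2,3}$ are cographs: the graph $2P_3$ is $P_4$-free by inspection, and $K_{2,3} = \overline{K_2} \oplus \overline{K_3}$ is built from trivial graphs via disjoint union and join, hence is a cograph by \Cref{theo:cographs}(2). Combined with \Cref{prop:someMUO}, which guarantees both graphs are minimal unipolar obstructions, the reverse implication follows.

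There is really no substantive obstacle here; the work has been done by \Cref{theo:cographs} together with \Cref{lem:discMUO,lem:COdiscMUO,prop:someMUO}. The only small point to be careful about is the nontriviality reduction (ruling out $K_1$), which is needed before invoking the cograph connectivity dichotomy.
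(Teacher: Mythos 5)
Your proposal is correct and follows exactly the paper's route: the paper also derives the corollary as a direct consequence of \Cref{lem:discMUO,lem:COdiscMUO} via the fact that for a nontrivial cograph either the graph or its complement is disconnected, with minimality of $2P_3$ and $K_{2,3}$ supplied by \Cref{prop:someMUO}. Your write-up merely makes the trivial-graph reduction and the converse's cograph check explicit, which the paper leaves implicit.
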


Now, we use the characterization of $P_4$-sparse graphs given in
\Cref{thm:connCharSparse} to give the explicit list of $P_4$-sparse minimal
unipolar obstructions.

\begin{lemma}
\label{lem:spNoMUO}
If $G = (S, K, R)$ is a spider, then $G$ is a unipolar graph if and only if $R =
\varnothing$ or $G[R]$ is unipolar.
\end{lemma}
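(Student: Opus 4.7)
The plan is to prove the biconditional directly by verifying each implication through straightforward partition constructions, since unipolarity interacts cleanly with the spider decomposition into $(S, K, R)$.

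For the forward direction, I would exploit the fact that unipolarity is hereditary under induced subgraphs: given any unipolar partition $(A, B)$ of $G$, the pair $(A \cap R, B \cap R)$ partitions $V_{G[R]}$, and $A \cap R$ is still a clique (as a subset of $A$) while $B \cap R$ is still a cluster (each clique-component of $B$ intersected with $R$ remains a clique, and the nonadjacencies between these components are preserved). Thus if $R \neq \varnothing$, the graph $G[R]$ inherits a unipolar partition from $G$, which is exactly the conclusion needed.

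For the backward direction, I would explicitly build a unipolar partition of $G$. If $R = \varnothing$, then $G$ is a headless spider, and since every $s \in S$ has $N(s) \subseteq K$ (by the thin or thick spider definition), $S$ is independent; the pair $(K, S)$ is then a unipolar partition because $K$ is a clique and $S$ is a cluster of singletons. If instead $G[R]$ is unipolar with partition $(A', B')$, I would set $A := K \cup A'$ and $B := S \cup B'$. The set $A$ is a clique: both $K$ and $A'$ are cliques, and $R$ (hence $A'$) is completely adjacent to $K$ by the spider structure. The set $B$ is a cluster: $B'$ is a cluster, $S$ is independent, and $S$ is completely nonadjacent to $R$ (and hence to $B'$), so the cliques of $B'$ together with the singletons of $S$ form a disjoint union of cliques.

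No real obstacle is expected here; the lemma is essentially a structural bookkeeping statement, and the only point requiring care is confirming from the definition of a headless spider that $S$ is always an independent set in both the thin and thick cases, which is immediate since the neighborhood of each leg is contained in $K$.
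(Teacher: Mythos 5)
Your proof is correct and follows essentially the same route as the paper: the forward direction is just the heredity of unipolarity (which you verify explicitly rather than invoke), and the backward direction uses the same partition $(K \cup A', S \cup B')$, with the $R = \varnothing$ case handled by the observation that $(K, S)$ itself is a unipolar partition.
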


\begin{proof}
  Since unipolarity is a hereditary property, we have that $G[R]$ is unipolar
  whenever $G$ is. Conversely, for any unipolar partition $(A, B)$ of $G[R]$,
  $(K \cup A, S \cup B)$ is a unipolar partition of $G$.
\end{proof}

\begin{corollary}
\label{cor:spMUO}
If $G$ is a $P_4$-sparse graph, then $G$ is a minimal unipolar obstruction if
and only if $G \cong 2P_3$ or $G \cong K_{2,3}$. In consequence, any
$P_4$-sparse minimal unipolar obstruction is a cograph.
\end{corollary}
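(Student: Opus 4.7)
The plan is to apply the connectedness characterization of $P_4$-sparse graphs (Theorem \ref{thm:connCharSparse}) to a $P_4$-sparse minimal unipolar obstruction $G$. This splits the analysis into three exclusive cases: $G$ is disconnected, $\overline{G}$ is disconnected, or $G$ is a spider $(S, K, R)$. The first two cases are dispatched immediately by Lemmas \ref{lem:discMUO} and \ref{lem:COdiscMUO}, yielding $G \cong 2P_3$ and $G \cong K_{2,3}$, respectively, with no further argument required.

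The interesting case is when $G$ is a spider, and my plan is to rule it out entirely using Lemma \ref{lem:spNoMUO}. If the head $R$ is nonempty, then $G[R]$ is a proper induced subgraph of $G$, hence unipolar by minimality of $G$; Lemma \ref{lem:spNoMUO} would then force $G$ itself to be unipolar, a contradiction. If $R$ is empty, then $G$ coincides with its body-and-legs part, i.e.\ a headless spider, which by its defining split partition $(S, K)$ is a split graph and therefore unipolar; again a contradiction. So no spider can be a minimal unipolar obstruction.

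The converse direction is immediate from Proposition \ref{prop:someMUO}, which already asserts that both $2P_3$ and $K_{2,3}$ are minimal unipolar obstructions. For the final ``in consequence'' clause, I will simply observe that $2P_3$ (a disjoint union of two $P_3$'s) and $K_{2,3}$ (a join of $\overline{K_2}$ and $\overline{K_3}$) are cographs; in fact, the list obtained here matches exactly the list of cograph minimal unipolar obstructions given in Corollary \ref{cor:cogMUO}, so the consequence follows without additional work.

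The main obstacle is very mild: it lies in the $R = \varnothing$ subcase of the spider analysis, since Lemma \ref{lem:spNoMUO} explicitly allows the alternative $R = \varnothing$ without itself guaranteeing unipolarity. The resolution is to invoke the definition of a headless spider, which is given as carrying a split partition, and to note that every split graph is trivially unipolar.
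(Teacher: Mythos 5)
Your proposal is correct and follows essentially the same route as the paper: apply \Cref{thm:connCharSparse} to split into the disconnected, co-disconnected, and spider cases, dispatch the first two by \Cref{lem:discMUO,lem:COdiscMUO}, rule out spiders via \Cref{lem:spNoMUO} together with minimality, and get the converse from \Cref{prop:someMUO}. Your only (harmless) deviation is the worry about the $R=\varnothing$ subcase: since \Cref{lem:spNoMUO} is an ``if and only if,'' the disjunct $R=\varnothing$ already yields unipolarity of the headless spider, so your extra appeal to split graphs merely re-proves what the lemma provides.
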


\begin{proof}
  The first statement follows from
  \Cref{lem:discMUO,lem:COdiscMUO,lem:spNoMUO}, since we have by
  \Cref{thm:connCharSparse} that any connected $P_4$-sparse graph with connected
  complement is a spider. The second statement follows directly from
  \Cref{cor:cogMUO}.
\end{proof}

We end this section by proving a result analogous to \Cref{cor:spMUO} for
$P_4$-extendible graphs. Notice that, for any extension graph but $P$, its
midpoints are a clique while its endpoints induce a cluster (see
\Cref{fig:extSets}). Then, the proof of the following proposition is exactly the
same as the proof of \Cref{lem:spNoMUO}. The case of $P$-spiders is covered in
\Cref{lem:extMUO2}.

\begin{lemma}
\label{lem:extMUO1}
Let $H \in \{P_4, \overline{P}, F, \overline{F}\}$.   If $G = (S, K, R)$ is an
$H$-spider, then $G$ is a unipolar graph if and only if $R = \varnothing$ or
$G[R]$ is unipolar.
\end{lemma}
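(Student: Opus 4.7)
The plan is to mirror the proof of Lemma \ref{lem:spNoMUO} almost verbatim. The forward direction is automatic since unipolarity is hereditary: if $G$ is unipolar and $R \ne \varnothing$, then $G[R] \le G$ is unipolar as well. For the reverse direction, given a unipolar partition $(A, B)$ of $G[R]$ (with $A = B = \varnothing$ if $R = \varnothing$), I would propose $(K \cup A,\, S \cup B)$ as a unipolar partition of $G$ and then verify that $K \cup A$ is a clique and that $S \cup B$ is a cluster.

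Both verifications hinge on a single structural observation about the four admissible graphs, which plays the role the headless spider structure played in Lemma \ref{lem:spNoMUO}: for each $H \in \{P_4, \overline{P}, F, \overline{F}\}$, the midpoints of $H$ induce a clique and the endpoints of $H$ induce a cluster. This is a direct inspection of \Cref{fig:extSets} (the black vertices are the midpoints). In $P_4$ the two midpoints are adjacent while the two endpoints are non-adjacent; in $\overline{P}$ the midpoints are adjacent and the three endpoints induce $K_2 + K_1$; in $F$ the midpoints are adjacent and the three endpoints are independent; and in $\overline{F}$ the three midpoints form a triangle while the two endpoints are non-adjacent. Note that $P$ is excluded from the list precisely because its two midpoints are non-adjacent, which is why $P$-spiders require the separate treatment announced for Lemma \ref{lem:extMUO2}.

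With this observation in hand, the rest of the argument writes itself. Since $K$ is a clique, $A \subseteq R$ is a clique, and $R$ is completely adjacent to $K$ by the definition of an $H$-spider, $K \cup A$ is a clique. Dually, $S$ is a cluster, $B \subseteq R$ is a cluster, and $S$ is completely nonadjacent to $R$, so any induced $P_3$ in $G[S \cup B]$ would be forced to lie entirely inside $S$ or entirely inside $B$—both impossible. Hence $(K \cup A,\, S \cup B)$ is a unipolar partition of $G$. The only real work, and thus the only potential obstacle, is the finite midpoint/endpoint case check for the four extension graphs; once that is recorded, everything else is identical to the spider case.
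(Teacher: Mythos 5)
Your proposal is correct and matches the paper's argument: the paper likewise notes that for every separable extension graph except $P$ the midpoints form a clique and the endpoints induce a cluster, and then declares the proof identical to that of \Cref{lem:spNoMUO}, i.e.\ the forward direction by heredity and the reverse direction via the partition $(K \cup A,\, S \cup B)$. Your explicit case check of the four graphs and the verification that this partition is unipolar are exactly the details the paper leaves implicit.
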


\begin{lemma}
\label{lem:extMUO2}
If $G = (S, K, R)$ is a $P$-spider, then $G$ is a unipolar graph if and only if
either $R$ is an empty set or a clique. In consequence, if $G$ is a $P$-spider,
then it is not a minimal unipolar obstruction.
\end{lemma}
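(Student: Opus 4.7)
The plan is to prove the biconditional directly and then deduce the final consequence. The structural feature I will lean on is that in the banner graph $P$, the body $K$ (the three midpoints of $P$) induces a $P_3$ rather than a clique, while the two endpoints in $S$ are nonadjacent, with one of them adjacent to the two leaves of the $P_3$ on $K$ and the other adjacent only to its centre. This non-clique behaviour of $K$ is precisely what makes $P$-spiders behave differently from the spiders handled in Lemma \ref{lem:extMUO1}.

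For the direction where $R$ is empty or a clique, I would exhibit the explicit unipolar partition $A = R \cup \{k_1, k_2\}$ and $B = V_G \setminus A$, where $k_1 k_2$ is any edge of $K$ (for instance the centre of the $P_3$ on $K$ together with one of its leaves). The edge $k_1 k_2$, the fact that $R$ is a clique, and the complete adjacency between $R$ and $K$ together make $A$ a clique, while $G[B]$ consists of a single edge (from the endpoint-midpoint adjacencies of $P$) and one isolated vertex, hence it is a cluster.

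For the converse, assume $r_1, r_2 \in R$ are nonadjacent, and let $(A, B)$ be a hypothetical unipolar partition. The clique condition on $A$ rules out $\{r_1, r_2\} \subseteq A$. If $\{r_1, r_2\} \subseteq B$, then any $k \in K \cap B$ would yield an induced $P_3$ $r_1$-$k$-$r_2$ inside the cluster $B$, forcing $K \subseteq A$ and contradicting that $K$ is not a clique. In the remaining case, say $r_1 \in A$ and $r_2 \in B$; since $r_1$ is nonadjacent to every endpoint, $S \subseteq B$, and the same $P_3$ trick (using $r_2$, a midpoint $k \in K \cap B$, and an endpoint in $S$ adjacent to $k$, which exists because every midpoint of $P$ has a neighbour in $S$) again forces $K \subseteq A$, a contradiction.

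To conclude, I would split on whether $G$ is unipolar: if so, it is not an obstruction at all; otherwise the first part supplies a nonadjacent pair $r_1, r_2 \in R$, and the two leaves of the $P_3$ induced by $K$, together with the unique endpoint of $P$ adjacent to both of them and with $r_1, r_2$, induce a copy of $K_{2,3}$. Since any $P$-spider with $|R| \geq 2$ has at least $7$ vertices, this $K_{2,3}$ is a proper induced subgraph of $G$; as $K_{2,3}$ is a minimal unipolar obstruction by Proposition \ref{prop:someMUO}, $G$ cannot be minimal. I expect the most delicate step to be the last one in the converse, where the asymmetric endpoint-midpoint adjacency of $P$ must be used carefully so that the $P_3$ producing the contradiction actually exists.
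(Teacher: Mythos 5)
Your proof is correct and is essentially the paper's argument: the same unipolar partition $R \cup \{\text{centre},\text{leaf of the }P_3\text{ on }K\}$ for the ``if'' direction, and the same induced $K_{2,3}$ (the two leaves of $K$ together with their common endpoint neighbour and two nonadjacent vertices of $R$) for the non-minimality consequence. The only deviation is that you establish the ``only if'' direction by a direct case analysis of a hypothetical unipolar partition, whereas the paper deduces it at once from that induced $K_{2,3}$ and heredity --- a harmless redundancy, since you exhibit the same $K_{2,3}$ later anyway.
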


\begin{proof}
  Let $w$ be the only vertex of $G[S \cup K]$ of degree 2 which is not adjacent
  to a vertex of degree three, and let $u$ and $v$ be its neighbors. If $R$ has
  two nonadjacent vertices $x$ and $y$, then $G[\{u, v, w, x, y\}]$ is
  isomorphic to $K_{2,3}$. Therefore, if $G$ is a unipolar graph, then $R =
  \varnothing$ or $R$ is a clique. Conversely, if $R$ is a clique and $z$ is the
  only vertex of $G[S \cup K]$ of degree three, then $(R \cup \{z, u\}, (S \cup
  K) \setminus \{z, u\})$ is a unipolar partition of $G$. Hence, if $G$ is not a
  unipolar graph, $R$ contains two nonadjacent vertices and $G$ properly
  contains $K_{2,3}$, so $G$ is not a minimal unipolar obstruction.
\end{proof}

\begin{corollary}
Let $G$ be a $P_4$-extendible graph.  Then, $G$ is a minimal unipolar
obstruction if and only if $G\in \{ 2P_3, K_{2,3}, C_5\}$.
\end{corollary}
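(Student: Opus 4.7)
The plan is to prove both implications using the structural characterization of $P_4$-extendible graphs from \Cref{thm:connChar}, together with \Cref{lem:discMUO,lem:COdiscMUO,lem:extMUO1,lem:extMUO2}. First I would dispatch the forward implication by showing that each of $2P_3$, $K_{2,3}$, and $C_5$ is simultaneously $P_4$-extendible and a minimal unipolar obstruction. For $2P_3$ and $K_{2,3}$, the latter property is already supplied by \Cref{lem:discMUO,lem:COdiscMUO}, and $P_4$-extendibility is immediate: $2P_3$ is $P_4$-free, while $K_{2,3}$ has only five vertices, so the complement of any induced $P_4$ in it contains a single vertex and the defining condition of $P_4$-extendibility is vacuous. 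For $C_5$, its maximum clique has size two, so deleting a clique from $V_{C_5}$ leaves at least three consecutive vertices inducing a $P_3$; hence $C_5$ is not unipolar, while every vertex-deleted subgraph is isomorphic to $P_4$ and therefore unipolar. Finally, $C_5$ is an extension graph and therefore $P_4$-extendible.

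For the converse, let $G$ be a $P_4$-extendible minimal unipolar obstruction and apply \Cref{thm:connChar} to $G$. If $G$ (respectively $\overline{G}$) is disconnected, then \Cref{lem:discMUO} (respectively \Cref{lem:COdiscMUO}) forces $G \cong 2P_3$ (respectively $G \cong K_{2,3}$). If $G$ is an $S$-spider with nonempty head for a unique separable extension graph $S$, then I would split according to the type of $S$: when $S \in \{P_4, \overline{P}, F, \overline{F}\}$, \Cref{lem:extMUO1} combined with the minimality of $G$ yields a contradiction, since $G[R]$ would then be a proper induced subgraph that fails to be unipolar; and when $S = P$, \Cref{lem:extMUO2} states outright that $G$ cannot be a minimal unipolar obstruction.

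The main obstacle, and the step I expect to require the most work, is the remaining possibility that $G$ is itself one of the eight extension graphs of \Cref{fig:extSets}. Here I would rule out all of them except $C_5$ by direct inspection: the graph $P_4$ is a cograph and thus excluded by \Cref{cor:cogMUO}, and for each of $P_5$, $\overline{P_5}$, $P$, $\overline{P}$, $F$, and $\overline{F}$ I would exhibit an explicit unipolar partition. For instance, in $P_5 = v_1 v_2 v_3 v_4 v_5$ the partition $(\{v_3\}, \{v_1, v_2, v_4, v_5\})$ works since the cluster part induces $2K_2$, and analogous small partitions can be read off from each of the remaining five extension graphs using their triangles or interior vertices as the clique part. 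Since only $C_5$ survives this sweep, $G$ must be one of the three graphs listed in the statement.
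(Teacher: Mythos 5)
Your proposal is correct and follows essentially the same route as the paper: it applies \Cref{thm:connChar} to split into the disconnected, co-disconnected, extension-graph, and $S$-spider cases, invoking \Cref{lem:discMUO,lem:COdiscMUO,lem:extMUO1,lem:extMUO2} exactly as the paper does. The only difference is that you spell out explicitly the verification (left as "easy to verify" in the paper) that $C_5$ is the unique extension graph that is a minimal unipolar obstruction, which is a fine and correct elaboration.
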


\begin{proof}
  We have from \Cref{thm:connChar} that any connected $P_4$-extendible graph
  with connected complement is either an extension graph, or an $S$-spider for
  some separable extension graph $S$. It is easy to verify that the only
  extension graph which is a minimal unipolar obstruction is $C_5$, so the
  result follows from \Cref{lem:discMUO,lem:COdiscMUO,lem:extMUO1,lem:extMUO2}.
\end{proof}

Now, we study $(s,1)$-polarity, monopolarity and polarity in $P_4$-sparse and
$P_4$-extendible graphs. We start by giving complete lists of disconnected
minimal $(s,1)$-polar obstructions for general graphs in \Cref{sec:discM(s1)PO},
followed by some technical results in \Cref{sec:connectedM(s1)PO} directed to
provide complete lists of $P_4$-sparse and $P_4$-extendible minimal
$(s,1)$- $(\infty, 1)$-, and $(\infty, \infty)$-polar obstructions in
\Cref{sec:main}.


\section{Disconnected minimal $(s,1)$-polar obstructions}
\label{sec:discM(s1)PO}

The following five lemmas completely characterize disconnected minimal
$(s,1)$-polar obstructions for general graphs. They are simple generalizations
of Lemmas 2 to 5 from \cite{contrerasDAM281}, so we will only sketch the proofs.

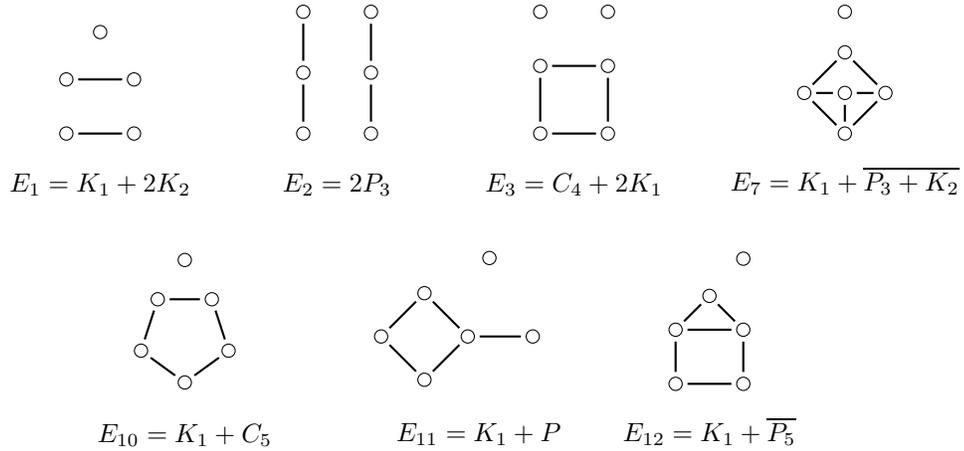
\begin{figure}[ht!]
\centering
\begin{tikzpicture}
\begin{scope}[scale=0.9]

\begin{scope}[scale=1]
\node [vertex] (1) at (0,0)[]{};
\node [vertex] (2) at (1,0)[]{};
\node [vertex] (3) at (0,0.8)[]{};
\node [vertex] (4) at (1,0.8)[]{};
\node [vertex] (5) at (0.5,1.5)[]{};

\foreach \from/\to in {1/2,3/4}
  \draw [edge] (\from) to (\to);

\node (1) at (0.5,-0.75){$E_1 = K_1+2K_2$};
\end{scope}

\begin{scope}[xshift=3.5cm, scale=1]
\node [vertex] (1) at (0,0)[]{};
\node [vertex] (2) at (0,0.9)[]{};
\node [vertex] (3) at (0,1.8)[]{};
\node [vertex] (4) at (1,0)[]{};
\node [vertex] (5) at (1,0.9)[]{};
\node [vertex] (6) at (1,1.8)[]{};

\foreach \from/\to in {1/2,2/3,4/5,5/6}
  \draw [edge] (\from) to (\to);

\node (1) at (0.5,-0.75){$E_2 = 2P_3$};
\end{scope}

\begin{scope}[xshift=7cm, scale=1]
\node [vertex] (1) at (0,0)[]{};
\node [vertex] (2) at (1,0)[]{};
\node [vertex] (3) at (0,1)[]{};
\node [vertex] (4) at (1,1)[]{};
\node [vertex] (5) at (0,1.8)[]{};
\node [vertex] (6) at (1,1.8)[]{};

\foreach \from/\to in {1/2,1/3,3/4,2/4}
  \draw [edge] (\from) to (\to);

\node (1) at (0.5,-0.75){$E_3 = C_4+2K_1$};
\end{scope}

\begin{scope}[xshift=11cm, scale=1]
\node [vertex] (1) at (0.5,0)[]{};
\node [vertex] (2) at (-0.1,0.6)[]{};
\node [vertex] (3) at (0.5,1.2)[]{};
\node [vertex] (4) at (1.1,0.6)[]{};
\node [vertex] (5) at (0.5,0.6)[]{};
\node [vertex] (6) at (0.5,1.8)[]{};

\foreach \from/\to in {1/2,2/3,3/4,4/1,1/5,2/5,4/5}
  \draw [edge] (\from) to (\to);

\node (1) at (0.5,-0.7){$E_7 = K_1+\overline{P_3+K_2}$};
\end{scope}

\begin{scope}[xshift=1.75cm, yshift=-3cm, scale=0.85]
\node [vertex] (1) at (126:0.8)[]{};
\node [vertex] (2) at (54:0.8)[]{};
\node [vertex] (3) at (342:0.8)[]{};
\node [vertex] (4) at (270:0.8)[]{};
\node [vertex] (5) at (198:0.8)[]{};
\node [vertex] (6) at (90:1.33)[]{};

\foreach \from/\to in {1/2,2/3,3/4,4/5,5/1}
  \draw [edge] (\from) to (\to);

\node (1) at (0,-1.72){$E_{10} = K_1+C_5$};
\end{scope}

\begin{scope}[xshift=6.25cm, yshift=-3cm, scale=0.8]
\node [vertex] (1) at (-2,0)[]{};
\node [vertex] (2) at (-1.2,0.8)[]{};
\node [vertex] (3) at (-0.4,0)[]{};
\node [vertex] (4) at (-1.2,-0.8)[]{};
\node [vertex] (5) at (0.8,0)[]{};
\node [vertex] (6) at (0,1.45)[]{};

\foreach \from/\to in {1/2,2/3,3/4,4/1,3/5}
  \draw [edge] (\from) to (\to);

\node (1) at (-0.2,-1.8){$E_{11} = K_1+P$};
\end{scope}

\begin{scope}[xshift=9cm, yshift=-2.7cm, scale=1]
\node [vertex] (1) at (0,-1)[]{};
\node [vertex] (2) at (0,-0.2)[]{};
\node [vertex] (3) at (1,-0.2)[]{};
\node [vertex] (4) at (1,-1)[]{};
\node [vertex] (5) at (0.5,0.3)[]{};
\node [vertex] (6) at (1,0.85)[]{};

\foreach \from/\to in {1/2,2/3,3/4,4/1,2/5,5/3}
  \draw [edge] (\from) to (\to);

\node (1) at (0.5,-1.7){$E_{12} = K_1+\overline{P_5}$};
\end{scope}

\end{scope}
\end{tikzpicture}
\caption{Some minimal $(\infty,1)$-polar obstructions.}
\label{fig:essentials}
\end{figure}

\begin{lemma}
\label{lem:discMonopObsmin}
The seven graphs depicted in \Cref{fig:essentials} are minimal $(s,1)$-polar
obstructions for every integer $s$, $s\ge 2$. Hence, these graphs are minimal
$(\infty, 1)$-polar obstructions.
\end{lemma}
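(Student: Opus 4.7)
The plan is to verify, for each of the seven graphs $E$ in Figure~\ref{fig:essentials}, the two conditions that together imply minimality for every $s \geq 2$: first, $E$ is not $(\infty, 1)$-polar (so it fails $(s,1)$-polarity for every finite $s$), and second, every proper induced subgraph $E - v$ is $(2, 1)$-polar (so each $E - v$ is $(s, 1)$-polar for every $s \geq 2$). These two facts yield the claim for every $s \geq 2$ simultaneously, as well as the final sentence about $(\infty, 1)$-polar obstructions.

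For the non-polarity step, I would argue by contradiction from a hypothetical partition $(A, B)$ of $V(E_i)$. Each $E_i$ is a disjoint union of two components, so the clique $B$ lies entirely within one component; I enumerate the (few) cliques of that component and, in each case, exhibit an induced $\overline{P_3}$ inside $E_i - B$. Since complete multipartite graphs are precisely the $\overline{P_3}$-free graphs, this shows that $A = V(E_i) \setminus B$ cannot be complete multipartite, contradicting the hypothesis. The isolated-vertex component of $E_1, E_7, E_{10}, E_{11}, E_{12}$ streamlines the analysis: if the isolated vertex $v$ lies in $A$, any edge remaining in the other component together with $v$ already forms an induced $\overline{P_3}$ in $A$; if $v \in B$, then $B = \{v\}$ and it suffices to inspect the other component directly and argue that it still contains $\overline{P_3}$ as an induced subgraph.

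For the minimality step, I would display an explicit $(2, 1)$-polar partition of each vertex-deletion $E - v$. In the five graphs of the form $K_1 + H$, removing the isolated vertex leaves $H$, which is small enough to be partitioned into a complete bipartite (or independent) part $A$ and a clique $B$ by inspection; removing a vertex from $H$ instead yields a graph of the form $K_1 + H'$ for which a similar ad~hoc partition, typically with $B$ a single edge or vertex, works. The remaining graphs $E_2 = 2P_3$ and $E_3 = C_4 + 2K_1$ are handled by a direct check of each of their six vertex deletions.

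The main obstacle, as the authors' own remark signals, is not conceptual but bookkeeping: the six-vertex cases $E_{10}, E_{11}, E_{12}$ lie outside the cograph world of~\cite{contrerasDAM281}, so one must enumerate cliques and induced $\overline{P_3}$'s in their components $C_5$, $P$ (banner), and $\overline{P_5}$ (house) by hand. Nevertheless, the conceptual engine — invoking the characterization of complete multipartite graphs as $\overline{P_3}$-free graphs — is identical to that in the cograph proofs being generalized, so the full write-up is essentially routine.
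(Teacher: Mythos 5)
Your proposal is correct and matches the paper's own (deliberately terse) argument: the paper likewise verifies that for every maximal clique $K$ the graph $G-K$ contains an induced $\overline{P_3}$, and that every vertex-deleted subgraph $G-v$ is $(2,1)$-polar, which together give minimality for all $s \ge 2$ and the $(\infty,1)$ statement at once. The only slip is cosmetic: $E_1$ and $E_3$ have three components rather than two, but your argument only uses that any clique of the cluster side lies inside a single component, so nothing breaks.
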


\begin{proof}
	It is routine to verify that, for each graph $G$ in \Cref{fig:essentials}, the
	following assertions are satisfied:  For any maximal clique $K$, $G-K$
	contains an induced $\overline{P_3}$, and for any vertex $v$, $G-v$ is a
	$(2,1)$-polar graph. The result follows easily from here.
\end{proof}

In \cite{contrerasDAM281}, a proof of the following two lemmas restricted to the
family of cographs was given. A minor change in such a proof brings us the more
general results that we state here.

\begin{lemma}
\label{lem:twoComponentsA}
Let $s$ be an integer, $s\ge 2$. Every minimal $(s, 1)$-polar obstruction
different from $K_1 + 2K_2$ and $2K_1 + C_4$ has at most two connected
components.
\end{lemma}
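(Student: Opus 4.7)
The plan is to pin down the shape of an $(s,1)$-polar partition $(A,B)$ on a graph with several components. Since $B$ is a clique, $B$ lies inside a single component of $G$; and if $A$ meets two or more components, then $G[A]$ is disconnected. Because a complete multipartite graph on at least two parts is connected, this forces $G[A]$ to be edgeless (a single independent part). This structural observation will be used throughout; I split the argument by the number of components of $G$ that contain at least one edge.

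Suppose first that at least two components of $G$ have an edge. Pick edges $a_1b_1$ and $a_2b_2$ from two such components and any vertex $x$ from a third component. The five vertices $\{x,a_1,b_1,a_2,b_2\}$ induce $K_1+2K_2$, which by \Cref{lem:discMonopObsmin} is a minimal $(s,1)$-polar obstruction; minimality of $G$ then forces $G \cong K_1+2K_2$.

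Otherwise at most one component of $G$ contains an edge, and since $G$ is not $(s,1)$-polar it cannot be edgeless. Hence exactly one component $C$ carries all the edges and $G = C + kK_1$ with $k \ge 2$. The structural observation forces every $(s,1)$-polar partition of $G$ to satisfy $B \subseteq C$ with $A$ edgeless, so $G$ is $(s,1)$-polar iff $C$ is split; since $G$ is an obstruction, $C$ is not split. Applying the same analysis to $G - u = C + (k-1)K_1$ for an isolated vertex $u$ shows that whenever $k \ge 3$, $G - u$ is also not $(s,1)$-polar, contradicting minimality; hence $k = 2$. For $G - u = C + K_1$ the additional option $B = \{u\}$, $A = C$ becomes available, so by minimality $C$ must be complete multipartite with at most $s$ parts. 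Together with the requirement that $C$ is not split (forcing at least two parts of size $\ge 2$) and the minimality requirement that $C - v$ be split for every $v \in C$, a direct part-size count pins down $C \cong K_{2,2} = C_4$, giving $G \cong 2K_1 + C_4$.

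The main difficulty will be this last step, where the bookkeeping of part sizes together with the bound $s \ge 2$ has to exclude every complete multipartite graph other than $C_4$. The rest reduces to straightforward case analysis on $(A,B)$ guided by the initial structural observation.
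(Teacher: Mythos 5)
Your proof is correct, but it takes a genuinely different route from the paper's. The paper's argument is three lines long: a minimal $(s,1)$-polar obstruction cannot be split (split graphs are $(1,1)$-polar), hence it contains an induced $2K_2$, $C_4$ or $C_5$ by the Foldes--Hammer characterization; if it had three or more components, adding vertices from the untouched components would yield an induced $K_1+2K_2$, $2K_1+C_4$ or $K_1+C_5$, each of which is a minimal obstruction by \Cref{lem:discMonopObsmin}, so \Cref{rem:hered} forces $G$ to be one of these three, and only the first two have more than two components. You instead analyse directly how an $(s,1)$-polar partition $(A,B)$ can sit inside a disconnected graph (the clique $B$ lives in one component, and $A$ must be edgeless once it meets two components), split on the number of edge-carrying components, dispatch the two-edge-component case by citing only the minimality of $K_1+2K_2$, and in the remaining case re-derive $2K_1+C_4$ from scratch via the reduction ``$C+mK_1$ with $m\ge 2$ is $(s,1)$-polar iff $C$ is split'' and a part-size count on a complete multipartite $C$ (which checks out: not split forces two parts of size at least $2$, and vertex-deleted splitness forces exactly the parts $2,2$). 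What the paper's route buys is brevity and reuse of the catalogue in \Cref{fig:essentials}; what yours buys is self-containedness --- you need neither the $\{2K_2,C_4,C_5\}$ characterization of split graphs nor the minimality of $2K_1+C_4$ and $K_1+C_5$ as obstructions --- at the cost of length and of partially duplicating structural work that the paper's subsequent lemmas (\Cref{lem:twoComponentsB,lem:H+K1}) do anyway. One presentational remark: state explicitly at the outset that $G$ is assumed to be a minimal obstruction with at least three components; you use this silently when you invoke ``a third component'' and when you write $G=C+kK_1$ with $k\ge 2$.
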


\begin{proof}
	Let $G$ be a minimal $(s,1)$-polar obstruction different from the graphs
	depicted in \Cref{fig:essentials}. Assume for a contradiction that $G$ has
	at least three connected components. Since $s \ge 2$, we have that $G$ is
	not a split graph, so it contains $2K_2, C_4$ or $C_5$ as an induced
	subgraph. Having at least three connected components, $G$ contains some of
	$K_1 + 2K_2, 2K_1 + C_4$ or $K_1 + C_5$ as an induced subgraph. This results
	in a contradiction, because these graphs are minimal $(s,1)$-polar
	obstructions. Thus, $G$ has at most two components.
\end{proof}

\begin{lemma}[\cite{contrerasDAM281}]
\label{lem:twoComponentsB}
Let $s$ be an integer, $s\ge 2$. If a minimal $(s,1)$-polar obstruction $G$
distinct to the graphs depicted in \Cref{fig:essentials} has two connected
components and it is not $2K_{s+1}$, then $G \cong K_r + H$, where $r \in
\{1,2\}$ and $H$ is a connected graph that is not a complete $s$-partite graph.
\end{lemma}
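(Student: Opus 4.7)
The plan is to proceed by contradiction. Let $G = G_1 + G_2$ be a two-component minimal $(s,1)$-polar obstruction distinct from both $2K_{s+1}$ and the graphs of \Cref{fig:essentials}; I will show that if neither component is $K_1$ nor $K_2$, then $G$ contains one of $2P_3$ or $2K_{s+1}$ as an induced subgraph, contradicting minimality. The first preparatory step is to classify $(s,1)$-polar partitions of an arbitrary disconnected graph.

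First I would establish the following structural dichotomy for any $(s,1)$-polar partition $(A,B)$ of $G_1 + G_2$: since $B$ is a clique and $G$ is disconnected, $B \subseteq V_{G_i}$ for some $i$; say $i=1$, so that $V_{G_2} \subseteq A$. A short analysis of the complete multipartite structure of $G[A]$ shows that either (I) $A = V_{G_2}$ and $B = V_{G_1}$, in which case $G_2$ is complete $s$-partite and $G_1$ is a clique, or (II) $A \cap V_{G_1} \neq \varnothing$, which forces every vertex of $V_{G_2}$ to share a part with some $u \in A \cap V_{G_1}$; in particular $V_{G_2}$ lies in a single (independent) part, so $G_2$ is edgeless, hence $G_2 = K_1$, and $(A \cap V_{G_1},\, B)$ is a split partition of $G_1$.

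The core step is to rule out components that are cliques of size at least three. Suppose $G_1 = K_r$ with $r \geq 3$. For any $v \in V_{G_1}$, the graph $G - v = K_{r-1} + G_2$ is $(s,1)$-polar by minimality. Applying the dichotomy to $K_{r-1} + G_2$, case (II) would force $G_2 = K_1$ (as $K_{r-1} \neq K_1$), but then $G = K_r + K_1$ would itself be $(s,1)$-polar, contradicting that $G$ is an obstruction. Hence case (I) holds: either $G_2$ is complete $s$-partite---from which $(V_{G_2}, V_{G_1})$ becomes an $(s,1)$-polar partition of $G$, a contradiction---or $G_2$ is a clique $K_m$ with $r-1 \leq s$. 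In the latter case, $G = K_r + K_m$ fails to be $(s,1)$-polar only if both $r, m \geq s+1$; combined with $r \leq s+1$, this forces $r = s+1$ and $m \geq s+1$, so $2K_{s+1} \leq G$, and minimality yields $G \cong 2K_{s+1}$, against hypothesis. A symmetric argument rules out $G_2 = K_r$ with $r \geq 3$.

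To finish, suppose both components are non-cliques. Since each is connected, of order at least three, and not a clique, each contains an induced $P_3$; so $G \supseteq 2P_3 = E_2$, and minimality forces $G = E_2$, again contradicting the hypothesis. Therefore at least one component, say $G_1$, is a clique, and by the previous paragraph $G_1 = K_r$ with $r \in \{1,2\}$. Setting $H := G_2$, the other component is trivially connected; it cannot be a clique (for direct inspection shows that $G = K_r + K_m$ with $r, m \leq 2$ is $(s,1)$-polar whenever $s \geq 2$), and it cannot be complete $s$-partite (otherwise $(V_H, V_{G_1})$ would be an $(s,1)$-polar partition of $G$). The main obstacle I anticipate is the classification step (I)/(II), where the multipartite structure of $A$ must be carefully reconciled with the disconnectedness of $G$; once that dichotomy is in hand, the remaining arguments reduce to bookkeeping with minimality and induced-subgraph containment.
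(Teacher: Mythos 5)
Your proposal is correct. Note that for this lemma the paper itself gives no written argument: it cites \cite{contrerasDAM281}, where the statement is proved for cographs, and merely remarks that a minor change yields the general version. Your proof is therefore a self-contained replacement rather than a transcription, and its core dichotomy is sound: in any $(s,1)$-polar partition $(A,B)$ of a disconnected graph the clique $B$ lies in one component, and if $A$ meets both components then, since vertices in different components are nonadjacent and hence share a part of the complete multipartite graph $G[A]$, the set $A$ collapses to a single independent part, forcing the component contained in $A$ to be $K_1$. Your three applications of this dichotomy (ruling out a clique component of order at least three, reducing the two-non-clique case to $2P_3=E_2$, and excluding $H$ being a clique or complete multipartite with at most $s$ parts) all check out, and they recover exactly the conclusion $G\cong K_r+H$ with $r\in\{1,2\}$. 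The only step left implicit is that $2K_{s+1}$ is itself an $(s,1)$-polar obstruction, which you need in order to invoke minimality (\Cref{rem:hered}) and conclude $G\cong 2K_{s+1}$ in the clique--clique case; this follows at once from your own dichotomy (neither component of $2K_{s+1}$ is $K_1$ or complete multipartite with at most $s$ parts), so it is a one-line addition rather than a gap.
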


The proof of the following lemma has the same spirit than the proof of Lemma 4
in \cite{contrerasDAM281}, but it has been rewritten for the sake of clarity.

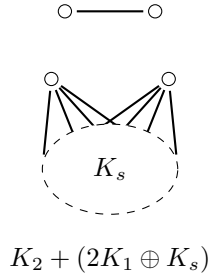
\begin{figure}[ht!]
\begin{center}
\begin{tikzpicture}

\begin{scope}[scale=0.6]
\node [vertex] (0) at (-1,3.5){};
\node [vertex] (1) at (1,3.5){};
\node [vertex] (2) at (-1.3,2){};
\node [vertex] (3) at (1.3,2){};

\node (4) at (-1.5,0){};
\node (5) at (-0.5,0){};
\node (7) at (0.5,0){};
\node (8) at (1.5,0){};

\foreach \from/\to in {0/1,2/4,2/5,2/7,2/8,3/4,3/5,3/7,3/8}
  \draw [edge] (\from) to (\to);
\end{scope}

\begin{scope}[scale=0.6]
\draw[dashed, fill=white] (0,0) ellipse (1.5cm and 1cm);
\node (10) at (0,0){$K_s$};
\node (10) at (0,-2){$K_2 + (2K_1 \oplus K_s)$};
\end{scope}

\end{tikzpicture}
\end{center}
\caption{The only minimal $(s,1)$-polar obstruction different from $2K_{s+1}$
with exactly two connected components one of them being isomorphic to $K_2$.}
\label{fig:ObsTypeK2}
\end{figure}

\begin{lemma}
\label{lem:H+K2}
Let $s$ be an integer, $s \ge 2$.   If $H$ is a connected graph such that $G
= K_2 + H$ is a minimal $(s,1)$-polar obstruction other than $2K_{s+1}$, then
$H$ is isomorphic to $2K_1 \oplus K_s$.
\end{lemma}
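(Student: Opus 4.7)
My plan is to reduce the statement to a purely structural question about $H$ by characterizing which joins/disjoint unions give $(s,1)$-polar graphs, and then extracting $H$ from the local minimality conditions.

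First, I would carry out a case analysis on any hypothetical $(s,1)$-polar partition $(A,B)$ of $G=K_2+H$, according to where the vertices $v_1,v_2$ of $K_2$ land. Since $v_1,v_2$ have no neighbours in $H$, the constraints that $G[A]$ be complete $\le s$-partite and $G[B]$ be a clique force essentially only three possibilities: $\{v_1,v_2\}\subseteq A$ (forcing $H$ to be a clique), $\{v_1,v_2\}\subseteq B$ (forcing $H$ to be complete $\le s$-partite), or $v_1,v_2$ split (which only works if $H$ has no edges). Hence $G$ is $(s,1)$-polar iff $H$ is either a clique or complete $\le s$-partite; since $G$ is an obstruction, $H$ is neither. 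The same case analysis applied to $K_1+H=G-v_1$ yields that $K_1+H$ is $(s,1)$-polar iff $H$ is split or complete $\le s$-partite, and since $G-v_1$ must be $(s,1)$-polar, this forces $H$ to be split. Finally, for each $w\in V_H$, minimality gives $K_2+(H-w)$ is $(s,1)$-polar, so $H-w$ is a clique or complete $\le s$-partite.

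Next, I would fix a non-edge $ab$ of $H$ (which exists since $H$ is not a clique) and split into two cases. If for some non-adjacent pair $a,b$ the graph $H-a$ is a clique, then write $V_H=\{a\}\cup K$ with $K$ a clique, let $K'=N(a)\cap K$ and $\bar K=K\setminus K'$. Since $H$ is not a clique, $\bar K\ne\emptyset$; if $|\bar K|\ge 2$, picking $b_1,b_2\in \bar K$ and any $w\in K'$ (which exists by connectedness) gives $H-w$ containing the non-edges $ab_1,ab_2$ together with the edge $b_1b_2$, which makes the non-adjacency relation on $H-w$ non-transitive, contradicting its being complete multipartite. So $|\bar K|=1$, and $H\cong 2K_1\oplus K_{|K'|}$, which is complete $(|K'|+1)$-partite; the condition that $H$ itself is not complete $\le s$-partite forces $|K'|\ge s$, while the condition on $H-c$ for $c\in K'$ being complete $\le s$-partite forces $|K'|\le s$, yielding $H\cong 2K_1\oplus K_s$.

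The main obstacle is ruling out the other case, where for \emph{every} non-adjacent pair $a,b$ both $H-a$ and $H-b$ are complete $\le s$-partite. Here I fix a split partition $(I,C)$ of $H$ with $|I|\ge 2$ (forced, since $|I|=1$ puts us into the previous case) and exploit the fact that complete multipartiteness of a graph is equivalent to non-adjacency being an equivalence relation. Applying this to each $H-c$ ($c\in C$) and using that $I$ is an independent class forces every vertex of $C$ to be either completely adjacent to $I$ or completely non-adjacent to $I$; the same argument applied to $H-c$ then forces $|C_{\text{none}}|\le 1$. Counting the resulting parts of $H$ (one big class $I\cup C_{\text{none}}$ plus one singleton per vertex of $C_{\text{full}}$) shows $H$ itself is complete $(1+|C_{\text{full}}|)$-partite, so $|C_{\text{full}}|\ge s$; but applying the same counting to $H-a$ for $a\in I$ forces $|C_{\text{full}}|\le s-1$, a contradiction. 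The small subcase $|C|\le 1$ is handled by a direct check. This eliminates the second case and completes the proof.
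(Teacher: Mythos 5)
Your proof is correct, and it takes a genuinely different route from the paper's. The paper leans on machinery already in place: \Cref{lem:twoComponentsB} gives that $H$ is not complete $s$-partite; an induced $\overline{P_3}$ in $H$ is excluded because it would make $G$ properly contain the known minimal obstruction $K_1+2K_2$, so $H$ is complete multipartite and contains $K_{s+1}$; a maximum clique plus one outside vertex then yields an induced $2K_1\oplus K_s$, and the conclusion $G\cong K_2+(2K_1\oplus K_s)$ follows from \Cref{rem:hered} together with the (routinely verified) fact that $K_2+(2K_1\oplus K_s)$ is itself an obstruction. You instead argue from first principles: by characterizing exactly when $K_2+H'$ and $K_1+H'$ are $(s,1)$-polar, minimality alone tells you that $H$ is split, is neither a clique nor complete $\le s$-partite, and that every vertex-deleted subgraph $H-w$ is a clique or complete $\le s$-partite (hence $\overline{P_3}$-free); your case analysis on a split partition then pins $H$ down to $2K_1\oplus K_s$ directly. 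What your route buys is self-containment: you never invoke the catalogue of disconnected obstructions, the minimality of $K_1+2K_2$, or the verification that the target graph is an obstruction. What it costs is length: the paper's single observation that $H$ itself must be $\overline{P_3}$-free collapses your two cases into one short computation. The details you defer are genuinely routine --- for the ``fourth vertex'' needed when you delete around a bad triple you can always take another vertex of $C$ when $|C|\ge 2$ or a third vertex of $I$, and the subcase $|C|\le 1$ forces the connected split graph $H$ to be $K_1$ or a star, hence a clique or complete $2$-partite, contradicting what you already established --- so the argument is complete as sketched.
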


\begin{proof}
	It is routine to verify that $K_2 + (2K_1 \oplus K_s)$ is a minimal
	$(s,1)$-polar obstruction. From \Cref{lem:twoComponentsB} we know that $H$
	is not a complete $s$-partite graph, so $H$ contains a copy of either
	$\overline {P_3}$ or $K_{s+1}$ as an induced subgraph. Nevertheless, $H$ is
	a $\overline{P_3}$-free graph, for otherwise $G$ would contain $K_1+2K_2$ as
	a proper induced subgraph. Thus, $H$ contains a copy of $K_{s+1}$ as a
	proper induced subgraph. Let $K$ be a maximum clique in $H$, and let $v\in
	V_H - K$. As we argued above, $H$ is a $\overline{P_3}$-free graph, so $v$
	is adjacent to all but one vertex $w$ in $K$. Hence, for any $s$-subset $V'$
	of $K \cap N(v)$, the graph $H[V'\cup \{v,w\}]$ is isomorphic to $2K_1\oplus
	K_s$, so $G \cong K_2 + (2K_1 \oplus K_s)$.
\end{proof}

The following lemma is a slight generalization of Lemma 5 in
\cite{contrerasDAM281}.   Since the main ideas of the proof are very similar, we
will only explain the significant differences.

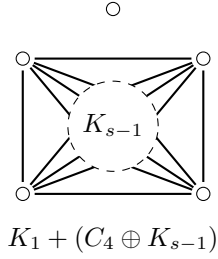
\begin{figure}[ht!]
\centering
\begin{tikzpicture}

\begin{scope}[scale=0.6]
\node (10) at (0,-2){\color{white}$K_2 + (2K_1 \oplus K_s)$};
\end{scope}

\begin{scope}[scale=0.6, yshift=-0.3cm]
\node [vertex] (0) at (0,3.6){};

\node [vertex] (0) at (2,2.5){};
\node [vertex] (1) at (2,-0.5){};
\node [vertex] (2) at (-2,2.5){};
\node [vertex] (3) at (-2,-0.5){};

\node [vertex, yshift=0.6cm] (4) at (90:0.75){};
\node [vertex, yshift=0.6cm] (5) at (45:0.75){};
\node [vertex, yshift=0.6cm] (6) at (0:0.75){};
\node [vertex, yshift=0.6cm] (7) at (315:0.75){};
\node [vertex, yshift=0.6cm] (8) at (270:0.75){};
\node [vertex, yshift=0.6cm] (9) at (225:0.75){};
\node [vertex, yshift=0.6cm] (10) at (180:0.75){};
\node [vertex, yshift=0.6cm] (11) at (135:0.75){};

\foreach \from/\to in {0/1,1/3,3/2,2/0,0/4,0/5,0/6,1/6,1/7,1/8,3/8,3/9,3/10,
2/10,2/11,2/4}
  \draw [edge] (\from) to (\to);
\end{scope}

\begin{scope}[scale=0.6, yshift=0.7cm]
\draw[dashed, fill=white] (0,0) ellipse (1cm and 1cm);
\node (10) at (0,0){$K_{s-1}$};
\node (10) at (0,-2.5){$K_1 + (C_4 \oplus K_{s-1})$};
\end{scope}

\end{tikzpicture}
\caption{The only minimal $(s,1)$-polar obstruction different from those on
\Cref{fig:essentials} with exactly two connected components one of them being
isomorphic to $K_1$.}
\label{fig:ObsTypeK1}
\end{figure}

\begin{lemma}
\label{lem:H+K1}
Let $s$ be an integer, $s\ge 2$.   If $H$ is a connected graph such that $G =
K_1 + H$ is a minimal $(s,1)$-polar obstruction isomorphic to none of the graphs
depicted in \Cref{fig:essentials}, then $G$ is isomorphic to $K_1 + (C_4\oplus
K_{s-1})$.
\end{lemma}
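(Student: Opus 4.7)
The plan is to translate the minimality of $G=K_1+H$ into structural conditions on $H$ and then split the argument into two cases according to whether $H$ is complete multipartite. First, I would establish the dichotomy: $G=K_1+H$ is $(s,1)$-polar if and only if $H$ is complete $s$-partite (in which case $B=\{v_0\}$ and $A=V_H$) or $H$ is split (in which case $v_0$ joins the independent side of a split partition). Hence $G$ being a minimal obstruction forces $H$ to be neither complete $s$-partite nor split, while $G-v_0=H$ being $(s,1)$-polar by minimality produces an $(s,1)$-polar partition $(A,B)$ of $H$ with both sides nonempty and with $A$ having at least two parts; moreover, for every $v\in V_H$, the graph $H-v$ must be complete $s$-partite or split. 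In the same spirit, ruling out every graph of \Cref{fig:essentials} as a proper induced subgraph of $G$ shows that $H$ is $\{2K_2,\ K_1+C_4,\ 2P_3,\ \overline{P_3+K_2},\ C_5,\ P,\ \overline{P_5}\}$-free.

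Assume first that $H$ is complete multipartite and write $H=K_{n_1,\dots,n_k}$. By \Cref{lem:twoComponentsB} and the fact that $H$ is not split, we have $k\ge s+1$ and at least two parts of size $\ge 2$. Deleting a vertex from a singleton part leaves the same number of large parts (so $H-v$ is not split) and $k-1$ parts in total, forcing $k-1\le s$ and hence $k=s+1$; deleting a vertex from a part of size $\ge 3$ leaves at least two parts of size $\ge 2$ and still $s+1$ parts, so $H-v$ is neither split nor complete $s$-partite, which is impossible. Therefore every part of $H$ has size at most $2$, and deleting a vertex from a size-$2$ part must yield a split graph, which forces the number of size-$2$ parts to be exactly $2$. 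This yields $H\cong K_{2,2,1,\dots,1}=C_4\oplus K_{s-1}$.

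In the remaining case, $H$ is not complete multipartite and therefore contains an induced $\overline{P_3}$ on some triple $\{u,v_1,v_2\}$ with $v_1v_2\in E_H$ and $u$ non-adjacent to both. Fixing an $(s,1)$-polar partition $(A,B)$ of $H$, the $\overline{P_3}$ cannot lie entirely in $A$ (complete multipartite) nor entirely in $B$ (clique); so it must cross the partition, and a short check leaves only three essential configurations ($u\in A$ with $v_1,v_2\in B$; $u,v_1\in A$ in the same part with $v_2\in B$; or $u\in B$ with $v_1,v_2\in A$ in different parts). In each of them I would apply the $\overline{P_3+K_2}$-free condition to the common neighborhood $N(u)\cap N(v_1)\cap N(v_2)$ (forbidding non-adjacent pairs there, which severely restricts how $B$ interacts with $A$) together with the $2K_2$-free condition applied to the edge $v_1v_2$ against edges of $A$, and finish by exhibiting a forbidden induced subgraph—typically an instance of $\overline{P_3+K_2}$, $2K_2$ or $K_1+C_4$—to obtain the desired contradiction.

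The main obstacle is precisely this last case: juggling the polar partition, the crossing $\overline{P_3}$, and the several forbidden subgraphs demands delicate bookkeeping. In the cograph setting of \cite{contrerasDAM281}, $P_4$-freeness immediately produces a join/disjoint-union decomposition that pins down $H$; here, no such decomposition is available, and the required structure of $H$ must be extracted indirectly, primarily by combining the $2K_2$- and $\overline{P_3+K_2}$-free conditions with the structure of the $(s,1)$-polar partition.
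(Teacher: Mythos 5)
Your reduction of the problem is sound: the dichotomy ``$K_1+H$ is $(s,1)$-polar iff $H$ is complete multipartite with at most $s$ parts or split'' is correct, the list of forbidden induced subgraphs of $H$ extracted from \Cref{fig:essentials} is the right one, and your first case ($H$ complete multipartite) is essentially complete -- the vertex-deletion argument pinning the part sizes to $2,2,1,\dots,1$ works (modulo the small reordering needed to first rule out the possibility that $H$ has no singleton part, which is easy since $k\ge s+1\ge 3$). This part is in fact a clean alternative to the paper, which never case-splits on whether $H$ is complete multipartite.

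The genuine gap is your second case, and you flag it yourself: for $H$ not complete multipartite you only describe what you ``would'' do -- pick a crossing $\overline{P_3}$, note that its common neighborhood is a clique by $\overline{P_3+K_2}$-freeness, invoke $2K_2$-freeness against an edge of $A$ -- but no contradiction is actually derived in any of the three configurations, and it is far from clear that such purely local bookkeeping around a single $\overline{P_3}$ suffices. This case is precisely where the paper's proof does its work, and it uses a different, global key step that your sketch is missing: since $H$ is not split and is $\{2K_2,C_5\}$-free (because $K_1+2K_2$ and $K_1+C_5$ are in \Cref{fig:essentials}), $H$ contains an induced $C_4$; since $H$ is not complete bipartite there is a vertex outside this $C_4$, and the remaining obstructions $C_4+2K_1$, $K_1+P$, $K_1+\overline{P_5}$, $K_1+\overline{P_3+K_2}$ force every outside vertex to be adjacent to exactly one pair of opposite vertices of the $C_4$ or to all four; then $\overline{P_3+K_2}$- and $P$-freeness show $H$ is the join of a complete bipartite graph with a $\overline{P_3}$-free (hence complete multipartite) graph, so $H$ is complete multipartite after all and contains $C_4\oplus K_{s-1}$. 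In other words, the emptiness of your second case is equivalent to the heart of the lemma, and your proposal does not prove it; to close the gap you either need this $C_4$-based structure analysis or a genuinely worked-out argument (likely also exploiting the condition that every $H-v$ is complete multipartite or split, which your Case B plan never uses).
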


\begin{proof}
	The graph $H$ cannot be a split graph, so it contains an induced copy of
	either $2K_2, C_4$ or $C_5$. Nevertheless, by the minimality of $G$ and since
	$G \not \cong K_1+C_5$ we know that $H$ is $\{2K_2,C_5\}$-free, so it contains
	an induced cycle on four vertices, $C=(c_1,c_2,c_3,c_4)$. Let $v$ be a vertex
	in $H-V_C$, which must exist since $H$ is not a complete bipartite graph.
	Observe that, since $G$ contains no graph depicted in \Cref{fig:essentials} as
	an induced subgraph, $v$ only could be adjacent to either two nonadjacent
	vertices of $C$ or to every vertex of $C$.

	Let $V_1, V_2$ and $V_3$ be the subsets of vertices of $H$ that are not in $C$
	and that are adjacent to $c_1$ and $c_3$, to $c_2$ and $c_4$, and to $c_i$ for
	every $i \in \{1,2,3,4\}$, respectively. Notice that since $H$ contains no
	induced $\overline{K_2+P_3}$, $V_1$ an $V_2$ are both independent sets, and
	$V_3$ is completely adjacent to $V_1 \cup V_2$. In addition, since $H$ is
	$P$-free, we have that $V_1$ and $V_2$ are completely adjacent. From here is
	straightforward to notice that $H-V_3$ is a complete bipartite graph.

	Hence, $H$ is the join of the complete bipartite graph $H-V_3$ with $H[V_3]$,
	which implies that $H[V_3]$ is not a complete $(s-2)$-partite graph. One more
	time, since $\overline{K_2+P_3}$ is not an induced subgraph of $H$ we have
	that $H$ is a $\overline{P_3}$-free graph, so $H[V_3]$ is too. Therefore,
	$H[V_3]$ contains a copy of $K_{s-1}$ as an induced subgraph, so the result
	follows.
\end{proof}

So far, we have characterized all disconnected minimal $(s,1)$-polar
obstructions, which are a constant number for any choice of $s$. We summarize
this result as follows.

\begin{theorem}
\label{thm:allDisc(s1)obsmin}
Let $s$ be an integer, $s\ge 2$, and let $G$ be a disconnected minimal
$(s,1)$-polar obstruction. Then $G$ satisfies one of the following assertions:
\begin{enumerate}
	\item $G$ is isomorphic to one of the graphs depicted in
    \Cref{fig:essentials}.

	\item $G \cong 2K_{s+1}$.

	\item $G \cong K_2 + (2K_1\oplus K_s)$.

	\item $G \cong K_1 + (C_4\oplus K_{s-1})$.
\end{enumerate}
\end{theorem}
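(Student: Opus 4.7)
My plan is to assemble this theorem as a direct synthesis of the preceding five lemmas, since all the substantive work has already been done; the only task is to show that every disconnected minimal $(s,1)$-polar obstruction $G$ falls into one of the four listed buckets.

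First, I would dispose of the high-component-count case. Applying \Cref{lem:twoComponentsA}, either $G$ is one of $K_1+2K_2$ or $2K_1+C_4$, both of which appear in \Cref{fig:essentials} and thus land in case 1, or else $G$ has at most two connected components. Since we are assuming $G$ is disconnected, this forces $G$ to have exactly two components in the remaining subcase.

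Next, I would apply \Cref{lem:twoComponentsB} to the two-component case. If $G \cong 2K_{s+1}$, we land in case 2; if $G$ is already among the graphs of \Cref{fig:essentials}, we land in case 1; otherwise the lemma gives $G \cong K_r + H$ with $r \in \{1,2\}$, with $H$ connected and not a complete $s$-partite graph. I would then split on $r$: when $r = 2$, \Cref{lem:H+K2} pins down $H \cong 2K_1 \oplus K_s$ (noting that the conclusion $G \cong 2K_{s+1}$ of that lemma has been ruled out), giving case 3; when $r = 1$, \Cref{lem:H+K1} pins down $G \cong K_1 + (C_4 \oplus K_{s-1})$, which is case 4.

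There is no real obstacle — the argument is entirely a case analysis organized by the structural lemmas already proved, and each branch lands in exactly one of the four listed types. The only thing to be careful about is bookkeeping: at every step the hypotheses of the next lemma to be invoked require us to have already excluded the graphs of \Cref{fig:essentials} and $2K_{s+1}$, and this needs to be stated cleanly so that cases 1 and 2 are visibly the catch-alls for those exclusions while cases 3 and 4 correspond exactly to the two possibilities $r = 2$ and $r = 1$ that survive.
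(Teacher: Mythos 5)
Your proposal is correct and matches the paper's intent exactly: the paper states this theorem as a summary of the preceding lemmas (\Cref{lem:discMonopObsmin,lem:twoComponentsA,lem:twoComponentsB,lem:H+K2,lem:H+K1}) without a separate proof, and your case analysis (exceptions of \Cref{lem:twoComponentsA} landing in case 1, then \Cref{lem:twoComponentsB} followed by the split $r=2$ via \Cref{lem:H+K2} and $r=1$ via \Cref{lem:H+K1}) is precisely the intended synthesis, with the exclusions of the \Cref{fig:essentials} graphs and $2K_{s+1}$ tracked correctly.
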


For any nontrivial cograph $G$, either $G$ or its complement is disconnected
\cite{corneilDAM3}, so the complement of any nontrivial connected cograph is
disconnected. This fact was used in \cite{contrerasDAM281} to give a recursive
characterization of all cograph minimal $(s,1)$-polar obstructions. After giving
a complete characterization of the disconnected cograph minimal $(s,1)$-polar
obstructions, the authors provided a recursive construction for the disconnected
cograph minimal $(1,s)$-polar obstructions (which are precisely the complements
of connected cograph minimal $(s,1)$-polar obstructions).

In the following section we will present very similar results for $P_4$-sparse
graphs and $P_4$-extendible graphs. In particular we will prove that all
$P_4$-sparse minimal $(s,1)$-polar obstructions are cographs, which turns out to
be similar in flavor to a result obtained in \cite{hannnebauerTH'10}, stating
that all $P_4$-sparse minimal obstructions for $(k,\ell)$-coloring are cographs.


\section{Connected minimal $(s,1)$-polar obstructions}
\label{sec:connectedM(s1)PO}

\Cref{thm:allDisc(s1)obsmin} characterizes disconnected minimal $(s,1)$-polar
obstructions for general graphs.   Thus, to completely characterize minimal
$(s,1)$-polar obstructions for a given class of graphs it suffices to
characterize connected minimal $(s,1)$-polar obstructions.   To this end, in
order to follow the strategy described in the previous paragraphs for
$P_4$-sparse and $P_4$-extendible graphs, we notice that the following lemma,
which was stated and proved in \cite{contrerasDAM281} for the special case of
cographs, is also valid for general graphs.

\begin{lemma}
\label{lem:basicConnected}
Let $t$ be an integer, $t \ge 2$, and for each $i\in \{1, \dots, t\}$, let $G_i$
be a minimal $(1, k_i)$-polar obstruction that is a $(1, k_i+1)$-polar graph.
Then, for $k = t-1 + \sum_{i=1}^t k_i$, the graph $G = G_1 + \dots + G_t$ is a
minimal $(1, k)$-polar obstruction that is a $(1, k+1)$-polar graph.
\end{lemma}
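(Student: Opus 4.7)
The plan is to verify separately the three assertions implicit in the statement: that $G$ admits a $(1,k+1)$-polar partition, that $G$ admits no $(1,k)$-polar partition, and that every proper induced subgraph of $G$ admits a $(1,k)$-polar partition.

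For the $(1,k+1)$-polarity of $G$, I would take a $(1,k_i+1)$-polar partition $(A_i,B_i)$ of each $G_i$ (which exists by hypothesis) and form their union. Since $G=G_1+\cdots+G_t$ is a disjoint union, $\bigcup_i A_i$ is an independent set and $\bigcup_i B_i$ induces a disjoint union of at most $\sum_{i=1}^{t}(k_i+1)=k+1$ cliques, giving the required partition.

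For the non-$(1,k)$-polarity, I would argue by contradiction: supposing $(A,B)$ is a $(1,k)$-polar partition of $G$, the restriction $(A\cap V_{G_i},\, B\cap V_{G_i})$ is a $(1,\ell_i)$-polar partition of $G_i$ for some nonnegative integer $\ell_i$. The key observation is that every clique of $G[B]$ lies entirely within one component $G_i$ (cliques are connected, and the $G_i$'s are the components of $G$), so $\sum_{i=1}^{t}\ell_i$ equals the total number of cliques in $G[B]$ and is therefore at most $k$. Since $G_i$ is not $(1,k_i)$-polar, one must have $\ell_i\ge k_i+1$ for every $i$, yielding $\sum_i\ell_i\ge k+1$, a contradiction.

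For the minimality, let $H$ be a proper induced subgraph of $G$ and set $H_i=H[V_{G_i}\cap V_H]$. Since $H\ne G$, at least one $H_{i_0}$ is a proper induced subgraph of $G_{i_0}$ and hence is $(1,k_{i_0})$-polar by minimality of $G_{i_0}$; every other $H_j$ is either $G_j$ itself (which is $(1,k_j+1)$-polar by hypothesis) or a proper induced subgraph of $G_j$ (which is $(1,k_j)$-polar, and a fortiori $(1,k_j+1)$-polar). Combining these partitions componentwise, as in the first step, yields a partition of $H$ whose $B$-side has at most $k_{i_0}+\sum_{j\ne i_0}(k_j+1)=k$ cliques, as required. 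The only genuinely nontrivial ingredient here is the observation used in the non-polarity step that cliques of $G[B]$ cannot cross components; the remainder is straightforward bookkeeping with the disjoint-union structure.
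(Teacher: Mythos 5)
Your proof is correct and follows the standard componentwise counting argument; the paper itself does not prove this lemma but defers to the cograph case in \cite{contrerasDAM281}, observing that the argument remains valid for general graphs, and your write-up is exactly that argument made explicit. One minor imprecision: the $G_i$ are not assumed connected, so they need not literally be ``the components of $G$''; your key claim survives anyway, since a clique of $G[B]$ cannot meet two distinct sets $V_{G_i}$, these being completely nonadjacent in the disjoint union.
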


In the following sections we show that the converse of \Cref{lem:basicConnected}
holds for $P_4$-sparse and $P_4$-extendible graphs, that is to say, that any
disconnected minimal $(1,k)$-polar obstruction on such classes is the disjoint
union of minimal $(1,k_i)$-polar obstructions for some integers $k_i < k$.


\subsection{Connected $P_4$-sparse minimal $(s,1)$-polar obstructions}

The induced path on three vertices is a minimal $(0, k)$-polar obstruction for
any integer $k \ge 2$ so, if a graph $G$ contains $P_3$ as a proper induced
subgraph, then $G$ is not a minimal $(0, k)$-polar obstruction. Similarly, if
$G$ contains $\overline{P_3}$ as a proper induced subgraph, then $G$ is not a
minimal $(s, 0)$-polar obstruction. From here, the following observation follows
easily.

\begin{remark}
\label{rem:spiderSplit}
Let $G$ be a spider. If $G$ is a headless spider or the head of $G$ induces a
split graph, then $G$ is a split graph that has both, $P_3$ and its complement,
as proper induced subgraphs. Hence, $G$ is not a minimal $(s,k)$-polar
obstruction for any choice of $s$ and $k$.
\end{remark}

The following two propositions provide the basis for showing that any connected
$P_4$-sparse minimal $(k,1)$-polar obstruction has a disconnected complement.

\begin{proposition}
\label{prop:Sp(1k)}
Let $k$ be a positive integer, and let $G = (S, K, R)$ be a spider with
nonempty head. Then, $G$ is not a minimal $(1,k)$-polar obstruction.
\end{proposition}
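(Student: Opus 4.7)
The plan is to establish the structural fact that a spider $G = (S, K, R)$ with nonempty head is $(1,k)$-polar if and only if $G[R]$ is a split graph, and then to use this characterization together with a short case analysis to produce either a $(1,k)$-polar partition of $G$ or a proper induced subgraph of $G$ that fails $(1,k)$-polarity.

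For the structural fact, the reverse direction is immediate: from a split partition $(I_R, C_R)$ of $G[R]$, the pair $A = S \cup I_R$, $B = K \cup C_R$ yields a $(1,1)$-polar partition of $G$, since $S$ is completely nonadjacent to $R \supseteq I_R$ while $K$ is completely adjacent to $R \supseteq C_R$. For the forward direction, given a $(1,k)$-polar partition $(A,B)$, the independent set $A$ meets the clique $K$ in at most one vertex, so $B$ contains a nonempty clique $K' \subseteq K$ (here we use $|K| \geq 2$); every vertex of $R \cap B$ is adjacent to $K'$, hence lies in the unique cluster-component of $B$ containing $K'$, which forces $R \cap B$ to be a clique. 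Thus $(R \cap A, R \cap B)$ is a split partition of $G[R]$.

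With this characterization in hand, if $G[R]$ is split then $G$ is $(1,k)$-polar (equivalently, apply \Cref{rem:spiderSplit}) and $G$ is not a minimal obstruction. Otherwise $G$ is not $(1,k)$-polar, and I must find a proper induced subgraph of $G$ that also fails $(1,k)$-polarity. When $G[R]$ is not a minimal non-split graph, some $r \in R$ has $G[R-r]$ non-split, and then the spider $G - r$ has a non-split head, so by the structural fact it is not $(1,k)$-polar. Otherwise the F\"oldes--Hammer characterization of split graphs gives $G[R] \in \{2K_2, C_4, C_5\}$; in this case removing any vertex of $R$ yields a spider with split head and hence a $(1,k)$-polar subgraph, forcing us to delete a vertex from $S \cup K$ instead.

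Let $n = |S| = |K|$. For $n \geq 3$ the candidate is $G - k_1$ for some $k_1 \in K$: in a thin spider the leg $s_1$ paired with $k_1$ becomes isolated, so $G - k_1 = K_1 + G'$ with $G'$ a spider of body-size $n - 1 \geq 2$ on the same non-split head, and a routine argument shows $K_1 + G'$ is $(1,k)$-polar if and only if $G'$ is (which fails by the structural fact); in a thick spider, $s_1$ remains adjacent to all of $K - k_1$ and to nothing in $(S - s_1) \cup R$, so $G - k_1$ is a thick spider with head $G[R] + K_1$, still non-split. For $n = 2$ the candidate is $G - s_1$: the body vertex of $K$ not paired with $s_1$ is universal in $G - s_1$, and a case analysis on where it lies in a would-be partition $(A,B)$ forces either $B$ to contain a connected component equal to $K_1 \oplus G[R]$, which cannot be a clique since $G[R]$ is not, or $G - s_1$ itself to be a split graph, which fails because $G - s_1$ contains the non-split graph $K_1 \oplus G[R]$ (a bowtie, $W_4$, or $W_5$ depending on which of $2K_2, C_4, C_5$ is $G[R]$). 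In every case we exhibit a proper induced subgraph of $G$ that is not $(1,k)$-polar, so $G$ is not a minimal $(1,k)$-polar obstruction. The main obstacle is precisely this last case, where no inductive reduction on the head is available and the argument must instead exploit the local structure of $G - s_1$ around its universal vertex together with the non-split nature of $K_1 \oplus G[R]$.
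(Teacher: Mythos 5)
Your proof is correct, but it is organized quite differently from the paper's. You first upgrade \Cref{rem:spiderSplit} to a full characterization (a spider with nonempty head is $(1,k)$-polar if and only if its head induces a split graph) -- the partition analysis you use for the forward direction ($|K\cap A|\le 1$, hence $K\cap B\neq\varnothing$ and $R\cap B$ is a clique) is exactly the computation the paper performs -- and you then refute minimality by explicitly exhibiting a proper induced subgraph that is not $(1,k)$-polar, which forces the extra case analysis on whether $G[R]$ is a minimal non-split graph ($2K_2$, $C_4$, $C_5$ via F\"oldes--Hammer), then thin versus thick and $|K|\ge 3$ versus $|K|=2$; all of these cases check out (in the thin case only heredity is needed for the ``routine argument,'' and your dichotomy around the universal vertex in the $|K|=2$ case is sound). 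The paper avoids all of this surgery with one trick: it assumes $G$ is a minimal obstruction, deletes a single leg $\sigma$, and applies the same partition analysis to a $(1,k)$-polar partition of $G-\sigma$, concluding in two lines that either $R$ is a clique (so $G$ is split, hence $(1,k)$-polar) or $K\subseteq B$ (so $(A\cup\{\sigma\},B)$ extends to $G$), a contradiction either way; the point is that minimality lets one use the partition of the subgraph directly, rather than having to produce a subgraph that is itself an obstruction. What your longer route buys is the stronger, reusable statement that $(1,k)$-polarity of a spider is equivalent to splitness of its head, which the paper never states explicitly.
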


\begin{proof}
	Suppose for a contradiction that $G$ is a minimal $(1,k)$-polar obstruction,
	and let $\sigma \in S$ be a leg of $G$. Let $(A,B)$ be a $(1,k)$-polar
	partition of $G-\sigma$. Notice that $|K \cap A| \le 1$ because $K$ is a
	clique and $A$ is an independent set. Therefore, since $K$ has at least two
	vertices, $K \cap B \ne \varnothing$. Moreover, since $B$ induces a cluster,
	$R$ is completely adjacent to $K$, and $K \cap B \ne \varnothing$, $R \cap B$
	is a clique. Also notice that either $K \cap A = \varnothing$ or $R \cap A =
	\varnothing$.

	Now, if $K \cap A \ne \varnothing$, then $R$ is a clique, $G$ is a split
	graph, and therefore $G$ is a $(1,k)$-polar graph, which is impossible.
	Otherwise, if $K \subseteq B$, then $(A \cup \{\sigma\}, B)$ is a
	$(1,k)$-polar partition of $G$, a contradiction.
\end{proof}

Since the complement of a spider is also a spider, and any minimal
$(\infty,1)$-polar obstruction is a minimal $(k,1)$-polar obstruction for some
positive integer $k$, we have the following simple consequences of the previous
proposition.

\begin{corollary}
\label{cor:spiderNO(s1)obsmin}
Let $k$ be a positive integer.   If $G$ is a spider, then $G$ is neither a
minimal $(k,1)$-polar obstruction nor a minimal $(\infty,1)$-polar obstruction.
\end{corollary}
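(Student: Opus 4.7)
The plan is to handle the two structural types of spiders separately, headless and non-headless, and to reduce the $(\infty,1)$-polar case to the $(k,1)$-polar case by a standard argument about unbounded parameters.

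First, I will dispose of headless spiders by direct appeal to \Cref{rem:spiderSplit}: any headless spider is a split graph properly containing both $P_3$ and $\overline{P_3}$, hence not a minimal $(s,k)$-polar obstruction for any $s$ and $k$. So it remains to rule out spiders with nonempty head. For such a spider $G = (S, K, R)$, I will invoke the fact (noted after the definition of spider) that the complement of a thin spider is a thick spider (and conversely). A direct inspection of the adjacencies between $S$, $K$, and $R$ shows that $\overline{G}$ is again a spider whose head is still $R$; in particular $\overline{G}$ has nonempty head. Since $(A,B)$ is an $(s,k)$-polar partition of $G$ if and only if $(B,A)$ is a $(k,s)$-polar partition of $\overline{G}$, the class of minimal $(s,k)$-polar obstructions is mapped bijectively to the class of minimal $(k,s)$-polar obstructions under complementation. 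Applying this observation together with \Cref{prop:Sp(1k)} to $\overline{G}$ yields that $G$ is not a minimal $(k,1)$-polar obstruction.

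For the second part, I will show that any minimal $(\infty,1)$-polar obstruction is in fact a minimal $(k,1)$-polar obstruction for some positive integer $k$, which combined with the previous paragraph closes the argument. Indeed, if $H$ is a minimal $(\infty,1)$-polar obstruction, then $H$ is not $(k,1)$-polar for any $k$, while every proper induced subgraph $H'$ of $H$ is $(k_{H'},1)$-polar for some integer $k_{H'}$; taking $k$ to be the maximum of these finitely many $k_{H'}$ gives that $H$ is a minimal $(k,1)$-polar obstruction. Hence, a spider cannot be a minimal $(\infty,1)$-polar obstruction either, completing the proof.

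The main obstacle is really just bookkeeping: checking carefully that complementation preserves the spider structure with the head fixed (so that the hypothesis ``nonempty head'' of \Cref{prop:Sp(1k)} transfers from $G$ to $\overline{G}$) and that the reduction from $(\infty,1)$ to $(k,1)$ works uniformly over the finitely many proper induced subgraphs. No substantial new ideas beyond \Cref{rem:spiderSplit} and \Cref{prop:Sp(1k)} are needed.
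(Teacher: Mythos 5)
Your proposal is correct and follows essentially the same route as the paper: complementation of the spider (which is again a spider with the same head), an appeal to \Cref{prop:Sp(1k)} via the correspondence between $(s,k)$-polar partitions of $G$ and $(k,s)$-polar partitions of $\overline{G}$, and the reduction of the $(\infty,1)$ case to the $(k,1)$ case for a suitable finite $k$. Your explicit treatment of the headless case through \Cref{rem:spiderSplit} only spells out a detail the paper leaves implicit.
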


\begin{corollary}
\label{cor:spiderNOobsmin}
Let $k$ be a positive integer.   If $G$ is a $P_4$-sparse minimal $(k,1)$-polar
obstruction, then $G$ or its complement is disconnected.
\end{corollary}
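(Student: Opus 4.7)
The plan is straightforward: this corollary follows by combining the connectedness trichotomy for $P_4$-sparse graphs (\Cref{thm:connCharSparse}) with the spider exclusion result (\Cref{cor:spiderNO(s1)obsmin}). I would argue by contradiction, supposing that $G$ is a $P_4$-sparse minimal $(k,1)$-polar obstruction such that both $G$ and $\overline{G}$ are connected, and then derive a contradiction by showing $G$ cannot exist.

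First, I would invoke \Cref{thm:connCharSparse} on $G$ itself. Since $G$ is a nontrivial $P_4$-sparse graph (any minimal $(k,1)$-polar obstruction has at least two vertices, as $K_1$ is $(k,1)$-polar), the theorem tells us that exactly one of the following holds: $G$ is disconnected, $\overline{G}$ is disconnected, or $G$ is a spider. Under our contradiction hypothesis, the first two cases are excluded, so $G$ must be a spider.

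Next, I would apply \Cref{cor:spiderNO(s1)obsmin}, which states that no spider is a minimal $(k,1)$-polar obstruction. This immediately contradicts the assumption that $G$ is a minimal $(k,1)$-polar obstruction, completing the proof.

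There is essentially no obstacle here, since all the heavy lifting has already been done in \Cref{prop:Sp(1k)} and its corollary, together with the structural \Cref{thm:connCharSparse}. The only thing to verify is that the trichotomy applies to $G$ (which is immediate from $G$ being a nontrivial $P_4$-sparse graph), and that the three cases are indeed exhaustive, which is exactly the content of \Cref{thm:connCharSparse}.
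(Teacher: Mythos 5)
Your proposal is correct and matches the paper's proof essentially verbatim: both argue that if $G$ and $\overline{G}$ were connected, \Cref{thm:connCharSparse} would force $G$ to be a spider, contradicting \Cref{cor:spiderNO(s1)obsmin}. Your extra check that $G$ is nontrivial is a harmless (and valid) added detail.
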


	\begin{proof}
	Since $G$ is a $P_4$-sparse graph, if $G$ and $\overline G$ are connected, we
	have from \Cref{thm:connCharSparse} that $G$ is a spider, but that is
	impossible by \Cref{cor:spiderNO(s1)obsmin}. Therefore, either $G$ or its
	complement is disconnected.
	\end{proof}

The next two results, together with \Cref{lem:basicConnected}, provide us with a
complete structural characterization for disconnected $P_4$-sparse minimal
$(1,k)$-polar obstructions.

\begin{lemma}
\label{lem:kappa}
Let $t$ be an integer, $t\ge 2$, and for each $i\in \{1,\dots,t\}$, let $G_i$ be
a connected $P_4$-sparse minimal $(1,k_i)$-polar obstruction which is a $(1,
k_i+1)$-polar graph. If $G = G_1 + \dots + G_t$, then $G$ is a minimal
$(1,k)$-polar obstruction if and only if $k = t-1+\sum_{i=1}^t k_i$.
\end{lemma}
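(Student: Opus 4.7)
The forward ``$\Leftarrow$'' direction is immediate from Lemma~\ref{lem:basicConnected}: taking $k=t-1+\sum_{i=1}^t k_i$ in that lemma directly yields that $G$ is a minimal $(1,k)$-polar obstruction.

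For the converse, suppose $G$ is a minimal $(1,k)$-polar obstruction, and define $\mu(H):=\min\{\ell\ge 0:H\text{ is }(1,\ell)\text{-polar}\}$. The first key step is the additivity of $\mu$ under disjoint union: any $(1,\ell)$-polar partition of $H_1+H_2$ restricts to polar partitions of the $H_i$ whose cluster parts total $\ell$ cliques, and concatenating optimal partitions of $H_1,H_2$ produces a $(1,\mu(H_1)+\mu(H_2))$-polar partition of $H_1+H_2$. Applied inductively to $G$, together with the identity $\mu(G_i)=k_i+1$ (which follows from $G_i$ being a minimal $(1,k_i)$-polar obstruction that is $(1,k_i+1)$-polar), this yields $\mu(G)=k^*+1$ where $k^*:=t-1+\sum k_i$. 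Since $G$ is not $(1,k)$-polar, $k\le k^*$.

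For the reverse inequality $k\ge k^*$, additivity gives $\mu(G-v)=\mu(G_{i_0}-v)+\sum_{j\ne i_0}(k_j+1)=\mu(G_{i_0}-v)+(k^*-k_{i_0})$ for any $v\in V(G_{i_0})$. Thus it suffices to exhibit some index $i_0$ and some $v\in V(G_{i_0})$ with $\mu(G_{i_0}-v)=k_{i_0}$: for such a vertex, the minimality of $G$ forces $k\ge \mu(G-v)=k^*$.

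The main obstacle is producing this vertex. Suppose toward contradiction that $\mu(G_i-v)\le k_i-1$ for every $i$ and every $v\in V(G_i)$. Then each $G_i$ would simultaneously be a minimal $(1,k_i-1)$-polar obstruction whose $\mu$ drops by at least $2$ under any single vertex deletion. I will rule this out using the structural dichotomy of Theorem~\ref{thm:connCharSparse}: since $(1,k)$-polarity for $G_i$ is equivalent to $(k,1)$-polarity for $\overline{G_i}$ and the complement of a spider is a spider, Corollary~\ref{cor:spiderNO(s1)obsmin} prevents any $G_i$ from being a spider, so each connected $G_i$ must be a nontrivial join $H_1\oplus H_2$. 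A direct case analysis of optimal $(1,\ell)$-polar partitions of such a join (sorted by whether the independent part lies in $V(H_1)$ or $V(H_2)$), combined with the minimality of $G_i$ as a $(1,k_i)$-polar obstruction, exhibits a vertex whose deletion drops $\mu$ by exactly one, giving the desired contradiction.
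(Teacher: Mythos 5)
Your overall skeleton is sound and genuinely different from the paper's: the forward direction via \Cref{lem:basicConnected} is exactly the paper's first step, the additivity of $\mu$ over disjoint union is correct (an independent set in $H_1+H_2$ splits over the components, and optimal partitions concatenate), $\mu(G_i)=k_i+1$ gives $k\le k^{*}$, and the reduction of $k\ge k^{*}$ to exhibiting one index $i_0$ and one vertex $v$ with $\mu(G_{i_0}-v)=k_{i_0}$ is a clean reformulation. The paper instead passes to complements and invokes \Cref{cor:spiderNOobsmin} together with the classification \Cref{thm:allDisc(s1)obsmin} to produce, inside each $G_i$, proper induced subgraphs that are minimal $(1,\kappa_i)$-polar obstructions for every $\kappa_i<k_i$, and then reuses the cograph argument of Lemma~8 of \cite{contrerasDAM281}.

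The problem is that your decisive step is asserted, not proved. Everything hinges on the claim that some $G_{i_0}$ has a vertex whose deletion leaves $\mu$ equal to $k_{i_0}$, i.e.\ that a connected $P_4$-sparse minimal $(1,k_{i_0})$-polar obstruction that is $(1,k_{i_0}+1)$-polar cannot simultaneously be a minimal $(1,k_{i_0}-1)$-polar obstruction; your proof of this is the sentence ``a direct case analysis \dots exhibits a vertex whose deletion drops $\mu$ by exactly one,'' which is precisely the nontrivial content of the converse and precisely where the paper brings in \Cref{thm:allDisc(s1)obsmin}. Knowing only that $G_{i_0}=H_1\oplus H_2$ is a nontrivial join is not enough as stated: you must additionally use that $G_{i_0}$ is $(1,k_{i_0}+1)$-polar but not $(1,1)$-polar to force $G_{i_0}\cong mK_1\oplus(C_1+\dots+C_{k_{i_0}+1})$ with the $C_j$ cliques, and then let minimality pin down the admissible shapes, namely $\overline{2K_{k_{i_0}+1}}$, $\overline{K_2+(2K_1\oplus K_{k_{i_0}})}$ and $\overline{K_1+(C_4\oplus K_{k_{i_0}-1})}$, in each of which the required vertex indeed exists (e.g.\ delete a vertex of a singleton clique, or any vertex of $K_{k_{i_0}+1,k_{i_0}+1}$); the degenerate case $k_{i_0}=0$, where $G_{i_0}\cong K_2$, is immediate. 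Carrying this out is essentially re-proving (the complement of) \Cref{thm:allDisc(s1)obsmin}, so the honest fix is either to do that case analysis explicitly or, more economically, to apply \Cref{cor:spiderNOobsmin} and \Cref{thm:allDisc(s1)obsmin} to $\overline{G_{i_0}}$ as the paper does and verify the vertex exists in each of the three resulting families. As written, the proposal stops short of the argument that actually makes the lemma true.
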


	\begin{proof}
	Let $k = t-1+\sum_{i=1}^t k_i$. We have from \Cref{lem:basicConnected} that
	$G$ is a minimal $(1,k)$-polar obstruction which is $(1,k+1)$-polar, so we
	just need to show that $G$ is not a minimal $(1,\kappa)$-polar obstruction for
	any $\kappa<k$.

	Let $G$ be a connected $P_4$-sparse minimal $(1, k_i)$-polar obstruction which
	is $(1, k_i+1)$-polar. By \Cref{cor:spiderNOobsmin}, $\overline G$ is a
	disconnected minimal $(k_i, 1)$-polar obstruction which is a $(k_i+1,
	1)$-polar graph. Then, it follows from \Cref{thm:allDisc(s1)obsmin} that, for
	any nonnegative integer $\kappa_i$ such that $\kappa_i < k_i$, $G$ contains a
	proper induced subgraph $G'$ that is both, a $P_4$-sparse minimal $(\kappa_i,
	1)$-polar obstruction and a $(\kappa_i+1, 1)$-polar graph. From here on, the
	proof follows as the proof of Lemma 8 in \cite{contrerasDAM281}.
	\end{proof}

\begin{lemma}
\label{lem:disc(1k)obsmin}
Let $k$ be a nonnegative integer. If $G$ is a disconnected $P_4$-sparse minimal
$(1,k)$-polar obstruction with components $G_1, \dots, G_t$, then there exist
nonnegative integers $k_1, \dots, k_t$ such that for each $i \in \{1,\dots,t\}$,
$G_i$ is a connected minimal $(1,k_i)$-polar obstruction that is a
$(1,k_i+1)$-polar graph, and $\sum_{i=1}^t k_i = k-t+1$. (Notice that $k_i<k$
for any $i \in \{1, \dots, t\}$, and $G$ is a $(1,k+1)$-polar graph.)
\end{lemma}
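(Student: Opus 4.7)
The plan is to encode everything through a single numerical invariant. For a graph $H$, let $f(H)$ denote the minimum $c\ge 0$ such that $H$ is $(1,c)$-polar, so that $H$ is $(1,k)$-polar if and only if $f(H)\le k$. Two elementary properties of $f$ drive the argument: \emph{(Additivity)} if $H$ has connected components $H_1,\dots,H_r$, then $f(H)=\sum_{j=1}^{r} f(H_j)$, since each clique of the cluster side of any $(1,c)$-polar partition lies in a single component, so partitions decompose and recombine freely across components; and \emph{(Local stability)} $f(H)-1\le f(H-v)\le f(H)$ for every $v\in V_H$, the upper bound being by restriction and the lower bound by adding $\{v\}$ as a fresh singleton clique to the cluster side of any optimal partition of $H-v$.

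Armed with these, I would first rule out isolated vertices of $G$: if some $v\in V_G$ were isolated, a $(1,k)$-polar partition of $G-v$ (which exists by the minimality of $G$) would extend to one of $G$ by placing $v$ in the independent part, contradicting that $G$ is a $(1,k)$-polar obstruction. Hence every component $G_i$ has at least one edge, so $f(G_i)\ge 1$, and I set $k_i := f(G_i)-1 \ge 0$.

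Next I would pin down $f(G)=k+1$. The minimality of $G$ gives $f(G-v)\le k<f(G)$ for every $v$, while local stability forces $f(G)\le f(G-v)+1\le k+1$. Combined with additivity, $\sum_{i=1}^{t} k_i = f(G) - t = k - t + 1$, which is the required identity.

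Finally, for each $i$ I would verify that $G_i$ is a minimal $(1,k_i)$-polar obstruction which is $(1,k_i+1)$-polar: by the choice of $k_i$, $f(G_i)=k_i+1$, so $G_i$ is $(1,k_i+1)$-polar but not $(1,k_i)$-polar; and for any $v\in V_{G_i}$, additivity yields
\[
  k \ge f(G-v) = f(G) - f(G_i) + f(G_i-v) = (k-k_i) + f(G_i-v),
\]
so $f(G_i-v)\le k_i$, meaning $G_i-v$ is $(1,k_i)$-polar. Since any proper induced subgraph of $G_i$ is contained in some $G_i-v$ and $(1,k_i)$-polarity is hereditary, the minimality of $G_i$ as a $(1,k_i)$-polar obstruction follows. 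The only technical point requiring care is the clean setup of additivity, resting on the observation that cliques cannot straddle distinct components; the remainder is bookkeeping, and in fact the $P_4$-sparse hypothesis is never used, so the conclusion holds for arbitrary disconnected minimal $(1,k)$-polar obstructions.
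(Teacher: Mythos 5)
Your invariant $f$ and the component-wise bookkeeping are largely sound: additivity of $f$ over components is correct, excluding trivial components is correct, and, using only $f(G)\ge k+1$ (which follows from $G$ not being $(1,k)$-polar) together with the finiteness $f(G)=\sum_i f(G_i)\le tk$ (each $G_i$ is a proper induced subgraph of $G$, hence $(1,k)$-polar), your computation gives $f(G_i-v)\le k-f(G)+f(G_i)\le f(G_i)-1=k_i$, so each component is indeed a minimal $(1,k_i)$-polar obstruction that is $(1,k_i+1)$-polar. The genuine gap is the ``local stability'' lower bound $f(H)\le f(H-v)+1$. Your justification -- add $v$ as a fresh singleton clique to the cluster side of an optimal partition of $H-v$ -- fails whenever $v$ has a neighbor in that cluster side without being completely adjacent to the clique containing it (or has neighbors in two of its cliques): the augmented side then contains an induced $P_3$ and is not a cluster. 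In fact the inequality is simply false: by \Cref{lem:discMonopObsmin}, $\overline{2P_3}$ (or $\overline{K_1+C_5}$) is a minimal $(1,s)$-polar obstruction for every $s\ge 2$, so $f(\overline{2P_3})=\infty$ although $f(\overline{2P_3}-v)\le 2$ for every vertex $v$.

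This broken step is exactly what you use to conclude $f(G)\le k+1$; without it your argument only yields $f(G)\ge k+1$, i.e.\ $\sum_{i=1}^t k_i\ge k-t+1$, and neither the identity $\sum_{i=1}^t k_i=k-t+1$ nor the parenthetical claim that $G$ is $(1,k+1)$-polar follows. To close the gap you would have to exhibit some component $G_i$ and vertex $v$ with $f(G_i)-f(G_i-v)\le 1$; equivalently, you must rule out components that are connected minimal $(1,m)$-polar obstructions which are monopolar but not $(1,m+1)$-polar. That is precisely where the paper uses the $P_4$-sparse hypothesis: by \Cref{cor:spiderNOobsmin} the complement of each component is a disconnected minimal $(k_i,1)$-polar obstruction, the classification in \Cref{thm:allDisc(s1)obsmin} shows this complement is $(k_i+1,1)$-polar, and then \Cref{lem:kappa} pins down $k=t-1+\sum_{i=1}^t k_i$. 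So your closing remark that the $P_4$-sparse hypothesis is never used is not a bonus of the argument but exactly the point left unproved.
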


	\begin{proof}
	This is a generalization of Lemma 9 in \cite{contrerasDAM281}, which states
	the same result for cographs. As in the original proof, it is easy to argue
	that each component $G_i$ is a minimal $(1,k_i)$-polar obstruction that is
	$(1,k)$-polar, where $k_i$ is the maximum integer such that any proper induced
	subgraph of $G_i$ is $(1,k_i)$-polar.

	Then, by \Cref{cor:spiderNOobsmin}, $\overline{G_i}$ is a disconnected minimal
	$(k_i,1)$-polar obstruction that is $(k,1)$-polar, and we have from
	\Cref{thm:allDisc(s1)obsmin} that $\overline{G_i}$ is a $(k_i+1,1)$-polar
	graph, so $G_i$ is $(1,k_i+1)$-polar. Finally, the result follows from
	\Cref{lem:kappa}.
	\end{proof}

Analogous results to those obtained in this section for $P_4$-sparse graphs will
be given for $P_4$-extendible graphs in the next section. As the reader can
notice, the technique used to obtain the results for both classes is the same,
the differences come from the connectedness characterizations for said families.


\subsection{Connected $P_4$-extendible minimal $(s,1)$-polar obstructions}

We begin with some easily verifiable facts, stated without proof and bundled to
facilitate future references.

\begin{remark}
\label{rem:extNoObs}
Let $s,k$ be either in $\mathbb{N}$ or equal to $\infty$.
\begin{enumerate}
  \item \label{part:P4F} $P_4$ and $F$ are split graphs but they are neither
    $(0,\infty)$- nor $(\infty, 0)$-polar graphs.

	\item \label{part:C5P5P} $C_5, P_5$, and $P$ are $(1,2)$- and $(2,1)$-polar,
	 but they are neither $(1,1)$-, $(\infty,0)$- nor $(0,\infty)$-polar graphs.

  \item \label{part:ext} An extension graph $G$ is a minimal $(s,k)$-polar
    obstruction if and only if $G \cong C_5$ and $s = k = 1$.
\end{enumerate}
\end{remark}

The following proposition allows us to show that any connected $P_4$-extendible
minimal $(1, k)$-polar obstruction, other than $C_5$, has a disconnected
complement.

\begin{lemma}
\label{lem:SspiderNoObs}
Let $k$ be a nonnegative integer, and let $G$ be a separable extension graph.
If $H = (S, K, R)$ is a $G$-spider with nonempty head, then $H$ is not a
minimal $(1,k)$-polar obstruction.
\end{lemma}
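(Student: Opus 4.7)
The plan is to mimic the proof of \Cref{prop:Sp(1k)}: pick an appropriate vertex $v$ of $H$, take a $(1,k)$-polar partition $(A,B)$ of $H - v$ (which exists if $H$ is a minimal $(1,k)$-polar obstruction), and derive a contradiction either by extending $(A,B)$ to a $(1,k)$-polar partition of $H$ or by showing that $H$ itself is $(1,k)$-polar. Because the shape of $S$ and $K$ depends on $G$, I would split into cases on $G \in \{P_4, F, \overline{F}, P, \overline{P}\}$.

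If $G = P_4$, then $H$ is precisely a thin spider with nonempty head in the sense of \Cref{thm:connCharSparse}, and the conclusion is exactly \Cref{prop:Sp(1k)}. If $G \in \{F, \overline{F}\}$, then $S$ is independent and $K$ is a clique of size at least two, and the argument of \Cref{prop:Sp(1k)} transfers almost verbatim: picking any $\sigma \in S$ and any $(1,k)$-polar partition $(A,B)$ of $H - \sigma$, we have $|K \cap A| \le 1$ and $K \cap B \neq \varnothing$, and the complete adjacency between $R$ and $K$ yields the dichotomy. Either $K \subseteq B$, in which case $(A \cup \{\sigma\}, B)$ is a $(1,k)$-polar partition of $H$ because $\sigma$ is non-adjacent to $S \setminus \{\sigma\}$ and to $R$; or $K \cap A \neq \varnothing$, forcing $R \cap A = \varnothing$ and hence $R \subseteq B$, and the cluster component of $B$ containing $K \cap B$ must contain $R$, making $R$ a clique, so that $H$ decomposes as $S \sqcup (K \cup R)$ with $S$ independent and $K \cup R$ a clique, exhibiting $H$ as a split graph. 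Either conclusion contradicts $H$ being a $(1,k)$-polar obstruction.

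The remaining cases are $G = P$ (where $K$ induces $P_3$) and $G = \overline{P}$ (where $S$ induces $K_2 + K_1$), where the above dichotomy fails. For each, I would split on whether $H[R]$ is a clique. When it is, an explicit $(1,2)$-polar partition exists: for $G = P$ with $K = \{k_1, k_2, k_3\}$ and $k_2$ the central midpoint, the choice $A = \{k_1, k_3, s_2\}$, $B = \{s_1\} \cup (\{k_2\} \cup R)$ makes $B$ the disjoint union of the singleton $\{s_1\}$ and the clique $\{k_2\} \cup R$; an analogous choice $A = \{k_1, s_3\}$, $B = \{s_1, s_2\} \cup (\{k_2\} \cup R)$ handles $G = \overline{P}$. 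When $H[R]$ is not a clique, fix non-adjacent $r_1, r_2 \in R$: then $\{k_1, k_2, k_3, r_1, r_2\}$ induces $K_1 \oplus C_4$ (with apex $k_2$) for $G = P$, and $\{s_1, s_2, k_1, k_2, r_1, r_2\}$ induces $K_1 \oplus (K_2 + P_3)$ (with apex $k_1$) for $G = \overline{P}$. Both are proper induced subgraphs of $H$ and a direct check shows that neither is $(1,k)$-polar for any $k$, so $H$ is not minimal for $k \ge 2$. For $k \in \{0,1\}$, $H$ properly contains one of the standard minimal obstructions $K_2$, $2K_2$ (on $\{s_1, s_2, s_3, k_2\}$ inside $\overline{P}$), or $C_4$ (on $\{s_1, k_1, k_2, k_3\}$ inside $P$).

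The main obstacle is verifying that $K_1 \oplus C_4$ and $K_1 \oplus (K_2 + P_3)$ are indeed $(1,k)$-polar obstructions for every $k$: placing the universal apex in the independent part $A$ forces $|A| = 1$ and leaves a non-clique component in $B$, while placing the apex in $B$ forces the rest of $B$ to be a clique inside $C_4$ or $K_2 + P_3$, which in turn leaves in $A$ a set that cannot be independent in either of those graphs. This bookkeeping substitutes for the clean ``$|K \cap A| \le 1$'' inequality available when $K$ is a clique, and is the step where the proof genuinely goes beyond the template of \Cref{prop:Sp(1k)}.
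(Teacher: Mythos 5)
Your proposal is correct. For $G \in \{P_4, F, \overline{F}\}$ it coincides with the paper's proof, which likewise observes that in these cases the midpoints form a clique of size at least two and the endpoints an independent set, and then repeats the argument of \Cref{prop:Sp(1k)}; but for $G \in \{P, \overline{P}\}$ you take a genuinely different route. The paper keeps the deletion--extension template: it deletes the unique degree-one vertex $v$ of $G$, whose support $w$ is a midpoint, takes a $(1,k)$-polar partition $(A,B)$ of $H-v$, and shows that $w \in A$ would force $R \cap A = R \cap B = \varnothing$, contradicting the nonempty head, so $w \in B$ and $(A \cup \{v\}, B)$ is a $(1,k)$-polar partition of $H$; this is short, uniform in $k$, and needs no case distinction on the head. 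You instead split on whether $H[R]$ is a clique: when it is, your explicit partitions are valid and show $H$ is $(1,2)$-polar, hence not an obstruction at all for $k \ge 2$; when it is not, the sets you exhibit do induce $K_1 \oplus C_4 = W_4$ and $K_1 \oplus (K_2 + P_3)$, which are exactly the complements of $E_1$ and $E_7$ of \Cref{fig:essentials} — so, besides your direct check, \Cref{lem:discMonopObsmin} already guarantees they are not $(1,k)$-polar for any $k$ — and they are proper induced subgraphs of $H$, so minimality fails; finally, your witnesses for small $k$, the $C_4$ induced by $\{s_1,k_1,k_2,k_3\}$ and the $2K_2$ induced by $\{s_1,s_2,s_3,k_2\}$, lie entirely inside $S \cup K$, so they settle $k \in \{0,1\}$ in both subcases (worth stating explicitly, since as written that sentence is attached only to the non-clique subcase). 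The paper's argument buys brevity and uniformity in $k$; yours buys a sharper structural byproduct (a $P$- or $\overline{P}$-spider with nonempty head is $(1,k)$-polar for $k \ge 2$ precisely when its head is a clique) and an explicit link to the obstruction catalogue of \Cref{fig:essentials}.
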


\begin{proof}
	The proof is divided in three cases, depending on $G$. If $G$ is isomorphic to
	$P_4, F$ or $\overline{F}$ then the midpoints set of $G$ form a clique with at
	least two vertices, while its endpoints set is an independent set, in which
	case the proof is the same as \Cref{prop:Sp(1k)}.

	Now, assume that $G \cong P$. Let $v$ be the only vertex of $G$ of degree one,
	and let $w$ be the support vertex of $v$; notice that $w$ is a midpoint of
	$G$. Assume for a contradiction that $H$ is a minimal $(1,k)$-polar
	obstruction, and let $(A,B)$ be a $(1,k)$-polar partition of $H-v$. If $w \in
	A$, then there are two midpoints of $G$ in $B$, but in such a case $R \cap A$
	and $R \cap B$ are both empty sets, which is impossible. Then, $w \in B$ and
	$(A \cup \{v\}, B)$ is a $(1,k)$-polar partition of $H$, a contradiction.
	Hence, $H$ is not a minimal $(1,k)$-polar obstruction.

	Finally, assume that $G\cong \overline P$, and let $v$ and $w$ as in the
	previous paragraph. Suppose for a contradiction that $H$ is a minimal
	$(1,k)$-polar obstruction, and let $(A,B)$ be a $(1,k)$-polar partition of
	$H-v$. If $w \in A$, the other midpoint of $G$, $w'$, is in $B$ and at least
	one of the endpoints of $G$ that is adjacent to $w'$ is also in $B$.
	Therefore, $R \cap B = \varnothing$. But $w \in A$, so also $R \cap A=
	\varnothing$, which is impossible. Hence, $w \in B$ and $(A \cup \{v\}, B)$ is
	a $(1,k)$-polar partition of $H$, a contradiction. Then, $H$ is not a minimal
	$(1,k)$-polar obstruction.
\end{proof}

\begin{corollary}
\label{cor:SspiderNOobsmin}
Let $k$ be a nonnegative integer, and let $H$ be a $P_4$-extendible minimal
$(1, k)$-polar obstruction. If $H\not\cong C_5$, then $H$ or its complement is
disconnected.
\end{corollary}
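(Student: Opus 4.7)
The plan is to invoke the connectedness characterization of $P_4$-extendible graphs (\Cref{thm:connChar}) and rule out, one by one, the cases in which both $H$ and $\overline H$ are connected.

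Suppose for contradiction that $H$ and $\overline{H}$ are both connected. Since $K_1$ is a $(1,k)$-polar graph, $H$ is a nontrivial $P_4$-extendible graph, so by \Cref{thm:connChar} exactly one of the four enumerated conditions holds. The assumption that $H$ and $\overline H$ are both connected rules out the first two conditions, leaving only two possibilities: either $H$ is an extension graph, or there exists a (unique) separable extension graph $S$ such that $H$ is an $S$-spider with nonempty head.

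In the first case, \Cref{rem:extNoObs}(\ref{part:ext}) states that the only extension graph that can be a minimal $(s,k)$-polar obstruction is $C_5$; since $H\not\cong C_5$, this contradicts the fact that $H$ is a minimal $(1,k)$-polar obstruction. In the second case, \Cref{lem:SspiderNoObs} directly says that an $S$-spider with nonempty head is never a minimal $(1,k)$-polar obstruction, again yielding a contradiction. Hence $H$ or $\overline H$ must be disconnected.

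There is no real obstacle here; the technical content has been absorbed into \Cref{lem:SspiderNoObs} and \Cref{rem:extNoObs}, and the corollary is essentially a bookkeeping argument that matches the structural dichotomy of \Cref{thm:connChar} against the polar-obstruction information we have collected for each atomic structure type.
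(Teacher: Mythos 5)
Your proof is correct and follows essentially the same route as the paper's: invoke \Cref{thm:connChar} to reduce the connected-with-connected-complement case to the extension-graph and $S$-spider alternatives, then eliminate these via \Cref{rem:extNoObs}(\ref{part:ext}) and \Cref{lem:SspiderNoObs}. The extra remark that $H$ is nontrivial is a harmless refinement of the paper's argument.
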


\begin{proof}
	Since $H$ is a $P_4$-extendible graph, we have from \Cref{thm:connChar} that,
	if $H$ and $\overline H$ are connected, then $H$ is either an extension graph
	or a $G$-spider (with nonempty head) for some separable extension graph $G$.
	Nevertheless we have from \Cref{part:ext} of \Cref{rem:extNoObs} and
	\Cref{lem:SspiderNoObs} that this is not the case, so either $H$ or its
	complement is disconnected.
\end{proof}

In the last two results of this section we provide a complete structural
characterization for disconnected $P_4$-extendible minimal $(1,k)$-polar
obstructions. It is worth noticing that statements in
\Cref{lem:extDisc(1k)obsmin,lem:disc(1k)obsminExt} are the same as those in
\Cref{lem:kappa,lem:disc(1k)obsmin}, respectively, except by the obvious
difference of the graph class.

\begin{lemma}
\label{lem:extDisc(1k)obsmin}
Let $t$ be an integer, $t \ge 2$, and for each $i \in \{1, \dots, t\}$, let
$G_i$ be a connected $P_4$-extendible minimal $(1,k_i)$-polar obstruction which
is a $(1, k_i+1)$-polar graph. If $G = G_1 + \dots + G_t$, then $G$ is a minimal
$(1,k)$-polar obstruction if and only if $k = t-1+\sum_{i=1}^t k_i$.
\end{lemma}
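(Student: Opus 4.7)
The $(\Leftarrow)$ direction is immediate from Lemma \ref{lem:basicConnected}, which, for $k = t-1+\sum_{i=1}^{t} k_i$, already yields that $G = G_1 + \cdots + G_t$ is a minimal $(1,k)$-polar obstruction and also a $(1,k+1)$-polar graph. For the $(\Rightarrow)$ direction I must rule out every $\kappa \neq k$. The case $\kappa \geq k+1$ is trivial: $G$ being $(1,k+1)$-polar is a fortiori $(1,\kappa)$-polar and therefore not even a $(1,\kappa)$-polar obstruction. Hence the real content lies in the range $\kappa < k$, where I must exhibit a proper induced subgraph of $G$ that is still not $(1,\kappa)$-polar.

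The plan is to mimic the proof of Lemma \ref{lem:kappa}. The only structural ingredient that changes is the replacement of Corollary \ref{cor:spiderNOobsmin} by Corollary \ref{cor:SspiderNOobsmin}, which introduces the additional possibility that $G_i \cong C_5$. Concretely, I would first establish the following key claim: for each $i$ and each integer $\kappa_i$ with $0 \leq \kappa_i < k_i$, the graph $G_i$ contains a proper induced subgraph $H_i$ that is a connected $P_4$-extendible minimal $(1,\kappa_i)$-polar obstruction and a $(1,\kappa_i+1)$-polar graph. If $G_i \cong C_5$, then necessarily $k_i = 1$, so the only relevant $\kappa_i$ is $0$, and $H_i = K_2 \leq C_5$ works since $K_2$ is the (connected) minimal $(1,0)$-polar obstruction and is trivially $(1,1)$-polar. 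Otherwise, Corollary \ref{cor:SspiderNOobsmin} forces $\overline{G_i}$ to be disconnected; since $G_i$ is $(1,k_i+1)$-polar, $\overline{G_i}$ is $(k_i+1,1)$-polar, so Theorem \ref{thm:allDisc(s1)obsmin} constrains $\overline{G_i}$ to one of $2K_{k_i+1}$, $K_2+(2K_1 \oplus K_{k_i})$, or $K_1+(C_4 \oplus K_{k_i-1})$ (the seven graphs of Figure \ref{fig:essentials} are not monopolar and are therefore excluded). A routine case-by-case inspection shows that each such $\overline{G_i}$ properly contains an analogous disconnected minimal $(\kappa_i,1)$-polar obstruction with the parameter $k_i$ replaced by $\kappa_i$ (for instance $2K_{\kappa_i+1} \leq 2K_{k_i+1}$); taking complements produces the required $H_i \leq G_i$.

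With the key claim in hand, I close the proof exactly as in Lemma 8 of \cite{contrerasDAM281}, the very reference invoked at the corresponding step of Lemma \ref{lem:kappa}: given $\kappa < k$, appropriate choices of $H_i \leq G_i$ (allowing $H_i = G_i$ for some indices, and possibly dropping some of the $G_j$ altogether) produce, via Lemma \ref{lem:basicConnected}, a proper induced subgraph of $G$ that is itself a minimal $(1,\kappa)$-polar obstruction and in particular is not $(1,\kappa)$-polar, as required. The main obstacle I anticipate is precisely the $C_5$ exception, which has no analogue in the $P_4$-sparse setting; once this single case is dispatched by the observation $K_2 \leq C_5$, the remainder of the argument proceeds verbatim along the lines of the proof of Lemma \ref{lem:kappa}.
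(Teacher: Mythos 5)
Your proposal is correct and follows essentially the same route as the paper's proof: the backward direction via Lemma~\ref{lem:basicConnected}, and for $\kappa<k$ the use of Corollary~\ref{cor:SspiderNOobsmin} together with Theorem~\ref{thm:allDisc(s1)obsmin} to extract from each $G_i$ a proper induced subgraph that is a minimal $(1,\kappa_i)$-polar obstruction and $(1,\kappa_i+1)$-polar, finishing as in Lemma~8 of \cite{contrerasDAM281}. In fact you are slightly more explicit than the paper in dispatching the $C_5$ exception (via $k_i=1$ and $K_2\le C_5$), which the paper's wording glosses over.
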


\begin{proof}
	Let $k = t-1+\sum_{i=1}^t k_i$. We have from \Cref{lem:basicConnected} that
	$G$ is a minimal $(1,k)$-polar obstruction which is $(1,k+1)$-polar, so we
	just need to show that $G$ is not a minimal $(1,\kappa)$-polar obstruction for
	any $\kappa<k$.

	Let $G_i$ be a connected $P_4$-extendible minimal $(1, k_i)$-polar obstruction
	which is $(1, k_i+1)$-polar. By \Cref{cor:SspiderNOobsmin}, we have that
	either $G_i \cong C_5$ or $\overline{G_i}$ is a disconnected minimal $(k_i,
	1)$-polar obstruction which is a $(k_i+1, 1)$-polar graph. However, it follows
	from \Cref{thm:allDisc(s1)obsmin} that, for any nonnegative integer $\kappa_i$
	such that $\kappa_i < k_i$, $G_i$ contains a proper induced subgraph $G'_i$
	that is both, a $P_4$-extendible minimal $(1,\kappa_i)$-polar obstruction and
	a $(1,\kappa_i+1)$-polar graph. From here on, the proof follows as the proof
	of Lemma 8 in \cite{contrerasDAM281}.
\end{proof}

\begin{lemma}
\label{lem:disc(1k)obsminExt}
Let $k$ be a nonnegative integer.  If $G$ is a disconnected $P_4$-extendible
minimal $(1,k)$-polar obstruction with components $G_1,\dots,G_t$, then there
exist nonnegative integers $k_1, \dots, k_t$ such that for each $i \in \{1,
\dots, t\}$, $G_i$ is a connected minimal $(1,k_i)$-polar obstruction that is a
$(1,k_i+1)$-polar graph, and $\sum_{i=1}^t k_i = k-t+1$. (Notice that $k_i<k$
for any $i \in \{1, \dots, t\}$, and $G$ is a $(1,k+1)$-polar graph.)
\end{lemma}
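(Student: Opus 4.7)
The plan is to mirror the proof of \Cref{lem:disc(1k)obsmin}, substituting \Cref{cor:SspiderNOobsmin} for \Cref{cor:spiderNOobsmin} and \Cref{lem:extDisc(1k)obsmin} for \Cref{lem:kappa}. The only genuinely new ingredient is that the connectedness characterization for $P_4$-extendible graphs (\Cref{thm:connChar}) allows an additional connected case with connected complement, namely $C_5$, that did not appear in the $P_4$-sparse setting and must be handled separately.

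I would begin by defining, for each component $G_i$, the integer $k_i$ as the largest nonnegative integer such that every proper induced subgraph of $G_i$ is $(1,k_i)$-polar. Because $G = G_1 + \dots + G_t$ is itself a minimal $(1,k)$-polar obstruction, a standard argument (identical to the one in Lemma 9 of \cite{contrerasDAM281}) shows that each $G_i$ must then be a minimal $(1,k_i)$-polar obstruction: if some $G_i$ were properly contained in a smaller $(1,k_i)$-polar obstruction, then combining that smaller obstruction with the other components would produce a proper induced subgraph of $G$ that is not $(1,k)$-polar, contradicting minimality.

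The central new step is to promote each $G_i$ from a minimal $(1,k_i)$-polar obstruction to a $(1,k_i+1)$-polar graph. Here I would apply \Cref{cor:SspiderNOobsmin} to a connected $G_i$: either $G_i \cong C_5$, or $\overline{G_i}$ is disconnected. In the first case, part \ref{part:C5P5P} of \Cref{rem:extNoObs} tells us that $C_5$ is $(1,2)$-polar but not $(1,1)$-polar, so $k_i = 1$ and the desired $(1,k_i+1)$-polarity is immediate. In the second case, $\overline{G_i}$ is a disconnected $P_4$-extendible minimal $(k_i,1)$-polar obstruction, and \Cref{thm:allDisc(s1)obsmin} restricts it to one of finitely many explicit graphs; a direct check shows that every graph on that list is $(k_i+1,1)$-polar, and taking complements yields $(1,k_i+1)$-polarity for $G_i$.

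With each $G_i$ established to be a connected $P_4$-extendible minimal $(1,k_i)$-polar obstruction that is also $(1,k_i+1)$-polar, \Cref{lem:extDisc(1k)obsmin} (the converse direction that we already proved) applies and forces the identity $k = t - 1 + \sum_{i=1}^t k_i$, completing the argument. The main obstacle I anticipate is verifying $(k_i+1,1)$-polarity for each obstruction listed in \Cref{thm:allDisc(s1)obsmin}, especially the sporadic graphs of \Cref{fig:essentials}; however, since the list is finite and all the graphs are small, this amounts to routine case checking rather than any real structural difficulty.
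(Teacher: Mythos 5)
Your proposal is correct and follows essentially the same route as the paper's proof: define $k_i$ as in Lemma 9 of \cite{contrerasDAM281}, use \Cref{cor:SspiderNOobsmin} to split into the $C_5$ case and the disconnected-complement case, invoke \Cref{thm:allDisc(s1)obsmin} (together with \Cref{rem:extNoObs}) to upgrade each $G_i$ to a $(1,k_i+1)$-polar graph, and finish with \Cref{lem:extDisc(1k)obsmin}. The only cosmetic difference is that you justify the $C_5$ case via part \ref{part:C5P5P} of \Cref{rem:extNoObs} while the paper cites part \ref{part:ext}; the content is the same.
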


\begin{proof}
	This is a generalization of Lemma 9 in \cite{contrerasDAM281}, which states
	the same result for cographs. As in the original proof, it is easy to argue
	that each component $G_i$ is a minimal $(1,k_i)$-polar obstruction that is
	$(1,k)$-polar, where $k_i$ is the maximum integer such that any proper induced
	subgraph of $G_i$ is $(1,k_i)$-polar.

	Then, by \Cref{cor:SspiderNOobsmin}, $\overline{G_i}$ is either $C_5$ or a
	disconnected $P_4$-extendible minimal $(k_i, 1)$-polar obstruction which is
	$(k, 1)$-polar. However, it follows from \Cref{thm:allDisc(s1)obsmin} and
	\Cref{part:ext} of \Cref{rem:extNoObs} that $\overline{G_i}$ is a
	$(k_i+1,1)$-polar graph, so $G_i$ is a connected $P_4$-extendible minimal
	$(1,k_i)$-polar obstruction which is a $(1, k_i)$-polar graph. Finally, the
	result follows from \Cref{lem:extDisc(1k)obsmin}.
\end{proof}

The next section is devoted to the most meaningful results of this paper,
including complete characterizations of minimal $(s, 1)$- $(\infty, 1)$- and
$(\infty, \infty)$-polar obstructions on both, $P_4$-sparse and $P_4$-extendible
graphs.


\section{Main results}
\label{sec:main}

In order to analyze the minimal obstructions for polarity in the classes of
$P_4$-sparse and $P_4$-extendible graphs we need a final lemma. Notice that it
holds for general graphs.

\begin{lemma}
\label{lem:charObsmin}
If $G$ is a graph, then $G$ is a disconnected minimal polar obstruction if and
only if $G \cong P_3 + H$ where $H$ is a minimal monopolar obstruction which is
not a minimal polar obstruction.
\end{lemma}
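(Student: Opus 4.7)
The plan is to first establish an auxiliary structural characterization of polarity for disconnected graphs, then apply it together with minimality to pin down the structure of $G$. I would show that a disconnected graph $G_1+\cdots+G_t$ with $t\geq 2$ is polar if and only if either (i) at most one component is not a cluster and that component (if any) is polar, or (ii) every component is monopolar. This holds because in any polar partition $(A,B)$, the complete multipartite set $A$ is either contained in a single component (forcing every other component, being entirely in $B$, to be a cluster) or is an independent set spanning several components (in which case each $G_i$ must be monopolar via its induced partition).

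For the forward direction, let $G$ be a disconnected minimal polar obstruction. Since $G$ is not polar, condition (ii) fails, so some component $G_1$ is not monopolar; by minimality $G_1$ is polar, and in particular not a cluster. Condition (i) must also fail, so there exists another non-cluster component $G_j$. For every $v\in V(G_j)$ the subgraph $G-v$ must be polar; with $G_1$ still present and non-cluster non-monopolar, only condition (i) can hold for $G-v$, forcing every other component of $G-v$ to be a cluster. Letting $v$ range over $V(G_j)$ yields that $G_j-v$ is a cluster for every $v$, which together with $G_j$ being connected and non-cluster forces $G_j\cong P_3$. A short case analysis, considering $G-v$ for $v$ in a hypothetical second non-cluster component or in any cluster component, shows that these cases leave two non-cluster components plus the non-monopolar $G_1$ inside $G-v$, contradicting polarity. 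Hence $t=2$ and $G=P_3+G_1$.

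To finish, taking $v\in V(G_1)$ and analyzing $G-v=P_3+(G_1-v)$, since $P_3$ is monopolar but not a cluster, polarity of $G-v$ via (ii) forces $G_1-v$ to be monopolar. Combined with $G_1$ being non-monopolar, $G_1$ is a minimal monopolar obstruction, so setting $H=G_1$ yields the required form. Since every proper induced subgraph of a minimal monopolar obstruction is monopolar and hence polar, $H$ fails to be a minimal polar obstruction precisely when $H$ itself is polar, which is the condition we have obtained. The converse is a direct verification: $G=P_3+H$ fails both (i) and (ii), so it is not polar, while polarity of $G-v$ follows from (i) when $v\in V(P_3)$ (using that $H$ is polar) and from (ii) when $v\in V(H)$ (using minimality of $H$ as a monopolar obstruction). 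The main obstacle is the forward argument forcing $G_j\cong P_3$ and ruling out further cluster or $P_3$ components; once the polarity characterization for disconnected graphs is in place, the remainder is careful bookkeeping.
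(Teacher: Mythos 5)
Your proof is correct, but your forward implication follows a genuinely different route from the paper's. The paper never analyzes vertex-deleted subgraphs in that direction: it observes that a disjoint union of monopolar graphs is monopolar, so some component of $G$ contains a minimal monopolar obstruction $H$, which is polar by minimality of $G$; that $G$ has no complete components, so a second component contains an induced $P_3$; and then, since the ``if'' direction already shows $P_3+H$ is a polar obstruction, minimality (\Cref{rem:hered}) immediately yields $G \cong P_3 + H$. You instead prove an explicit polarity criterion for disconnected graphs (your conditions (i)/(ii)) and use single-vertex deletions to force the second non-cluster component to be exactly $P_3$, to exclude any further components, and to show the non-monopolar component is a minimal monopolar obstruction. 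Your argument is longer and more case-based, but it is self-contained in that it does not feed the ``if'' direction back into the ``only if'' direction, and the (i)/(ii) criterion is a reusable structural fact; the paper's argument buys brevity by exploiting that the target graph $P_3+H$ is already known to be an obstruction. Two small points to tighten: when $v \in V(G_1)$, condition (i) may hold for $G-v$ rather than (ii) (then $P_3$ is the unique non-cluster component, so $G_1 - v$ is a cluster, hence still monopolar), so both cases should be acknowledged; and in the converse, applying (i) to $(P_3 - v) + H$ implicitly uses that $H$ is connected, which does hold because a disconnected minimal monopolar obstruction would have all components monopolar and would therefore be monopolar itself.
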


\begin{proof}
	First, assume that $H$ is a minimal $(1,\infty)$-polar obstruction which is
	not a minimal polar obstruction, and let $G = P_3 + H$. Assume for a
	contradiction that $G$ has a polar partition $(A,B)$. Notice that $G[A]$ is
	not an empty graph because $H$ is not a $(1,\infty)$-polar graph. Then $G[A]$
	is completely contained in a component of $G$. Moreover, since any component
	of $G$ is either $P_3$ or a component of $H$, and $G[B]$ is a $P_3$-free
	graph, we have that $A\cap V_H = \varnothing$ so $H$ is a cluster, a
	contradiction. Hence, $G$ is not a polar graph.

	Let $v\in V_G$. If $v\in V_H$, let $(A,B)$ be a $(1, \infty)$-polar partition
	of $H-v$, and let $w\in V_G - V_H$ be a vertex of degree 1. Then $(A', V_G -
	(A'\cup \{v\}))$, where $A' = A\cup \{w\}$, is a $(1,\infty)$-polar partition
	of $G-v$. Now, let $v \in V_G - V_H$. Then, since $H$ is a polar graph and
	$P_3-v$ is a cluster, $G-v$ is a polar graph. Therefore $G$ is a disconnected
	minimal polar obstruction.

	For the converse, assume that $G$ is a disconnected minimal polar obstruction.
	Notice that, if all the components of $G$ are $(1,\infty)$-polar graphs, then
	$G$ is also a $(1,\infty)$-polar graph, so $G$ has a component $H'$ that
	contains a minimal $(1,\infty)$-polar obstruction $H$ as an induced subgraph.
	Notice that by the minimality of $G$, $H$ is a polar graph. In addition, $G$
	has no complete components, so any component of $G$ contains an induced $P_3$,
	and therefore $G$ contains the disjoint union of $P_3$ with a minimal
	$(1,\infty)$-obstruction that is a polar graph ($H$). Together with the
	minimality of $G$, this implies that $G \cong P_3 + H$.
\end{proof}


\subsection{$P_4$-sparse graphs}

The following result provides a  complete recursive construction of $P_4$-sparse
minimal $(s,1)$-polar obstructions.

\begin{theorem} \label{theo:charSp(s1)obsmin}
Let $s$ be an integer, $s \ge 2$. If $G$ is a $P_4$-sparse graph, then $G$ is a
minimal $(s,1)$-polar obstruction if and only if $G$ satisfies exactly one of
the following assertions:
\begin{enumerate}
	\item $G$ is isomorphic to one of the four cographs depicted in
    \Cref{fig:essentials}.

	\item $G$ is isomorphic to some of $2K_{s+1}, K_2 + (K_s\oplus 2K_1)$ or $K_1
    + (K_{s-1}\oplus C_4)$.

	\item The complement of $G$ is disconnected with components $G_1, \dots, G_t$,
    each $G_i$ is a minimal $(1, s_i)$-polar obstruction whose complement is
    different from the graphs in \Cref{fig:essentials}, and $s = t-1+
    \sum_{i=1}^t s_i$.
\end{enumerate}
\end{theorem}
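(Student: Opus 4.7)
The plan is to prove the equivalence by splitting into cases according to whether $G$ is connected, and then invoking the structural results of \Cref{sec:discM(s1)PO,sec:connectedM(s1)PO}.

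For the forward direction, assume $G$ is a $P_4$-sparse minimal $(s,1)$-polar obstruction. If $G$ is disconnected, \Cref{thm:allDisc(s1)obsmin} yields four possibilities. Three of them, namely $2K_{s+1}$, $K_2+(2K_1\oplus K_s)$, and $K_1+(C_4\oplus K_{s-1})$, are cographs, hence $P_4$-sparse, and land in condition 2. The remaining possibility is that $G$ is one of the seven graphs of \Cref{fig:essentials}; here I would argue that precisely four of them, $E_1, E_2, E_3, E_7$, are cographs (hence $P_4$-sparse), while $E_{10}, E_{11}, E_{12}$ respectively contain induced $C_5$, $P$, and $\overline{P_5}$, all forbidden in $P_4$-sparse graphs, and are therefore excluded. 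This places $G$ under condition 1.

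If instead $G$ is connected, \Cref{cor:spiderNOobsmin} forces $\overline{G}$ to be disconnected, so $\overline{G}$ is a disconnected $P_4$-sparse minimal $(1,s)$-polar obstruction. Applying \Cref{lem:disc(1k)obsmin} to $\overline{G}$ produces components $G_1, \dots, G_t$ and nonnegative integers $s_1, \dots, s_t$ with $s = t - 1 + \sum_i s_i$, each $G_i$ being a connected $P_4$-sparse minimal $(1, s_i)$-polar obstruction that is also $(1, s_i+1)$-polar. To land in condition 3, I would further observe that no $\overline{G_i}$ can appear in \Cref{fig:essentials}: by \Cref{lem:discMonopObsmin}, every graph in that figure is a minimal $(\infty,1)$-polar obstruction, whose complement fails to be $(1,k)$-polar for any finite $k$, contradicting the $(1, s_i+1)$-polarity of $G_i$.

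For the backward direction, conditions 1 and 2 list cographs (hence $P_4$-sparse) that have already been verified as minimal $(s,1)$-polar obstructions via \Cref{lem:discMonopObsmin} and \Cref{lem:H+K2,lem:H+K1}. For condition 3, \Cref{lem:kappa} (with $k = s$) yields that $\overline{G} = G_1 + \cdots + G_t$ is a $P_4$-sparse minimal $(1, s)$-polar obstruction; since the class of $P_4$-sparse graphs is closed under disjoint union (\Cref{rem:sparsePreserv}) and self-complementary, $G$ is itself a $P_4$-sparse minimal $(s, 1)$-polar obstruction. The main subtlety is the bookkeeping in the connected case: one must confirm that the technical $(1, s_i+1)$-polarity hypothesis of \Cref{lem:disc(1k)obsmin} is precisely what rules out $\overline{G_i}$ belonging to \Cref{fig:essentials}, so that the three conditions in the statement are mutually exclusive and together exhaust all $P_4$-sparse minimal $(s,1)$-polar obstructions.
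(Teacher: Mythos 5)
Your proposal is correct and takes essentially the same route as the paper's proof: split on whether $G$ is connected, apply \Cref{thm:allDisc(s1)obsmin} in the disconnected case, and in the connected case use \Cref{cor:spiderNOobsmin} to pass to the disconnected complement and conclude via \Cref{lem:disc(1k)obsmin} (with \Cref{lem:kappa} for the converse). The extra details you supply---identifying $E_1,E_2,E_3,E_7$ as the only $P_4$-sparse graphs in \Cref{fig:essentials}, and excluding $\overline{G_i}$ from that figure via the $(1,s_i+1)$-polarity clause---merely make explicit what the paper's terser argument leaves implicit.
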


\begin{proof}
  If $G$ is disconnected, it follows from \Cref{thm:allDisc(s1)obsmin} that $G$
  is a minimal $(s,1)$-polar obstruction if and only if $G$ is either a
  $P_4$-sparse graph depicted in \Cref{fig:essentials} (which can easily be
  checked to be a cograph), or it is isomorphic to some of $2K_{s+1}, K_2 +
  (K_s\oplus 2K_1)$ or $K_1 + (K_{s-1}\oplus C_4)$. Otherwise, if $G$ is
  connected, \Cref{cor:spiderNOobsmin} implies that $\overline{G}$ is a
  disconnected $P_4$-sparse minimal $(1, s)$-polar obstruction, and the result
  follows from \Cref{lem:disc(1k)obsmin}.
\end{proof}

\begin{corollary}
\label{cor:psE}
There are exactly nine $P_4$-sparse minimal $(2,1)$-polar obstructions; they are
the graphs $E_1, \dots, E_9$ depicted in
\Cref{fig:essentials,fig:P4ext(21)obsmin}.
\end{corollary}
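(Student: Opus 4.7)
The plan is to apply \Cref{theo:charSp(s1)obsmin} with $s=2$, enumerating the obstructions arising from each of its three cases and checking that the total equals nine.

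For case~1 of the theorem, I would identify the four cographs among the seven graphs in \Cref{fig:essentials}. The graphs $E_{10}=K_1+C_5$, $E_{11}=K_1+P$, and $E_{12}=K_1+\overline{P_5}$ each contain $C_5$, $P$, or $\overline{P_5}$ as an induced subgraph, and a routine count shows that each of these five-vertex graphs induces multiple $P_4$s on its own vertices, so none of them is even $P_4$-sparse. The remaining four graphs $E_1,E_2,E_3,E_7$ are $P_4$-free by inspection. Case~2, after substituting $s=2$ into the parameterized family, contributes the three cographs $2K_3$, $K_2+(K_2\oplus 2K_1)$, and $K_1+(K_1\oplus C_4)$.

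For case~3, I would enumerate the nonnegative integer solutions of $t-1+\sum_{i=1}^t s_i=2$ with $t\ge 2$; only two essentially distinct possibilities survive. If $t=2$ and $\{s_1,s_2\}=\{0,1\}$, the component with $s_i=0$ is the unique connected minimal $(1,0)$-polar obstruction $K_2$, while the component with $s_j=1$ is a connected $P_4$-sparse minimal split-graph obstruction; among $\{2K_2,C_4,C_5\}$, only $C_4$ is both connected and $P_4$-sparse (by the observation used in case~1 that $C_5$ fails the sparsity condition), so $\overline G=K_2+C_4$ and $G=2K_1\oplus 2K_2$. If $t=3$ and $s_1=s_2=s_3=0$, each $G_i$ must be $K_2$, producing $\overline G=3K_2$ and $G=K_{2,2,2}$, the octahedron. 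In both situations the side condition that $\overline{G_i}$ is not among the graphs in \Cref{fig:essentials} is immediate, since the candidate complements $2K_1$ and $2K_2$ have too few vertices to appear there.

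Summing the contributions yields $4+3+2=9$ obstructions, which match $E_1,\dots,E_9$ after a suitable labeling. The only delicate step is in case~3: excluding $C_5$ from the valid components requires the $P_4$-sparse condition explicitly, whereas every other verification is a direct consequence of \Cref{theo:charSp(s1)obsmin} together with \Cref{thm:allDisc(s1)obsmin}.
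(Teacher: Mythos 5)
Your proposal is correct and follows essentially the route the paper intends: the corollary is stated as a direct consequence of \Cref{theo:charSp(s1)obsmin}, and your enumeration at $s=2$ (four cographs from \Cref{fig:essentials}, the three parameterized graphs $E_6,E_8,E_9$, and the two case-3 graphs $\overline{3K_2}=E_4$ and $\overline{K_2+C_4}=E_5$ obtained from components $K_2$ and $C_4$, with $C_5$ ruled out by $P_4$-sparseness) is exactly the intended verification.
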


\begin{figure}[ht!]
\begin{center}
\begin{tikzpicture}

\begin{scope}[yscale=0.75]
\foreach \i in {0,1}
	\foreach \j in {0,1,2}
		\node [vertex] (\i\j) at (\i-0.5,\j-1)[]{};

\foreach \i/\j in {00/11,00/12,01/10,01/12,02/10,02/11,00/01,01/02,10/11,11/12}
  \draw [edge] (\i) to (\j);

\foreach \i/\j in {00/02,12/10}
  \draw [bentE] (\i) to (\j);

\node (n) at (0,-1.75){$E_4 = \overline{3K_2}$};
\end{scope}

\begin{scope}[xshift=3.5cm, yscale=0.75]
\foreach \i in {0,1}
	\foreach \j in {0,1,2}
		\node [vertex] (\i\j) at (\i-0.5,\j-1)[]{};

\foreach \i/\j in {00/01,01/02,10/11,11/12,00/10,00/11,01/10,01/12,02/11,02/12}
  \draw [edge] (\i) to (\j);

\node (n) at (0,-1.8){$E_5 = \overline{K_2+C_4}$};
\end{scope}

\begin{scope}[xshift=7cm, scale=0.75]
\foreach \i in {0,...,3}
	\node [vertex] (\i) at ({(\i*90)}:0.9){};
\node [vertex] (4) at (0,0){};
\node [vertex] (5) at (1,1){};

\foreach \i in {0,...,3}
{
	\draw let \n1={int(mod(\i+1,4)} in [edge] (\i) to (\n1);
	\draw [edge] (\i) to (4);
}

\node (n) at (0,-1.8){$E_6 = K_1 + W_4$};
\end{scope}

\begin{scope}[xshift=0cm, yshift=-3.32cm, scale=0.75, yscale=1]
\foreach \i in {0,...,3}
	\node [vertex] (\i) at ($ (0,-0.5) + ({(\i*90)}:0.9) $){};
\node [vertex] (4) at (-0.6,1.2){};
\node [vertex] (5) at (0.6,1.2){};

\foreach \i in {0,...,3}
	\draw let \n1={int(mod(\i+1,4)} in [edge] (\i) to (\n1);

\foreach \i/\j in {1/3,4/5}
	\draw [edge] (\i) to (\j);

\node (n) at (0,-2.4){$E_8 = K_2+(K_2\oplus \overline{2K_1})$};
\end{scope}

\begin{scope}[xshift=3.5cm, yshift=-3.5cm, scale=0.75]
\foreach \i in {0,...,2}
	\node [vertex] (\i) at ($ (-0.3,0.6) + ({(\i*120)}:0.9) $){};
\foreach \i in {3,...,5}
	\node [vertex] (\i) at ($ (0.3,-0.6) + ({180+(\i*120)}:0.9) $){};

\foreach \i in {0,...,2}
	\draw let \n1={int(mod(\i+1,3)} in [edge] (\i) to (\n1);
\foreach \i in {3,...,5}
	\draw let \n1={int(3+mod(\i+1,3)} in [edge] (\i) to (\n1);

\node (n) at (0,-2.2){$E_9 = 2K_3$};
\end{scope}

\begin{scope}[xshift=7cm, yshift=-3.5cm, scale=0.75]
\foreach \i in {0,...,4}
	\node [vertex] (\i) at ({90+(\i*72)}:1.3){};
\node [vertex] (5) at (-0.4,-0.3){};
\node [vertex] (6) at (0.4,-0.3){};

\foreach \i in {0,...,4}
	\draw let \n1={int(mod(\i+1,5)} in [edge] (\i) to (\n1);
\foreach \i in {0,...,4}
	\foreach \j in {5,6}
		\draw [edge] (\i) to (\j);

\node (n) at (0,-2.2){$E_{13} = \overline{K_2+C_5}$};
\end{scope}

\end{tikzpicture}
\end{center}
\caption{Some minimal $(2,1)$-polar obstructions.}
\label{fig:P4ext(21)obsmin}
\end{figure}
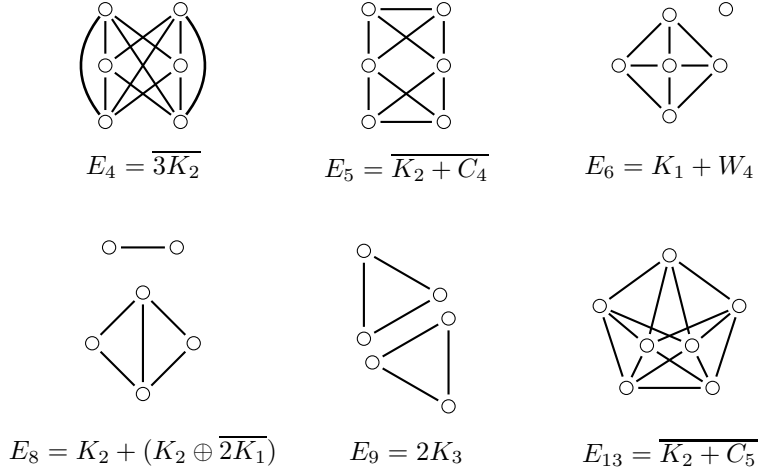

For any hereditary property $\mathcal{P}$ and any graph classes $\mathcal{G}$
and $\mathcal{H}$ such that $\mathcal{G} \subseteq \mathcal{H}$, the set of
minimal $\mathcal{P}$-obstructions in $\mathcal{G}$ clearly is a (possibly
proper) subset of the set of minimal $\mathcal{P}$-obstructions in
$\mathcal{H}$. The class of $P_4$-sparse graphs has been observed to have a
behavior which is very similar to cographs when computing their minimal
obstructions with respect to some hereditary properties. For example,
Hannnebauer \cite{hannnebauerTH'10} proved that every $P_4$-sparse minimal
obstruction for $(k,\ell)$-coloring is a cograph. The following results show
that the same phenomenon occurs when we deal with $(s,1)$-, $(\infty, 1)$-,
$(\infty, \infty)$-polarity.

\begin{theorem}
\label{theo:(s1)sparseCog}
Let $s$ be a nonnegative integer. Any $P_4$-sparse minimal $(s,1)$-polar
obstruction is a cograph.
\end{theorem}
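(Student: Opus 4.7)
The plan is to proceed by strong induction on $s$. For the base cases $s=0$ and $s=1$, I would argue directly. When $s=0$, the $(0,1)$-polar graphs are precisely the complete graphs, so the unique minimal $(0,1)$-polar obstruction is $2K_1$, which is a cograph. When $s=1$, the $(1,1)$-polar graphs are the split graphs, whose minimal obstructions are known to be $2K_2$, $C_4$, and $C_5$; of these, $C_5$ fails to be $P_4$-sparse (its only $5$-element vertex subset induces five distinct $P_4$'s), and the remaining graphs $2K_2$ and $C_4$ are both cographs.

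For the inductive step $s\ge 2$, let $G$ be a $P_4$-sparse minimal $(s,1)$-polar obstruction and invoke \Cref{theo:charSp(s1)obsmin}. In cases 1 and 2 of that theorem, $G$ is explicitly isomorphic to one of the four $P_4$-sparse graphs from \Cref{fig:essentials} (namely $E_1$, $E_2$, $E_3$, or $E_7$), or to one of $2K_{s+1}$, $K_2+(K_s\oplus 2K_1)$, or $K_1+(K_{s-1}\oplus C_4)$. Each of these is obtained from trivial graphs by disjoint union and join operations only, and hence is a cograph by \Cref{theo:cographs}. In case 3, the complement of $G$ decomposes as $\overline{G}=G_1+\cdots+G_t$ with $t\ge 2$, where each $G_i$ is a connected $P_4$-sparse minimal $(1,s_i)$-polar obstruction and $s=t-1+\sum_{j=1}^{t}s_j$. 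Since $t\ge 2$ and the integers $s_j$ are nonnegative, $s_i\le s-1<s$ for every $i$. Hence $\overline{G_i}$ is a $P_4$-sparse minimal $(s_i,1)$-polar obstruction with $s_i<s$, so by the inductive hypothesis $\overline{G_i}$ is a cograph; since cographs are closed under complementation, so is $G_i$. The class of cographs is also closed under disjoint union, so $\overline{G}$ is a cograph and therefore $G$ is a cograph.

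The essentially only nontrivial step is verifying that the recursion in case 3 strictly decreases the parameter $s$, which follows from the constraint $t\ge 2$ in the decomposition guaranteed by \Cref{lem:disc(1k)obsmin}; everything else reduces to direct inspection of the explicit lists provided by \Cref{theo:charSp(s1)obsmin} together with the standard closure properties of the cograph class recalled in \Cref{theo:cographs}.
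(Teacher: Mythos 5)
Your proof is correct and follows essentially the same route as the paper's: induction on $s$, with the disconnected case settled by the explicit lists and the complement-disconnected case settled by decomposing $\overline{G}$ into minimal $(1,s_i)$-polar obstructions with $s_i<s$ and complementing. The only cosmetic differences are that you cite the packaged characterization \Cref{theo:charSp(s1)obsmin} where the paper invokes \Cref{cor:spiderNOobsmin,thm:allDisc(s1)obsmin,lem:disc(1k)obsmin} directly, and that you spell out the base cases $s\le 1$ which the paper dismisses as clear.
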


\begin{proof}
  Let $G$ be a $P_4$-sparse minimal $(s,1)$-polar obstruction.   We proceed by
  induction on $s$. The statement is clearly true for $s \le 1$. Let $s \ge 2$.
  It follows from \Cref{cor:spiderNOobsmin} that $G$ is not a spider, hence $G$
  or its complement is disconnected.

  If $G$ is disconnected, it follows from \Cref{thm:allDisc(s1)obsmin} that $G$
  is a cograph. Otherwise, if $\overline G$ is disconnected,
  \Cref{lem:disc(1k)obsmin} implies that any component $H$ of $\overline G$ is a
  $P_4$-sparse minimal $(1,k_i)$-polar obstruction for a nonnegative integer
  $k_i$ with $k_i < k$. Thus, $\overline H$ is a $P_4$-sparse minimal
  $(k_i,1)$-polar obstruction, and by induction hypothesis $\overline H$ (hence
  $H$) is a cograph. Since the disjoint union of cographs is also a cograph,
  $\overline G$ (hence $G$) is a cograph.
\end{proof}

\begin{corollary}
\label{cor:sp(infty1)}
If $G$ is a $P_4$-sparse graph, then $G$ is a minimal $(\infty,1)$-polar
obstruction if and only if $G$ is one of the four cographs depicted in
\Cref{fig:essentials}.
\end{corollary}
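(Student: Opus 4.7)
The plan is to obtain the forward implication directly from \Cref{lem:discMonopObsmin} and to establish the converse through a case analysis driven by \Cref{theo:charSp(s1)obsmin}.

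For the forward direction, \Cref{lem:discMonopObsmin} asserts that the seven graphs of \Cref{fig:essentials} are minimal $(\infty,1)$-polar obstructions. Of these, exactly four, namely $E_1$, $E_2$, $E_3$, and $E_7$, are free of $C_5, P_5, \overline{P_5}, P, \overline{P}, F$ and $\overline{F}$ as induced subgraphs; by the forbidden induced subgraph characterization of $P_4$-sparse graphs recalled in \Cref{sec:P4sparse}, these four are precisely the $P_4$-sparse members of \Cref{fig:essentials}, and an easy inspection confirms that each of them is actually a cograph.

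For the converse, let $G$ be a $P_4$-sparse minimal $(\infty,1)$-polar obstruction. Every proper induced subgraph $H$ of $G$ is $(\infty,1)$-polar, hence $(s_H,1)$-polar for some nonnegative integer $s_H$. Set $s^{\ast}$ to be the maximum of these $s_H$ over the finitely many proper induced subgraphs of $G$. Then every proper induced subgraph of $G$ is $(s^{\ast},1)$-polar while $G$ itself is not, so $G$ is a minimal $(s^{\ast},1)$-polar obstruction. A direct check rules out $s^{\ast}\le 1$: the only $P_4$-sparse candidates for minimal $(0,1)$- or $(1,1)$-polar obstructions are $2K_1$, $2K_2$, and $C_4$, and each of them is itself $(\infty,1)$-polar, so none can be an $(\infty,1)$-polar obstruction. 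Hence $s^{\ast}\ge 2$ and I may invoke \Cref{theo:charSp(s1)obsmin} with $s=s^{\ast}$.

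If $G$ satisfies condition~(1) of that theorem, $G$ is one of the four cographs of \Cref{fig:essentials} and we are done. Condition~(2) cannot occur: each of $2K_{s^{\ast}+1}$, $K_2+(K_{s^{\ast}}\oplus 2K_1)$, and $K_1+(K_{s^{\ast}-1}\oplus C_4)$ admits an obvious $(\infty,1)$-polar partition in which $B$ is a clique (a component, an edge, or the isolated vertex) and $A$ is the natural complete multipartite remainder. Condition~(3) also fails: by \Cref{lem:disc(1k)obsmin}, each component $G_i$ of $\overline G$ is $(1,s_i+1)$-polar, and the disjoint union of these partitions is a $(1,\infty)$-polar partition of $\overline G$, whose complement is an $(\infty,1)$-polar partition of $G$. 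The only mildly delicate step is this last verification, which amounts to the elementary observation that the disjoint union of monopolar graphs is monopolar.
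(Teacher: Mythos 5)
Your proof is correct, but it takes a genuinely different route in the harder direction. The paper also begins by converting a minimal $(\infty,1)$-polar obstruction into a minimal $(s,1)$-polar obstruction for some finite $s$, but then it invokes \Cref{theo:(s1)sparseCog} to conclude that $G$ is a cograph and finishes by citing the known list of cograph minimal $(\infty,1)$-polar obstructions (Theorem~12 of \cite{contrerasDAM281}); you instead stay inside the present paper, pushing $s^{\ast}$ up to at least $2$ (via the split-graph characterization recalled in the text) so that \Cref{theo:charSp(s1)obsmin} applies, and then eliminating its second and third alternatives by exhibiting explicit $(\infty,1)$-polar partitions, with \Cref{lem:disc(1k)obsmin} handling the case of disconnected complement. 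Your case~(3) argument could even be shortened: the parenthetical remark in \Cref{lem:disc(1k)obsmin} already records that a disconnected $P_4$-sparse minimal $(1,s)$-polar obstruction is $(1,s+1)$-polar, which gives the contradiction at once, and the paper's route avoids your $s^{\ast}\le 1$ discussion altogether since \Cref{theo:(s1)sparseCog} is stated for all nonnegative $s$. What your argument buys is self-containment (no appeal to the external cograph characterization) at the cost of a longer case analysis; what the paper's buys is brevity and reuse of the structural theorem that all $P_4$-sparse minimal $(s,1)$-polar obstructions are cographs. The easy direction is handled the same way in both proofs, via \Cref{lem:discMonopObsmin}, and your identification of $E_1,E_2,E_3,E_7$ as the $P_4$-sparse members of \Cref{fig:essentials} is accurate.
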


\begin{proof}
	Let $G$ be $P_4$-sparse minimal $(\infty,1)$-polar obstruction. Then $G$ is a
	minimal $(s,1)$-polar obstruction for some nonnegative integer $s$. Moreover,
	by \Cref{theo:(s1)sparseCog} we have that $G$ is a cograph minimal
	$(\infty,1)$-polar obstruction. Then, from Theorem 12 in
	\cite{contrerasDAM281} we have that $G$ is isomorphic to one of the cographs
	depicted in \Cref{fig:essentials}. The converse is proved in
	\Cref{lem:discMonopObsmin}.
\end{proof}

\begin{theorem}
If $G$ is a $P_4$-sparse minimal polar obstruction, then $G$ is a cograph.
\end{theorem}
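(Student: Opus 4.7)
The proof plan is to split into cases depending on the connectedness of $G$ and $\overline{G}$, guided by the structural characterization in \Cref{thm:connCharSparse}.   I would first observe that polarity is self-complementary: $(A, B)$ is a polar partition of $G$ if and only if $(B, A)$ is a polar partition of $\overline{G}$.   Combined with the fact that $P_4$-sparse graphs are closed under complementation, this shows that $G$ is a $P_4$-sparse minimal polar obstruction if and only if $\overline{G}$ is, and the proof therefore reduces to two cases: $G$ disconnected, and $G$ connected with connected complement.

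For the disconnected case, I would apply \Cref{lem:charObsmin} to obtain $G \cong P_3 + H$, where $H$ is a minimal monopolar obstruction that is not a minimal polar obstruction.   Since $H$ is a $P_4$-sparse induced subgraph of $G$, its complement $\overline{H}$ is a $P_4$-sparse minimal $(\infty, 1)$-polar obstruction (using the same complementation argument applied to monopolarity).   By \Cref{cor:sp(infty1)}, $\overline{H}$ is then one of the four cographs depicted in \Cref{fig:essentials}.   Taking complements once more, $H$ is a cograph, and therefore so is $G \cong P_3 + H$.

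The principal obstacle is the remaining case, in which both $G$ and $\overline{G}$ are connected.   Here \Cref{thm:connCharSparse} forces $G$ to be a spider $(S, K, R)$ with $|S| = |K| \ge 2$, and the key sublemma to establish is that such a spider is polar if and only if $G[R]$ is polar.   The forward direction follows from the heredity of polarity; for the converse, given any polar partition $(A_R, B_R)$ of $G[R]$, I would verify that $(K \cup A_R,\, S \cup B_R)$ is a polar partition of $G$ uniformly for thin and thick spiders.   Since $K$ is completely adjacent to $A_R \subseteq R$, the induced graph $G[K \cup A_R] \cong K_{|K|} \oplus G[A_R]$ remains complete multipartite (joining with a clique merely introduces $|K|$ singleton parts); and since $S$ is independent and completely nonadjacent to $B_R \subseteq R$, the induced graph $G[S \cup B_R] \cong |S|\,K_1 + G[B_R]$ remains a cluster.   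With the sublemma in hand, the minimality of $G$ forces the proper induced subgraph $G[R]$ to be polar, and the sublemma then yields that $G$ itself is polar, contradicting that $G$ is a polar obstruction.   This closes the final case and completes the argument.
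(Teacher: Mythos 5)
Your proposal is correct and follows essentially the same route as the paper: the spider case is ruled out by lifting a polar partition $(A,B)$ of the head to $(A\cup K, B\cup S)$, and the remaining cases (where $G$ or $\overline{G}$ is disconnected) are handled via \Cref{lem:charObsmin} together with \Cref{cor:sp(infty1)}, using closure of $P_4$-sparse graphs and cographs under complementation. The only cosmetic difference is that you make the complementation reduction and the verification of the lifted partition explicit, which the paper leaves implicit.
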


\begin{proof}
  First, assume for a contradiction that $G$ is a spider, say $G = (S, K, R)$.
  Since headless spiders are split graphs, and thus polar graphs, $R$ is not an
  empty set. Moreover, by the minimality of $G$, $G[R]$ admits a polar partition
  $(A, B)$, and then $(A \cup K, B\cup S)$ would be a polar partition of $G$,
  contradicting the choice of $G$. Therefore $G$ is not a spider. Thus, by
  \Cref{thm:connCharSparse}, $G$ or its complement is disconnected. However, in
  both cases \Cref{lem:charObsmin} and \Cref{cor:sp(infty1)} imply that $G$ is a
  cograph.
 \end{proof}


\subsection{$P_4$-extendible graphs}

The following result is analogous to \Cref{theo:charSp(s1)obsmin}; it provides a
complete recursive construction of $P_4$-extendible minimal $(s,1)$-polar
obstructions. Notice that, since $C_5$ is a $P_4$-extendible minimal
$(1,1)$-polar obstruction, there are $P_4$-extendible minimal $(s,1)$-polar
obstructions which are not cographs for each positive integer $s$.

\begin{theorem} \label{theo:charExt(s1)obsmin}
Let $s$ be an integer, $s \ge 2$.   If $G$ is a $P_4$-extendible graph, then $G$
is a minimal $(s,1)$-polar obstruction if and only if $G$ satisfies exactly one
of the following assertions:
\begin{enumerate}
	\item $G$ is isomorphic to one of the seven graphs depicted in
    \Cref{fig:essentials}.

	\item $G$ is isomorphic to some of $2K_{s+1}, K_2 + (K_s \oplus 2K_1)$ or $K_1
    + (K_{s-1}\oplus C_4)$.

	\item The complement of $G$ is disconnected with components $G_1, \dots, G_t$,
    each $G_i$ is a minimal $(1, s_i)$-polar obstruction whose complement is
    different from the graphs in \Cref{fig:essentials}, and $s = t-1+
    \sum_{i=1}^t s_i$.
\end{enumerate}
\end{theorem}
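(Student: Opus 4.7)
The statement is the $P_4$-extendible analogue of \Cref{theo:charSp(s1)obsmin}, and my plan is to run the same two-case argument, substituting the supporting lemmas of \Cref{sec:connectedM(s1)PO} for their $P_4$-sparse counterparts. The split is by connectivity of $G$. In the disconnected case, \Cref{thm:allDisc(s1)obsmin} already produces the exhaustive list of disconnected minimal $(s,1)$-polar obstructions (for arbitrary graphs), so I just need to observe that every graph in that list is $P_4$-extendible: the three parametric families $2K_{s+1}$, $K_2+(2K_1\oplus K_s)$, $K_1+(C_4\oplus K_{s-1})$ are cographs, while each of the seven graphs in \Cref{fig:essentials} is the disjoint union of a small $P_4$-free graph with either $P_4$, $C_5$, $P$, or $\overline{P_5}$, which are extension graphs.

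In the connected case, the key step is to show that $\overline G$ must be disconnected. Since $s\ge 2$, \Cref{rem:extNoObs}\eqref{part:C5P5P} says $C_5$ is $(2,1)$-polar and therefore cannot be a minimal $(s,1)$-polar obstruction. Taking complements, $\overline G$ is a $P_4$-extendible minimal $(1,s)$-polar obstruction with $\overline G \not\cong C_5$, so \Cref{cor:SspiderNOobsmin} forces $\overline G$ to be disconnected (since $G$ itself is connected by assumption). Writing the components as $G_1,\dots,G_t$, \Cref{lem:disc(1k)obsminExt} provides integers $s_1,\dots,s_t$ with $s=t-1+\sum s_i$ such that each $G_i$ is a connected $P_4$-extendible minimal $(1,s_i)$-polar obstruction that is moreover $(1,s_i+1)$-polar. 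The last property rules out $\overline{G_i}$ being one of the seven graphs in \Cref{fig:essentials}, because those are minimal $(\infty,1)$-polar obstructions and hence their complements are not $(1,k)$-polar for any $k$; this is exactly the extra clause demanded in alternative~3.

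For the converse direction I would just verify each alternative separately. Alternatives~1 and~2 are covered directly by \Cref{thm:allDisc(s1)obsmin}, while alternative~3 is the ``only if'' direction of \Cref{lem:extDisc(1k)obsmin} (together with \Cref{lem:basicConnected} packaged inside it) applied to the disjoint union $G_1+\dots+G_t$, giving a disconnected $(1,s)$-polar minimal obstruction whose complement is precisely $G$.

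The main obstacle I anticipate is the bookkeeping around the ``exception'' provided by $C_5$: unlike in the $P_4$-sparse case, $P_4$-extendible graphs admit a connected self-complementary minimal $(1,1)$-polar obstruction, and one must keep it out of the recursion for $s\ge 2$ so as not to produce duplicate or ill-defined entries. The hypothesis $s\ge 2$ and the explicit exclusion of graphs in \Cref{fig:essentials} from the components in clause~3 are what make this clean; both need to be invoked at the right moment. Everything else is a routine transcription of the argument for \Cref{theo:charSp(s1)obsmin}, with \Cref{cor:SspiderNOobsmin} and \Cref{lem:disc(1k)obsminExt} playing the roles of \Cref{cor:spiderNOobsmin} and \Cref{lem:disc(1k)obsmin}.
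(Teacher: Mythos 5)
Your proposal is correct and takes essentially the same route as the paper: split on connectivity of $G$, dispose of the disconnected case with \Cref{thm:allDisc(s1)obsmin}, and in the connected case use \Cref{rem:extNoObs} together with \Cref{cor:SspiderNOobsmin} to force $\overline{G}$ to be disconnected, then apply \Cref{lem:disc(1k)obsminExt} (with \Cref{lem:extDisc(1k)obsmin} for the converse), exactly as the published argument does. The only slip is the side remark that every graph of \Cref{fig:essentials} is the disjoint union of a $P_4$-free graph with one of $P_4$, $C_5$, $P$, $\overline{P_5}$ --- four of them ($E_1$, $E_2$, $E_3$, $E_7$) are simply cographs --- but since all seven are $P_4$-extendible in any case, this does not affect the argument.
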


\begin{proof}
  If $G$ is disconnected, it follows from \Cref{thm:allDisc(s1)obsmin} that $G$
  is a minimal $(s,1)$-polar obstruction if and only if $G$ is either a graph
  depicted in \Cref{fig:essentials}, or it is isomorphic to some of $2K_{s+1},
  K_2 + (K_s\oplus 2K_1)$ or $K_1 + (K_{s-1}\oplus C_4)$. Otherwise, if $G$ is
  connected, \Cref{cor:SspiderNOobsmin} and \Cref{part:ext} of
  \Cref{rem:extNoObs} imply that $\overline{G}$ is a disconnected
  $P_4$-extendible minimal $(1, s)$-polar obstruction, and the result follows
  from \Cref{lem:disc(1k)obsminExt}.
\end{proof}

\begin{corollary}
\label{cor:extE}
There are exactly 13 $P_4$-extendible minimal $(2,1)$-polar obstructions; they
are the graphs $E_1, \dots, E_{13}$ depicted in
\Cref{fig:essentials,fig:P4ext(21)obsmin}.
\end{corollary}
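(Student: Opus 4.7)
The plan is to apply \Cref{theo:charExt(s1)obsmin} with $s = 2$ and enumerate exhaustively the graphs produced by each of the three cases, then identify them with the $E_i$ of \Cref{fig:essentials,fig:P4ext(21)obsmin}.

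Case~1 directly supplies the seven graphs of \Cref{fig:essentials}, which are $E_1, E_2, E_3, E_7, E_{10}, E_{11}, E_{12}$. Case~2 with $s = 2$ specializes the three listed sporadic graphs to $2K_3 = E_9$, $K_2 + (K_2 \oplus 2K_1) = E_8$, and $K_1 + (K_1 \oplus C_4) = K_1 + W_4 = E_6$. These account for $10$ of the $13$ graphs.

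The main work is Case~3, where $G$ is connected and $\overline{G}$ has components $G_1, \dots, G_t$ with each $G_i$ a (necessarily connected) minimal $(1, s_i)$-polar obstruction whose complement does not appear in \Cref{fig:essentials}, subject to $t - 1 + \sum_{i=1}^{t} s_i = 2$. Since all $s_i \ge 0$, this forces $t \in \{2,3\}$. For $t = 3$, every $s_i = 0$; the only connected minimal $(1,0)$-polar obstruction is $K_2$, so $\overline{G} = 3K_2$ and $G = \overline{3K_2} = E_4$. For $t = 2$, we have $s_1 + s_2 = 1$, say $s_1 = 0$ and $s_2 = 1$; the connected minimal $(1,1)$-polar obstructions are the connected minimal split obstructions, namely $C_4$ and $C_5$, so $\overline{G} \in \{K_2 + C_4,\ K_2 + C_5\}$, giving $G = \overline{K_2 + C_4} = E_5$ and $G = \overline{K_2 + C_5} = E_{13}$. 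In each instance I would also verify that the complement condition is satisfied (for example, $\overline{C_4} = 2K_2$, $\overline{C_5} = C_5$, and $\overline{K_2} = 2K_1$ are not among the graphs in \Cref{fig:essentials}), and that $t \ge 4$ is impossible.

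To finish, I need to confirm that each of the three graphs arising from Case~3 and the three from Case~2 lies in the class of $P_4$-extendible graphs; this is a routine check from \Cref{thm:connChar} (the $P_4$'s in each of these graphs have very restricted structure, easily inspected by hand). The only mildly subtle step is making sure the enumeration for $t = 2$ in Case~3 is complete: one must identify all connected minimal $(1,1)$-polar obstructions, which follows from the classical fact that split graphs are exactly the $\{2K_2, C_4, C_5\}$-free graphs. Once this is done, the union over the three cases yields exactly $E_1, \dots, E_{13}$, proving the corollary.
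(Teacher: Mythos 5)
Your proposal is correct and is essentially the paper's own argument: the corollary is meant to follow by specializing \Cref{theo:charExt(s1)obsmin} to $s=2$ and enumerating the three cases exactly as you do (seven graphs from \Cref{fig:essentials}, the three sporadic graphs $E_6, E_8, E_9$, and the complement-disconnected graphs $E_4, E_5, E_{13}$ obtained from $K_2$, $C_4$, $C_5$ as components of $\overline{G}$). Your additional checks (connected minimal $(1,0)$- and $(1,1)$-polar obstructions, the complement condition, and $P_4$-extendibility of the resulting graphs) are precisely the routine verifications the paper leaves implicit.
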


Unlike $P_4$-sparse graphs, there are $P_4$-extendible minimal monopolar and
polar obstructions which are not cographs. We give complete lists of such
minimal obstructions in the next results.

\begin{corollary}
\label{cor:P4exCoMonop}
If $G$ is a $P_4$-extendible graph, then $G$ is a minimal $(\infty,1)$-polar
obstruction if and only if $G$ is one of the graphs depicted in
\Cref{fig:essentials}.
\end{corollary}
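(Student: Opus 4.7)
The backward implication follows immediately from Lemma \ref{lem:discMonopObsmin}, so the plan is to establish the forward direction. Let $G$ be a $P_4$-extendible minimal $(\infty,1)$-polar obstruction. The first step I would take is to reduce to a bounded-$s$ setting: since $G$ is finite, each of its proper induced subgraphs is $(\infty,1)$-polar, and being $(s,1)$-polar is monotone in $s$, there exists an integer $s \ge 2$ for which every proper induced subgraph of $G$ is simultaneously $(s,1)$-polar; because $G$ itself is not $(\infty,1)$-polar, $G$ is not $(s,1)$-polar either, hence $G$ is a $P_4$-extendible minimal $(s,1)$-polar obstruction and Theorem \ref{theo:charExt(s1)obsmin} applies.

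By that theorem, $G$ falls into one of three cases, the first of which yields exactly the seven graphs in Figure \ref{fig:essentials}, the desired conclusion. I will rule out the remaining two by exhibiting an $(\infty,1)$-polar partition of $G$ in each, contradicting that $G$ is an obstruction. For the second case, I would check each candidate directly: for $2K_{s+1}$, take $A$ to be one copy of $K_{s+1}$ viewed as a complete $(s+1)$-partite graph on singletons and $B$ the other copy viewed as a clique; for $K_2 + (K_s \oplus 2K_1)$, take $A = V(K_s \oplus 2K_1)$ (a complete $(s+1)$-partite graph whose parts are the pair from $2K_1$ together with $s$ singletons) and $B = V(K_2)$; for $K_1 + (K_{s-1} \oplus C_4)$, take $A = V(K_{s-1} \oplus C_4)$ (complete $(s+1)$-partite, with $s-1$ singletons from $K_{s-1}$ and two parts of size $2$ from $C_4$) and $B = V(K_1)$.

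The main obstacle will be the third case, where $\overline G$ is disconnected with components $G_1, \dots, G_t$ and each $G_i$ is a minimal $(1, s_i)$-polar obstruction. Applying Lemma \ref{lem:disc(1k)obsminExt} to $\overline G$, each $G_i$ is $(1, s_i + 1)$-polar, so I can fix a partition $(A_i, B_i)$ of $V(G_i)$ with $G_i[A_i]$ independent and $G_i[B_i]$ an $(s_i+1)$-cluster; dually, in $\overline{G_i}$ the set $A_i$ induces a clique and $B_i$ induces a complete $(s_i+1)$-partite graph. Since $G = \overline{G_1} \oplus \cdots \oplus \overline{G_t}$, setting $A = \bigcup_i B_i$ and $B = \bigcup_i A_i$ yields an $(\infty,1)$-polar partition of $G$: $G[B]$ is the join of cliques, hence a clique, and $G[A]$ is the join of complete multipartite graphs, hence itself complete multipartite with $\sum_i (s_i+1)$ parts. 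This contradicts that $G$ is an obstruction, completing the plan.
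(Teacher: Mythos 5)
Your proposal is correct and follows essentially the same route as the paper: reduce to a minimal $(s,1)$-polar obstruction for some finite $s\ge 2$, apply \Cref{theo:charExt(s1)obsmin} together with \Cref{lem:disc(1k)obsminExt}, and obtain the converse from \Cref{lem:discMonopObsmin}. The only difference is that you explicitly exhibit $(\infty,1)$-polar partitions ruling out the second and third cases, details the paper leaves implicit.
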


\begin{proof}
	Let $G$ be a $P_4$-extendible minimal $(\infty,1)$-polar obstruction. Then $G$
	is a minimal $(s,1)$-polar obstruction for some integer $s$, $s \ge 2$. By
	\cref{lem:disc(1k)obsminExt,theo:charExt(s1)obsmin} we conclude that $G$ is
	isomorphic to one of the seven graphs depicted in \Cref{fig:essentials}. The
	converse is proved in \Cref{lem:discMonopObsmin}.
\end{proof}

\begin{theorem}
If $H$ is a $P_4$-extendible minimal polar obstruction, then $H$ or its
complement is the disjoint union of $P_3$ with the complement of one of the
graphs depicted in \Cref{fig:essentials}.
\end{theorem}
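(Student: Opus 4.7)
The plan is to use the connectedness characterization of $P_4$-extendible graphs (\Cref{thm:connChar}) to reduce to the case where $H$ or $\overline H$ is disconnected, at which point \Cref{lem:charObsmin} and \Cref{cor:P4exCoMonop} do the job. Since $P_4$-extendibility is closed under complementation, it suffices to rule out the two ``connected'' alternatives of \Cref{thm:connChar}, namely that $H$ is an extension graph or an $S$-spider with nonempty head for some separable extension graph $S$.

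The first alternative is immediate from \Cref{rem:extNoObs}: $P_4, F, \overline{F}$ are split graphs, and $C_5, P_5, P$ (together with their complements) are $(2,1)$-polar, so every extension graph is polar and thus not an obstruction. For the $S$-spider case, the key observation (by direct inspection of \Cref{fig:extSets}) is that for every separable extension graph the midpoints set $M$ induces a complete multipartite graph and the endpoints set $E$ induces a cluster: for $P_4$ one has $M\cong K_2$ and $E\cong 2K_1$; for $P$, $M\cong P_3 = K_{1,2}$ and $E\cong 2K_1$; for $\overline P$, $M\cong K_2$ and $E\cong K_2+K_1$; for $F$, $M\cong K_2$ and $E\cong 3K_1$; and for $\overline F$, $M\cong K_3$ and $E\cong 2K_1$. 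Writing $H=(S,K,R)$ with $R\neq\varnothing$, the minimality of $H$ ensures that $H[R]$ admits a polar partition $(A_R,B_R)$. Since $R$ is completely adjacent to $M$ and completely nonadjacent to $E$, the pair $(M\cup A_R,\, E\cup B_R)$ would be a polar partition of $H$ itself (its first part is the join of two complete multipartite graphs, hence complete multipartite; its second part is the disjoint union of two clusters, hence a cluster), contradicting that $H$ is a polar obstruction.

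Thus, WLOG, $H$ is disconnected. By \Cref{lem:charObsmin}, $H\cong P_3+H'$ for some minimal monopolar obstruction $H'$ which is itself polar. Being an induced subgraph of the $P_4$-extendible graph $H$, $H'$ is $P_4$-extendible, so $\overline{H'}$ is a $P_4$-extendible minimal $(\infty,1)$-polar obstruction; by \Cref{cor:P4exCoMonop}, $\overline{H'}$ is one of the seven graphs depicted in \Cref{fig:essentials}. Hence $H$ is the disjoint union of $P_3$ with the complement of one of those graphs, as required.

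The only delicate point is the $S$-spider case, and the cleanest way to carry it out is to package the midpoint/endpoint structure of each separable extension graph into a uniform statement (``$(M,E)$ is a polar partition of $S$'') so that a single polar-partition-pasting argument covers all five cases simultaneously; the rest is a mechanical assembly of earlier results.
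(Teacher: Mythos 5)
Your proposal is correct and takes essentially the same route as the paper: it dispatches the connected case by pasting a polar partition of the head onto the midpoints (complete multipartite) and endpoints (cluster) of the separable extension graph, then reduces via \Cref{thm:connChar} to the disconnected case and concludes with \Cref{lem:charObsmin} and \Cref{cor:P4exCoMonop}. The only cosmetic differences are that you verify the midpoint/endpoint structure case by case and make explicit the complement-closure of $P_4$-extendibility and of polarity, which the paper leaves implicit.
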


\begin{proof}
  First, let assume for obtaining a contradiction that $H$ is a $G$-spider for
  some separable extension $G$, say $H = (S, K, R)$. By
  \Cref{part:P4F,part:C5P5P} of \Cref{rem:extNoObs}, we have that $R \neq
  \varnothing$, and by the minimality of $H$, $H[R]$ admits a polar partition
  $(A, B)$. But, no matter what separable extension $G$ is, its midpoints induce
  a complete multipartite graph while its endpoints induce a cluster, so $(A\cup
  K, B\cup S)$ is a polar partition of $H$, contradicting the assumption that
  $H$ was a $G$-spider. Thus, by \Cref{thm:connChar}, either $H$ or its
  complement is disconnected, and the result follows from
  \Cref{lem:charObsmin,cor:P4exCoMonop}.
\end{proof}


\section{Conclusions}
\label{sec:conclusions}

In the present work we generalize some results related to hereditary properties
in cographs, providing similar results for two superclasses of $P_4$-free
graphs, namely $P_4$-sparse and $P_4$-extendible graphs. The following five
theorems summarize the main contributions of this paper. Let $\mathcal{G}$ be
any subclass of either $P_4$-extendible or $P_4$-sparse graphs which is closed
under both graph complements and induced subgraphs.

\begin{theorem}
Let $G$ be a graph in the class $\mathcal{G}$. Then $G$ is a minimal unipolar
obstruction if and only if $G \in \{2P_3, K_{2,3}, C_5\}$.
\end{theorem}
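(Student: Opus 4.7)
The plan is to reduce the statement directly to the two corollaries already proved in \Cref{sec:MinUnipObs}, namely \Cref{cor:spMUO} for $P_4$-sparse graphs and the analogous corollary for $P_4$-extendible graphs. The key observation making this reduction work is that since $\mathcal{G}$ is closed under induced subgraphs, a graph $G \in \mathcal{G}$ is a minimal unipolar obstruction relative to $\mathcal{G}$ if and only if it is a minimal unipolar obstruction in the ambient superclass ($P_4$-sparse or $P_4$-extendible) that happens to lie in $\mathcal{G}$; the minimality condition is identical because all proper induced subgraphs of $G$ also belong to $\mathcal{G}$.

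For the backward implication, I would simply invoke \Cref{prop:someMUO}, which establishes that each of $2P_3$, $K_{2,3}$, and $C_5$ is a minimal unipolar obstruction in the class of all graphs. Any of these three graphs that happens to be in $\mathcal{G}$ is therefore automatically a minimal unipolar obstruction in $\mathcal{G}$ as well.

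For the forward implication, I would split into the two cases allowed by the hypothesis on $\mathcal{G}$. If $\mathcal{G}$ is contained in the class of $P_4$-sparse graphs, then a minimal unipolar obstruction $G \in \mathcal{G}$ is in particular a $P_4$-sparse minimal unipolar obstruction, and \Cref{cor:spMUO} forces $G \cong 2P_3$ or $G \cong K_{2,3}$. If instead $\mathcal{G}$ is contained in the class of $P_4$-extendible graphs, the analogous corollary for $P_4$-extendible graphs (the final corollary of \Cref{sec:MinUnipObs}) forces $G \in \{2P_3, K_{2,3}, C_5\}$. In both cases the conclusion lies within $\{2P_3, K_{2,3}, C_5\}$, as required.

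There is no real obstacle, since the substantive work has already been done in \Cref{sec:MinUnipObs}. The closure of $\mathcal{G}$ under induced subgraphs is precisely what allows the reduction to the ambient superclass; the closure under complements is not needed for this particular theorem, but is a uniform hypothesis shared with the other summary theorems in the conclusions section.
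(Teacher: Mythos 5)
Your proposal is correct and follows essentially the same route as the paper: the summary theorem is exactly the content of \Cref{cor:spMUO} and the final corollary of \Cref{sec:MinUnipObs} for $P_4$-extendible graphs, combined with \Cref{prop:someMUO} for the backward direction, with the hereditary closure of $\mathcal{G}$ justifying that minimality relative to $\mathcal{G}$ coincides with minimality in the ambient class. Your remark that complement-closure is not needed here is also accurate.
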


\begin{theorem}
\label{theo:main}
Let $G$ be a graph in the class $\mathcal{G}$, and let $s$ be an integer, $s \ge
2$. Then $G$ is a minimal $(s,1)$-polar obstruction if and only if $G$ satisfies
exactly one of the following assertions:
\begin{enumerate}
	\item $G$ is isomorphic to one of the graphs depicted in
    \Cref{fig:essentials}.

	\item $G$ is isomorphic to some of $2K_{s+1}, K_2 + (K_s \oplus 2K_1)$ or $K_1
    + (K_{s-1}\oplus C_4)$.

	\item The complement of $G$ is disconnected with components $G_1, \dots, G_t$,
    each $G_i$ is a minimal $(1, s_i)$-polar obstruction whose complement is
	  different from the graphs in \Cref{fig:essentials}, and $s = t-1 +
    \sum_{i=1}^t s_i$.
\end{enumerate}
\end{theorem}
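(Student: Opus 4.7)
The plan is to derive \Cref{theo:main} as a direct corollary of the two specialized results already established, namely \Cref{theo:charSp(s1)obsmin} for $P_4$-sparse graphs and \Cref{theo:charExt(s1)obsmin} for $P_4$-extendible graphs. Since by hypothesis $\mathcal{G}$ is contained in one of these two classes, I would split the argument according to which class is the ambient one and observe that in both cases the statement specializes to what has already been proved. The observation underlying the unification is that the seven graphs of \Cref{fig:essentials} listed in item (1) here coincide with those appearing in item (1) of \Cref{theo:charExt(s1)obsmin}, while in the $P_4$-sparse case only the four cograph ones among them can actually occur; the remaining three contain some of $C_5, P_5, P$ or their complements and so are excluded from the $P_4$-sparse list automatically.

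For the forward direction, I would argue that if $G \in \mathcal{G}$ is a minimal $(s,1)$-polar obstruction, then since $\mathcal{G}$ sits inside either the $P_4$-sparse or the $P_4$-extendible class, $G$ is also a minimal $(s,1)$-polar obstruction in that ambient class, and the appropriate specialized theorem then places $G$ in one of the three enumerated categories.

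For the converse, I would show that every $G \in \mathcal{G}$ satisfying one of the three assertions is in fact a minimal $(s,1)$-polar obstruction. If $G$ falls under assertion (1) or (2), minimality is already asserted by \Cref{lem:discMonopObsmin} together with the explicit verifications given in \Cref{lem:H+K2,lem:H+K1}. If instead $G$ falls under assertion (3), I would invoke \Cref{lem:kappa} or \Cref{lem:extDisc(1k)obsmin}, depending on whether $\mathcal{G}$ lives inside the $P_4$-sparse or the $P_4$-extendible class. The closure of $\mathcal{G}$ under complementation and induced subgraphs is crucial here: it guarantees that each component $G_i$ of $\overline{G}$ is itself an object of $\mathcal{G}$, so that the recursive condition on minimal $(1,s_i)$-polar obstructions can be applied coherently within $\mathcal{G}$ and the inductive structure of \Cref{lem:disc(1k)obsmin,lem:disc(1k)obsminExt} is preserved.

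The main obstacle, which is admittedly modest, is bookkeeping: verifying that the list produced by \Cref{theo:main} is the common form of the lists from \Cref{theo:charSp(s1)obsmin,theo:charExt(s1)obsmin} and that no new obstructions can arise in an intermediate subclass $\mathcal{G}$ beyond those already classified in the two ambient classes. The latter point is immediate from the hereditary nature of the property, since any minimal $(s,1)$-polar obstruction remains minimal when considered in any larger class containing it.
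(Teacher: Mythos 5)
Your proposal is correct and matches the paper's intent exactly: the paper offers no separate argument for \Cref{theo:main}, presenting it as a summary that specializes \Cref{theo:charSp(s1)obsmin,theo:charExt(s1)obsmin} to any hereditary, complement-closed subclass $\mathcal{G}$, which is precisely your derivation. Your supporting observations (minimality is intrinsic to the graph and hence transfers to the ambient class, and the three non-cograph graphs of \Cref{fig:essentials} cannot occur in the $P_4$-sparse case) are the right bookkeeping points and are sound.
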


\begin{theorem}
Let $G$ be a graph in the class $\mathcal{G}$. Then $G$ is a minimal monopolar
obstruction if and only if $\overline G$ is one of the graphs depicted in
\Cref{fig:essentials}.
\end{theorem}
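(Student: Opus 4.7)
The plan is to reduce this statement to the complementary results already established, namely \Cref{cor:sp(infty1)} for $P_4$-sparse graphs and \Cref{cor:P4exCoMonop} for $P_4$-extendible graphs, via the duality between $(1,\infty)$- and $(\infty,1)$-polar partitions.

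First I would record the elementary observation that a partition $(A,B)$ of $V_G$ is a monopolar (that is, a $(1,\infty)$-polar) partition of $G$ if and only if $(B,A)$ is an $(\infty,1)$-polar partition of $\overline{G}$. Consequently, $G$ is monopolar if and only if $\overline{G}$ is $(\infty,1)$-polar, and this equivalence transfers verbatim to the minimality of obstructions: $G$ is a minimal monopolar obstruction if and only if $\overline{G}$ is a minimal $(\infty,1)$-polar obstruction. Here I would use that $\mathcal{G}$ is closed under complements and under induced subgraphs, so that the whole minimality argument takes place inside $\mathcal{G}$ (a proper induced subgraph $H$ of $G$ is monopolar iff $\overline{H}$, a proper induced subgraph of $\overline{G}$, is $(\infty,1)$-polar).

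Then I would invoke the already-established characterizations: since $\mathcal{G}$ is contained in either the class of $P_4$-sparse graphs or the class of $P_4$-extendible graphs, \Cref{cor:sp(infty1),cor:P4exCoMonop} imply that the minimal $(\infty,1)$-polar obstructions inside $\mathcal{G}$ are exactly the graphs depicted in \Cref{fig:essentials} that lie in $\mathcal{G}$. (For the $P_4$-sparse case only the four cograph members of \Cref{fig:essentials} appear, but each of them is itself $P_4$-sparse and therefore lies in any such $\mathcal{G}$; similarly in the $P_4$-extendible case all seven graphs of \Cref{fig:essentials} are $P_4$-extendible.) Combining this with the complementation reduction gives the desired characterization.

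I do not expect a real obstacle here; the only point requiring care is verifying that the complementation bijection between minimal obstructions is valid inside the restricted class $\mathcal{G}$, which is precisely why the hypothesis that $\mathcal{G}$ is closed under complements was imposed. Once that is in place, the statement is an immediate corollary of the previous two results.
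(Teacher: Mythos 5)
Your reduction via the complementation duality between $(1,\infty)$- and $(\infty,1)$-polar partitions, combined with \Cref{cor:sp(infty1),cor:P4exCoMonop}, is exactly how the paper obtains this summary theorem, and the hypothesis that $\mathcal{G}$ is closed under complements and induced subgraphs is used in the same way. The only blemish is your parenthetical claim that the relevant graphs of \Cref{fig:essentials} ``lie in any such $\mathcal{G}$'' (a proper subclass need not contain them), but nothing depends on it: in the backward direction $\overline{G}\in\mathcal{G}$ already follows from $G\in\mathcal{G}$ and complement-closure, and those graphs are minimal $(\infty,1)$-polar obstructions by \Cref{lem:discMonopObsmin}.
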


\begin{theorem}
Let $G$ be a graph in the class $\mathcal{G}$. Then, $G$ is a minimal polar
obstruction if and only if either $G$ or its complement is the join of
$\overline{P_3}$ with one of the graphs depicted in \Cref{fig:essentials}.
\end{theorem}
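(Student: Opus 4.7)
The plan is to deduce the claim from \Cref{lem:charObsmin}, which converts a disconnected minimal polar obstruction into a minimal monopolar obstruction that is additionally a polar graph. Since polarity is self-complementary and $\mathcal{G}$ is closed under complements, it will suffice to handle the case when $G$ itself is disconnected and then apply complementation to transfer the conclusion.

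For the forward direction I would first rule out the possibility that both $G$ and $\overline{G}$ are connected. In the $P_4$-sparse case, \Cref{thm:connCharSparse} leaves only that $G$ is a spider $(S,K,R)$; this is dismissed exactly as in the final theorem of the preceding subsection on $P_4$-sparse polar obstructions: a headless spider is a split graph and hence polar, while if $R\ne\varnothing$ any polar partition $(A,B)$ of $G[R]$ extends to the polar partition $(A\cup K,\,B\cup S)$ of $G$, since $K$ is a clique, $S$ is independent, and $R$ is completely adjacent to $K$ and completely nonadjacent to $S$. In the $P_4$-extendible case, \Cref{thm:connChar} leaves either an extension graph or a $G$-spider with nonempty head; \Cref{rem:extNoObs} kills the extension graphs, and the $G$-spider case is handled by the same extension argument, using the structural observation (readable off \Cref{fig:extSets}) that in every separable extension graph the midpoints induce a complete multipartite graph while the endpoints induce a cluster. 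This structural observation is the main technical input. Once it is in place we conclude that $G$ or $\overline{G}$ is disconnected; we take it to be $G$. Then \Cref{lem:charObsmin} writes $G \cong P_3 + H$ with $H$ a minimal monopolar obstruction that is polar, and the monopolar theorem stated just above in this section identifies $\overline{H}$ as one of the graphs in \Cref{fig:essentials}; complementing yields the desired $\overline{G} \cong \overline{P_3} \oplus \overline{H}$.

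For the converse, suppose $\overline{G} \cong \overline{P_3} \oplus Y$ for some $Y$ in \Cref{fig:essentials}, so that $G \cong P_3 + \overline{Y}$. The monopolar theorem gives $\overline{Y}$ as a minimal monopolar obstruction, and a routine case check on the seven graphs in \Cref{fig:essentials} confirms that each $\overline{Y}$ is in fact polar (an explicit polar partition is easily exhibited in every case). Then \Cref{lem:charObsmin} yields that $G$ is a disconnected minimal polar obstruction, and by self-complementarity so is $\overline{G}$.
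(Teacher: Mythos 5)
Your proposal is correct and follows essentially the same route as the paper: you exclude the case that both $G$ and $\overline{G}$ are connected by extending a polar partition of the spider's (or $S$-spider's) head using the fact that midpoints induce a complete multipartite graph and endpoints a cluster, and then combine \Cref{lem:charObsmin} with the monopolar characterization (\Cref{cor:sp(infty1)}, \Cref{cor:P4exCoMonop}), exactly as in \Cref{sec:main}. The only addition is your explicit routine verification, for the converse, that the complements of the seven graphs in \Cref{fig:essentials} are polar, a point the paper leaves implicit.
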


\begin{theorem}
Any hereditary property $\mathcal P$ on the class $\mathcal{G}$ has only a finite
number of minimal $\mathcal P$-obstructions.
\end{theorem}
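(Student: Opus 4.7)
The plan is to derive this statement as a direct corollary of the two WQO results already established in \Cref{sec:wqo}, namely that both the class of $P_4$-sparse graphs and the class of $P_4$-extendible graphs are WQO under the induced subgraph relation. By hypothesis, $\mathcal{G}$ is contained in one of these two classes, so it suffices to argue that being WQO is inherited by subclasses and then invoke the equivalence between WQO and the finite-obstruction property that was recorded at the beginning of \Cref{sec:wqo}.

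First I would observe the following general fact about posets: if $(M, \le)$ is a WQO and $N \subseteq M$, then $(N, \le)$ is also a WQO. This is immediate from the definition, since any infinite sequence $\{a_i\}_{i \in \mathbb{N}}$ of elements of $N$ is in particular an infinite sequence in $M$, and so it must contain an increasing pair $a_i \le a_j$ with $i < j$. Applying this to $\mathcal{G}$, which is a subclass of either $P_4$-sparse or $P_4$-extendible graphs (both WQO by the corollaries in \Cref{sec:wqo}), we conclude that $\mathcal{G}$ is itself WQO under the induced subgraph relation.

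Next I would invoke the equivalence explained right after \Cref{rem:hered}: since the induced subgraph order has no infinite descending chains, a graph class is WQO under $\le$ if and only if it contains no infinite antichain, which in turn is equivalent to the statement that every hereditary property on the class has only finitely many minimal obstructions. Combining this with the previous paragraph yields the desired conclusion for $\mathcal{G}$.

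There is no real obstacle here: closure of $\mathcal{G}$ under induced subgraphs is used only to make sense of the induced subgraph order internally to $\mathcal{G}$ (so that the minimal obstructions of a hereditary property on $\mathcal{G}$ are themselves graphs in $\mathcal{G}$), while the closure under complements and the specific structural theory of $P_4$-sparse or $P_4$-extendible graphs are not needed beyond what was already used to establish the two WQO corollaries. Thus the proof reduces to the two short observations above.
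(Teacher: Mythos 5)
Your proposal is correct and matches the paper's intended argument: the theorem is exactly the combination of the two WQO corollaries of \Cref{sec:wqo} (obtained via Damaschke's theorem), the trivial fact that a subclass of a WQO class is WQO, and the equivalence between WQO and the finite-obstruction property recorded after \Cref{rem:hered}. Your remark that closure under induced subgraphs is what makes the minimal obstructions on $\mathcal{G}$ form an antichain inside $\mathcal{G}$, while closure under complements is not needed here, is also accurate.
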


Throughout this work we showed that any $P_4$-sparse minimal obstruction for
unipolarity, monopolarity, polarity, and $(s,1)$-polarity is a cograph. In
addition, Hannnebauer \cite{hannnebauerTH'10} showed the following interesting
result that generalize its analogue for cographs, which was previously proved in
\cite{federGTP}.

\begin{theorem} [\cite{hannnebauerTH'10}]
\label{theo:P4sp(s+1)(k+1)}
Let $H$ be a $P_4$-sparse minimal $(s,k)$-polar obstruction. Then $H$ has at
most $(s+1)(k+1)$ vertices.
\end{theorem}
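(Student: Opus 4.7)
The plan is to proceed by induction on $|V_H|$, using the structural trichotomy for $P_4$-sparse graphs from \Cref{thm:connCharSparse}: every nontrivial $P_4$-sparse graph is disconnected, co-disconnected, or a spider. Since $P_4$-sparseness and the property of being a minimal $(s,k)$-polar obstruction are both preserved under complementation (the latter with the roles of $s$ and $k$ exchanged, and with thin spiders becoming thick spiders), and the bound $(s+1)(k+1)$ is symmetric in $s$ and $k$, I may restrict attention to two cases: $H$ disconnected, or $H$ a thin spider.

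For the disconnected case, write $H = H_1 + \cdots + H_t$. In any candidate $(s,k)$-polar partition $(A,B)$ of $H$, the set $A$ must induce a complete multipartite graph, so either $A$ lies inside a single component of $H$, or $A$ is an independent set (the only complete multipartite graph on more than one vertex that is disconnected). A case analysis on these two possibilities, combined with the minimality of $H$, forces each component $H_i$ to be itself a minimal $(s_i,k_i)$-polar obstruction for parameters satisfying $\sum_i (s_i+1)(k_i+1) \le (s+1)(k+1)$; the inductive hypothesis then bounds each $|V_{H_i}|$ by $(s_i+1)(k_i+1)$, and summing yields $|V_H| \le (s+1)(k+1)$. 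This step parallels the reductions already carried out in the present paper for the axis $k=1$ (compare \Cref{lem:disc(1k)obsmin,lem:kappa}), extended to the full parameter plane.

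The main obstacle is the spider case. Write $H = (S,K,R)$ with $|S| = |K| = t \ge 2$, and take $H$ thin without loss of generality. In any hypothetical $(s,k)$-polar partition $(A,B)$, the clique $K$ contributes at most $s$ vertices to $A$ while its remaining vertices must lie inside a single clique of the $k$-cluster $G[B]$; each leg in $S$ is forced either into $A$ or into the specific cluster of $G[B]$ containing its matched body vertex; and the head $R$, being completely adjacent to $K$, interacts with any vertex of $K \cap B$ through that same cluster. I would distinguish the subcases $|K \cap A| \le 1$ and $|K \cap A| \ge 2$ (in the latter, a complete-multipartite compatibility check forces every leg in $A$ to have its matched body vertex in $A$) and, in each subcase, show that either the constraints admit a satisfying assignment---contradicting that $H$ is an obstruction---or $G[R]$ already contains a minimal $(s',k')$-polar obstruction with $(s'+1)(k'+1) + 2t \le (s+1)(k+1)$, so that the inductive bound on $|R|$ closes the argument.

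The hardest step will be calibrating this spider reduction precisely: the ``budget'' consumed by the $2t$ vertices of $S \cup K$ must align exactly with the reduction of polar parameters for which $R$ is forced to be an obstruction. I expect this to require a careful bookkeeping of how many parts of $A$ and how many clusters of $B$ are unavoidably used up by the body--leg interaction, and the analysis may split further according to whether the matched vertex of an $A$-leg lies in $A$ or $B$. Once this calibration is achieved, the three cases combine to give the uniform bound $|V_H| \le (s+1)(k+1)$.
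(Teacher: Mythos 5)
There is no in-paper proof to compare against here: the paper quotes this bound from Hannnebauer's thesis \cite{hannnebauerTH'10} without proof, so your argument has to stand on its own, and as written it has two genuine gaps. The first is in your disconnected case: the claim that minimality of $H$ forces each component $H_i$ to be a \emph{minimal} $(s_i,k_i)$-polar obstruction (with $\sum_i (s_i+1)(k_i+1)\le (s+1)(k+1)$) is false. The paper's own \Cref{thm:allDisc(s1)obsmin} (via \Cref{lem:H+K2}) gives a counterexample: $K_2+(2K_1\oplus K_s)$ is a disconnected minimal $(s,1)$-polar obstruction, yet its component $2K_1\oplus K_s$ is not a minimal $(s',k')$-polar obstruction for \emph{any} pair $(s',k')$ --- whenever it fails to be $(s',k')$-polar (namely $k'=0$, $s'\le s$, or $s'=0$, $k'\le 1$), it properly contains a smaller obstruction ($K_{s'+1}$, respectively $2K_1$ or $K_1$). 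So the disconnected reduction cannot be ``each component is a minimal obstruction''; one needs a different bookkeeping, e.g.\ tracking, for each component and each number $j\le s$ of multipartite parts allotted to it, the least $k'$ with that component $(j,k')$-polar, and arguing about how an optimal allocation changes under vertex deletion (this is essentially the extra machinery in the cograph proof of \cite{federGTP} that your sketch skips).

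The second and larger gap is the spider case, which you acknowledge is the crux but do not actually carry out: the dichotomy ``either the constraints admit a satisfying assignment or $G[R]$ contains a minimal $(s',k')$-polar obstruction with $(s'+1)(k'+1)+2t\le (s+1)(k+1)$'' is exactly the statement that needs proof, and the ``calibration'' you defer is where all the difficulty lies. Note that the paper poses as an open question (\Cref{prob:obsminSparseRcog}) whether $P_4$-sparse minimal $(s,k)$-polar obstructions can be spiders at all; the easy eliminations available for $k=1$ (\Cref{prop:Sp(1k)}) and for full polarity do not extend routinely to general $(s,k)$, since for a spider $(S,K,R)$ the clique $K$ may be split between the multipartite side and several clusters, and legs interact with both budgets. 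Without an actual argument bounding $|S\cup K|$ and the parameters forced on $G[R]$ in this case, the induction does not close, so the proposal is a plausible plan but not a proof.
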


The observations above make us propose the following questions.

\begin{problem}
\label{prob:obsminSparseRcog}
For any positive integers $s$ and $k$, is every $P_4$-sparse minimal
$(s,k)$-polar obstruction a cograph?
\end{problem}

\begin{problem}
  Can we establish an $O(sk)$ upper bound for the order of the
  $P_4$-extendible minimal $(s,k)$-polar obstructions?
\end{problem}

It was independently shown in \cite{bravoDAM159} and \cite{hannnebauerTH'10}
that any $P_4$-sparse minimal obstruction for $(k,\ell)$-coloring is a cograph
too, so we propose the following problem generalizing
\Cref{prob:obsminSparseRcog}.

\begin{problem}
Which hereditary properties $\mathcal{P}$ satisfy that every $P_4$-sparse
minimal $\mathcal{P}$-obstruction is a cograph?
\end{problem}

With the help of an interesting graph operation called partial complementation,
Hell, Hern\'andez-Cruz and Linhares-Sales \cite{hellDAM261} gave the complete
list of cograph minimal $(2,2)$-polar obstructions. In a work in progress, we
provide analogous results for $P_4$-sparse and $P_4$-extendible graphs, as well
as efficient algorithms for finding maximal unipolar, monopolar, and polar
subgraphs on these families. Such algorithms are based on the unique tree
representations for the mentioned classes and they generalize those given in
\cite{ekimDAM156b} for cographs.

As a future line of work, we propose to extend the results in this paper to more
general graph classes having few induced $P_4$'s, for instance, $P_4$-tidy
graphs or extended $P_4$-laden graphs, any of which contains both $P_4$-sparse
and $P_4$-extendible graphs. Another line of work is to characterize some other
hereditary properties on cograph superclasses (see Figure \ref{fig:relations})
by their sets of minimal obstructions. For example, it remains unknown whether
the $P_4$-extendible minimal $(k,\ell)$-obstructions admit a simple structural
characterization as their analogous in cographs and $P_4$-sparse graphs.


\end{document}